\documentclass[a4paper,twoside,11pt]{article}

\usepackage{a4wide, fancyhdr, amsmath, amssymb, mathtools, yfonts}
\usepackage{mathrsfs}
\usepackage{graphicx}
\usepackage{tikz}
\usepackage[all]{xy}
\usepackage[utf8]{inputenc}
\usepackage{amsthm}
\usepackage[english]{babel}
\usepackage{chngcntr}
\usepackage{ifthen}
\usepackage{calc}
\usepackage{hyperref}
\usepackage{authblk}
\numberwithin{equation}{section}


\setlength\headheight{20pt}
\addtolength\topmargin{-10pt}
\addtolength\footskip{20pt}

\newcommand{\Z}{\mathbb{Z}}
\newcommand{\Q}{\mathbb{Q}}

\newcommand\FF{\mathbb{F}}

\newcommand\Gal{\mathrm{Gal}}

\newtheorem{lemma}{Lemma}[section]
\newtheorem{theorem}[lemma]{Theorem}
\newtheorem{prop}[lemma]{Proposition}
\newtheorem{corollary}[lemma]{Corollary}
\newtheorem{mydef}[lemma]{Definition}

\newtheorem{remark}[lemma]{Remark}

\title{\vspace{-\baselineskip}\sffamily\bfseries Higher R\'edei reciprocity and integral points on conics}
\author[1]{Peter Koymans\thanks{Department of Mathematics, Ann Arbor, MI 48109, USA, koymans@umich.edu}}
\author[2]{Carlo Pagano\thanks{Department of Mathematics and Statistics, Montreal, Quebec H3G 1M8, Canada, carlein90@gmail.com}}
\affil[1]{University of Michigan}
\affil[2]{Concordia University}
\date{\today}

\begin{document}
\maketitle

\begin{abstract}
Fix an integer $\ell$ such that $|\ell|$ is a prime $3$ modulo $4$. Let $d > 0$ be a squarefree integer and let $N_d(x, y)$ be the principal binary quadratic form of $\Q(\sqrt{d})$. Building on a breakthrough of Smith \cite{Smith}, we give an asymptotic formula for the solubility of $N_d(x, y) = \ell$ in integers $x$ and $y$ as $d$ varies among squarefree integers divisible by $\ell$. 

As a corollary we give, in case $\ell > 0$, an asymptotic formula for the event that the Hasse Unit Index of the field $\Q(\sqrt{-\ell}, \sqrt{d})$ is $2$ as $d$ varies over all positive squarefree integers. Our principal new tool is a generalization of a classical reciprocity law due to R\'edei \cite{Redei}.
\end{abstract}

\section{Introduction}
\label{sIntro}
The study of integral points on conics goes back to at least the ancient Greeks. Much later significant progress was made by the Indian mathematicians Brahmagupta and Bhaskara II around the years 650 and 1150 respectively. Brahmagupta was able to solve the Pell equation
\begin{equation}
\label{ePell}
x^2 - dy^2 = 1 \text{ in } x, y \in \mathbb{Z}
\end{equation}
in special cases, while Bhaskara II was the first to give a method to solve the Pell equation in full generality.

Let $\ell$ be an integer such that $|\ell|$ is a prime $3$ modulo $4$. For a squarefree integer $d > 0$, we define
\[
N_d(x, y) = 
\left\{
	\begin{array}{ll}
		x^2 + xy - \frac{d - 1}{4}y^2  & \mbox{if } d \equiv 1 \bmod 4 \\
		x^2 - dy^2 & \mbox{otherwise,} 
	\end{array}
\right.
\]
which is the principal binary quadratic form of $\Q(\sqrt{d})$. In this paper we look at the equation
\begin{equation}
\label{eGenPell}
N_d(x, y) = \ell \text{ in } x, y \in \mathbb{Z}
\end{equation}
with $d$ squarefree. Unlike equation (\ref{ePell}) it is not always possible to find $x, y \in \mathbb{Z}$ that satisfy the above equation. We denote by $H(K)$ the narrow Hilbert class field of a number field $K$, which is the maximal abelian extension of $K$ that is unramified at all finite places, while the ordinary Hilbert class field must also be unramified at the infinite places. Then equation (\ref{eGenPell}) is soluble if and only if there is an ideal in $\Q(\sqrt{d})$ with norm $\ell$ and trivial Artin symbol in the narrow Hilbert class field of $\Q(\sqrt{d})$, see Lemma \ref{lCrit}. If we take $\ell = -1$ in equation (\ref{eGenPell}), then we get the classical negative Pell equation.

Given $d$, there exists an algorithm to compute the Hilbert class field of $\Q(\sqrt{d})$ both in the narrow and ordinary sense. Hence it is possible to decide given $\ell$ and $d$ whether equation (\ref{eGenPell}) is soluble. In fact, for a fixed squarefree integer $d$, an appeal to the Chebotarev Density Theorem gives an asymptotic for the number of primes $\ell$ such that equation (\ref{eGenPell}) is soluble.

In this paper we ask the opposite question. Instead of fixing $d$, we shall treat $\ell$ as fixed and vary $d$. Equivalently, we ask how often there is some ideal with norm $\ell$ and trivial Artin symbol in $H(\Q(\sqrt{d}))$ as $d$ varies. Unfortunately, the distribution of the Hilbert class field as $d$ varies is not well understood at the moment. 

In fact, the only proven results for the distribution of $\text{Cl}(K)$ with $K$ imaginary quadratic are Davenport--Heilbronn \cite{DH} on $3$-torsion, Fouvry--Kl\"uners \cite{FK1, FK2} on $4$-torsion, based on earlier work of Heath-Brown \cite{HB} on $2$-Selmer groups, and Smith \cite{Smith8, Smith} on respectively $8$-torsion and $2^\infty$-torsion. Heuristically, we understand the situation much better due to the seminal work of Cohen and Lenstra \cite{CL}, which was later extended by Gerth \cite{Gerth}.

Therefore we restrict our attention to only those squarefree integers $d$ that are divisible by $\ell$. In this case we know that $\ell$ ramifies in $\Q(\sqrt{d})$. Gauss genus theory states that the ramified primes generate $\text{Cl}(\Q(\sqrt{d}))[2]$, and that there is precisely one non-trivial relation between them. Here $\text{Cl}$ denotes the narrow class group. In particular we see that $\mathfrak{l} \in \text{Cl}(\Q(\sqrt{d}))[2]$, where $\mathfrak{l}$ is the unique ideal above $\ell$. In this case equation (\ref{eGenPell}) is soluble if and only if $\mathfrak{l}$ is the relation in $\text{Cl}(\Q(\sqrt{d}))[2]$. Hence we need to study the distribution of $\text{Cl}(\Q(\sqrt{d}))[2^\infty]$, and this naturally brings the methods of Smith \cite{Smith} into play.

Note that for equation (\ref{eGenPell}) to be soluble, it is necessary that it is soluble over $\Q$. Or formulated differently, $\mathfrak{l}$ must split in the genus field of $\Q(\sqrt{d})$, which is by definition the maximal subextension of $H(\Q(\sqrt{d}))$ that is abelian over $\Q$. By the Hasse--Minkowski theorem it is easy to determine necessary and sufficient conditions on $d$ for the solubility of equation (\ref{eGenPell}) over $\Q$. With this in mind we can state our first main theorem after introducing the following quantities
\begin{align}
\label{eGamma}
\eta_k := \prod_{j = 1}^k (1 - 2^{-j}) \text{ with } k \in \Z_{\geq 0} \cup \{\infty\}, \quad \gamma := \sum_{j = 0}^\infty \frac{2^{-j^2} \eta_\infty \eta_j^{-2}}{2^{j + 1} - 1}
\end{align}
and with $R \in \{\Z, \Q\}$ and $\ell$ any integer
\[
S_{R, X, \ell} := \{0 < d < X : d \textup{ squarefree}, \ \ell \mid d, N_d(x, y) = \ell \textup{ is soluble with } x, y \in R\}.
\]

\begin{theorem}
\label{tMain}
Let $\ell$ be an integer such that $|\ell|$ is a prime $3$ modulo $4$. Then we have 
\[
\lim_{X \rightarrow \infty} \frac{|S_{\Z, X, \ell}|}{|S_{\Q, X, \ell}|} = \gamma = \frac{1}{2}.
\]
\end{theorem}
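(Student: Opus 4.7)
The starting point is to translate Theorem~\ref{tMain} into a question about the $2$-part of the narrow class group. For $d \in S_{\Q, X, l}$ we have by Hasse--Minkowski that $\mathfrak{l}$ splits in the genus field of $\Q(\sqrt{d})$, equivalently $[\mathfrak{l}] \in 2\,\text{Cl}(\Q(\sqrt{d}))$; on the other hand, $d \in S_{\Z, X, l}$ is equivalent to $[\mathfrak{l}] = 0$ in $\text{Cl}(\Q(\sqrt{d}))$, which is in turn the conjunction of the events $[\mathfrak{l}] \in 2^k\,\text{Cl}(\Q(\sqrt{d}))$ over all $k \geq 1$. So the theorem amounts to computing the asymptotic conditional probability that $[\mathfrak{l}]$ is trivial in $\text{Cl}(\Q(\sqrt{d}))$, given the base condition $[\mathfrak{l}] \in 2\,\text{Cl}(\Q(\sqrt{d}))$ that defines $S_{\Q, X, l}$.

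I would attack this tower of events inductively. At each step, the passage from ``$[\mathfrak{l}] \in 2^k\,\text{Cl}$'' to ``$[\mathfrak{l}] \in 2^{k+1}\,\text{Cl}$'' is encoded by a \emph{higher R\'edei symbol} attached to $\mathfrak{l}$ together with a $k$-tuple of prime divisors of $d$, realised as a Frobenius element in a tower of $2^k$-extensions of $\Q$ that govern the relevant $2^k$-divisibility of $[\mathfrak{l}]$. These symbols generalise the classical R\'edei triple symbol (recovered at $k = 2$), and the higher reciprocity law announced in the abstract supplies the multilinear symmetry relations between them; this is the algebraic mechanism that makes an analytic attack feasible, exactly as classical reciprocity does at the base of Smith's argument.

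The heart of the proof is then an equidistribution statement modelled on Smith \cite{Smith}: after averaging over squarefree $d < X$ with $l \mid d$, the conditional law of the $(k+1)$-th level symbol given the first $k$ levels is asymptotically independent and matches the Gerth refinement of the Cohen--Lenstra prediction for the distinguished class $[\mathfrak{l}]$. I would imitate Smith's strategy at every level of the tower: cut the prime factorisation of $d$ into a ``frozen'' part and a small number of ``movable'' primes, use higher R\'edei reciprocity to rewrite the R\'edei symbols depending on the movable primes as Artin symbols in fixed $2$-extensions, and control the resulting character sums over primes in arithmetic progressions via Siegel--Walfisz, the large sieve, and bilinear estimates. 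The main obstacle is precisely this step: Smith's original argument relies on quadratic reciprocity at its base level to produce the key cancellation, and here the cancellation at higher levels must be produced by the new higher R\'edei reciprocity law, so pushing the swapping arguments through this deeper algebraic input is where the real work lies.

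Finally, summing the level-by-level equidistribution over all possible $2$-ranks $j$ of $\text{Cl}(\Q(\sqrt{d}))[2]$ (modulo the genus-theoretic relation forced by $l \mid d$) and unfolding the Cohen--Lenstra formula for the probability that a uniformly distributed element of $2G$ equals $0$ in $G$, one obtains the closed form $\gamma = \sum_{j \geq 0} \frac{2^{-j^2}\eta_\infty \eta_j^{-2}}{2^{j+1} - 1}$ asserted in the theorem.
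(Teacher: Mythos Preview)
Your proposal is correct and follows the paper's approach: reduce to $[\mathfrak{l}]=0$ in the narrow $2$-class group (Section~\ref{sAlg}), establish the level-by-level Markov behaviour (Theorem~\ref{tHeuristic}) via Smith's box/cube method with the higher R\'edei reciprocity (Theorem~\ref{Redei reciprocity}) supplying the Frobenius swap that makes the $l$-row of the Artin pairing tractable, and then extract $\gamma$ from the resulting recursion (Appendix~\ref{aSte}). Two minor slips: the final sum is over the $4$-rank $j$ (the $2$-rank is fixed by genus theory), and the analytic endgame is Chebotarev plus Smith's additive-system combinatorics (Proposition~\ref{pdF}) rather than large sieve or bilinear estimates directly.
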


Our main equidistribution statement is Theorem \ref{tHeuristic}. We show in Appendix \ref{aSte} how this equidistribution statement implies that the above limit equals $\gamma$. This argument bears strong similarities with an argument due to Stevenhagen \cite{Stevenhagen}. The rather surprising identity $\gamma = 1/2$ is entirely combinatorial, and is proven in Appendix \ref{aGamma}.

We remark that $\gamma$ has a very natural interpretation. Informally speaking, the quantity $2^{-j^2} \eta_\infty \eta_j^{-2}$ represents the probability that the $4$--rank of a random element in the set $S_{\Q, X, \ell}$ is equal to $j$. This will be made precise in Theorem \ref{t4rank}. Note that if the $4$--rank of $\text{Cl}(K)$ is $j$, we have a natural generating set, coming from Gauss genus theory, of size $j + 1$ for $2 \text{Cl}(K)[4]$. Furthermore, Gauss genus theory says that there is exactly one relation between the generators. Hence $1/(2^{j + 1} - 1)$ represents the probability that the ideal above $\ell$ is the relation, if one thinks of the relation as being ``random''. This is very much in spirit of Stevenhagen's conjecture \cite{Stevenhagen} on the solubility of the negative Pell equation. Although we shall not prove it, our techniques readily give the distribution of $2\text{Cl}(\Q(\sqrt{d}))[2^\infty]$ as $d$ varies in $S_{\Q, \infty, \ell}$.

We obtain the distribution of $2\text{Cl}(\Q(\sqrt{d}))[4]$ inside $S_{\Q, \infty, \ell}$ in Theorem \ref{t4rank}, which is already a new and exciting result. The $4$-rank in the family $S_{\Q, \infty, \ell}$ is elevated thanks to the solubility of equation (\ref{eGenPell}). This leads to the somewhat surprising conclusion that the distribution of $2\text{Cl}(\Q(\sqrt{d}))[4]$ inside our family is the same as the $4$-rank distribution of imaginary quadratic fields. An important step in this proof is that we need to both adapt and make effective the Markov chain analysis of Gerth \cite{Gerth}, which we achieve in our companion paper \cite{KP3}.

It is worthwhile to compare with the case $\ell = -1$ at this point, which is precisely Stevenhagen's conjecture. This conjecture was recently proven by the authors \cite{KPPell}. In this case the limit equals 
\[
1 - \prod_{j \text{ odd}} (1 - 2^{-j}) \approx 0.581,
\]
and it was already observed by Stevenhagen that this number is irrational. So despite some similarities with the case $\ell = -1$, there is at least one remarkable difference in the rationality of the limiting value.

We will now give a brief comparison between the material in \cite{KPPell} and this paper. Two key results in this paper, Theorem \ref{tRedei} (originally proven in this paper) and Theorem \ref{tProfitable}, are now superseded by respectively \cite[Theorem 3.2]{KPPell} and \cite[Theorem 4.6]{KPPell}. It is for this reason that we have opted to cite these strictly stronger versions. This makes the key new algebraic result in this paper Theorem \ref{tReflectionEll}, where we make essential use of the R\'edei reciprocity law.

By classical techniques one can give an asymptotic formula for $|S_{\Q, X, \ell}|$; this requires only slight modifications of \cite[Exercise 21, Section 6.2]{MV}, see also \cite[Section 3]{Park}. Indeed, we have
\[
|S_{\Q, X, \ell}| \sim \frac{1}{\sqrt{\pi}} \cdot \frac{C(\ell) \cdot \delta(\ell)}{|\ell|} \cdot \frac{X}{\sqrt{\log X}},
\]
where
\[
C(\ell) = \lim_{s \rightarrow 1} \left(\sqrt{s - 1} \cdot \prod_{\substack{p \text{ odd} \\ (\ell/p) = 1}} \left(1 + \frac{1}{p^s}\right)\right), \quad
\delta(\ell)
=
\left\{
\begin{array}{ll}
3/2  & \mbox{if } \ell \equiv 1 \bmod 8 \\
3/4 & \mbox{if } \ell \equiv 3 \bmod 8 \\
1 & \mbox{if } \ell \equiv 5 \bmod 8 \\
3/4 & \mbox{if } \ell \equiv 7 \bmod 8.
\end{array}
\right.
\]
This yields the following corollary of Theorem \ref{tMain}.

\begin{corollary}
Take $\ell$ to be an integer such that $|\ell|$ is a prime $3$ modulo $4$. Then
\[
|S_{\Z, X, \ell}| \sim \frac{\gamma}{\sqrt{\pi}} \cdot \frac{C(\ell) \cdot \delta(\ell)}{|\ell|} \cdot \frac{X}{\sqrt{\log X}}.
\]
\end{corollary}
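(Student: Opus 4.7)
The plan is essentially to combine Theorem \ref{tMain} with the asymptotic formula for $|S_{\Q, X, l}|$ stated immediately before the corollary. Writing
\[
|S_{\Z, X, l}| \;=\; \frac{|S_{\Z, X, l}|}{|S_{\Q, X, l}|} \cdot |S_{\Q, X, l}|,
\]
Theorem \ref{tMain} provides the convergence of the first factor to $\gamma$, while the displayed formula supplies the asymptotic $\frac{1}{\sqrt\pi}\cdot \frac{C(l)\,\delta(l)}{|l|}\cdot \frac{X}{\sqrt{\log X}}$ for the second. Multiplying gives the claimed equivalent, so the only content of the corollary beyond Theorem \ref{tMain} is the verification of the $|S_{\Q, X, l}|$ asymptotic, which is pointed to in the references \cite{MV, Park}.

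For that verification I would first isolate the local conditions at every prime. By Hasse--Minkowski, solubility of $N_d(x,y) = l$ over $\Q$ is equivalent to local solubility at every place. The archimedean condition is automatic since $d > 0$ and $l$ is allowed to have either sign via the form $N_d$. At odd primes $p \neq |l|$, local solubility is equivalent to $p$ not being inert in $\Q(\sqrt d)$, i.e.\ to a Jacobi symbol condition $\leg{d}{p} \in \{0,1\}$. At $p = |l|$ (which divides $d$ by assumption) and at $p = 2$, local solubility becomes a congruence condition on $d$ modulo a fixed power of $2$ and modulo $l$; these conditions precisely produce the factor $\delta(l)$ with the stated case analysis according to $l \bmod 8$.

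With the local conditions identified, the counting function $|S_{\Q, X, l}|$ becomes the number of squarefree positive integers $d \leq X$ that are divisible by $|l|$ and have $\leg{l}{p} \ne -1$ for every odd prime $p \mid d$ with $p \neq |l|$, subject to finitely many congruence classes mod $8$ and mod $l$. This is a classical Landau/Selberg--Delange type count: one writes the generating Dirichlet series as a product of a factor involving $(1 + \leg{l}{p} p^{-s})$-type terms, from which the singular behavior of the form $\sim X(\log X)^{-1/2}$ emerges with leading constant $\frac{1}{\sqrt\pi} \, C(l)$, where $C(l)$ is precisely the Dirichlet-density constant defined in the excerpt. The congruence conditions contribute the factor $\delta(l)/|l|$, and summing over the finitely many admissible classes assembles the full constant. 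This is exactly the indicated modification of \cite[Exercise 21, Section 6.2]{MV}, following \cite[Section 3]{Park}.

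The only non-routine ingredient is Theorem \ref{tMain}, which is the main result of the paper and whose proof is the real content. The derivation of the $|S_{\Q, X, l}|$ asymptotic is entirely classical analytic number theory and presents no genuine obstacle; after it is in hand, the corollary is truly a one-line multiplication of asymptotics.
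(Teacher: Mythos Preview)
Your proposal is correct and is exactly the paper's approach: the paper does not even write a separate proof of this corollary, it simply records the asymptotic for $|S_{\Q, X, l}|$ (citing \cite{MV, Park}) and says ``This yields the following corollary of Theorem \ref{tMain}.'' Your factorisation $|S_{\Z, X, l}| = \frac{|S_{\Z, X, l}|}{|S_{\Q, X, l}|}\cdot |S_{\Q, X, l}|$ and appeal to Theorem \ref{tMain} for the first factor is precisely what is intended.
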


Earlier work was done by Milovic \cite{Milovic}, who showed that $S_{\Z, X, \pm 2}$ has the same order of magnitude as $S_{\Q, X, \pm 2}$. It is plausible that our methods can be adapted to the case $\ell = \pm 2$ as well.

An immediate application is the following result. For a biquadratic field $\Q(\sqrt{a}, \sqrt{b})$, the Hasse Unit Index is defined to be
\[
H_{a, b} := \left[\mathcal{O}_{\Q(\sqrt{a}, \sqrt{b})}^\ast : \mathcal{O}_{\Q(\sqrt{a})}^\ast \mathcal{O}_{\Q(\sqrt{b})}^\ast \mathcal{O}_{\Q(\sqrt{ab})}^\ast\right].
\]
If the biquadratic field is totally complex, then it is known that $H_{a, b} \in \{1, 2\}$, see for example the work of Lemmermeyer \cite{Lemmermeyer}. Our next theorem determines the distribution of the Hasse Unit Index in many cases.

\begin{corollary}
\label{cHasse}
Let $\ell > 3$ be a prime $3$ modulo $4$. Then we have
\begin{multline*}
|\{0 < d < X \textup{ squarefree} : H_{-\ell, d} = 2\}| \sim |S_{\Z, X, \ell}| + |S_{\Z, X, -\ell}| \sim \\ 
\left(\frac{\gamma}{\sqrt{\pi}} \cdot \frac{C(\ell) \cdot \delta(\ell)}{\ell} + 
\frac{\gamma}{\sqrt{\pi}} \cdot \frac{C(-\ell) \cdot \delta(-\ell)}{\ell}\right) \cdot \frac{X}{\sqrt{\log X}}.
\end{multline*}
\end{corollary}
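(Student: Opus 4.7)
The second asymptotic is immediate by applying Theorem~\ref{tMain} separately to $l$ and to $-l$ and using the classical asymptotic for $|S_{\Q, X, \pm l}|$ quoted just above the corollary. Thus I focus on the first asymptotic, which I plan to prove in the stronger pointwise form
\[
|\{0 < d < X \text{ squarefree} : H_{-l,d} = 2\}| = |S_{\Z, X, l}| + |S_{\Z, X, -l}| + O(1).
\]
The underlying claim is that, apart from finitely many exceptional $d$, the event $H_{-l,d}=2$ is equivalent to the solubility over $\Z$ of $N_d(x,y)=l$ or $N_d(x,y)=-l$, and that these two sign cases are mutually exclusive.

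Set $K=\Q(\sqrt{-l},\sqrt{d})$ and let $\varepsilon_d$ denote the fundamental unit of $\Q(\sqrt{d})$. Since $l>3$ is prime, none of the three quadratic subfields of $K$ can coincide with $\Q(i)$ or $\Q(\sqrt{-3})$ unless $d\in\{l,3l\}$; outside these two exceptional values one has $W_K=\{\pm 1\}$ and the units of the three quadratic subfields of $K$ together generate exactly $\langle -1,\varepsilon_d\rangle\subset\OO_K^{\ast}$. The field $K$ is a biquadratic CM field of unit rank $1$, so $\OO_K^{\ast}=\{\pm 1\}\times\langle\eta\rangle$ for some fundamental unit $\eta$; the classical result that $H_{-l,d}\in\{1,2\}$ in the totally complex case (Lemmermeyer~\cite{Lemmermeyer}) yields that $H_{-l,d}=2$ if and only if $\varepsilon_d=\pm\eta^2$, equivalently $\sqrt{\varepsilon_d}\in K$ or $\sqrt{-\varepsilon_d}\in K$. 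Applying Kummer theory to the quadratic extension $K=\Q(\sqrt{d})(\sqrt{-l})$, and using that neither $\varepsilon_d$ nor $-\varepsilon_d$ is a square in the real field $\Q(\sqrt{d})$, this condition translates to
\[
-l\varepsilon_d\in\bigl(\Q(\sqrt{d})^{\ast}\bigr)^{2} \quad\text{or}\quad l\varepsilon_d\in\bigl(\Q(\sqrt{d})^{\ast}\bigr)^{2}.
\]

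Writing the relevant equation as $\mp l\varepsilon_d=\beta^2$, the polynomial $X^2\pm l\varepsilon_d$ has coefficients in $\OO_{\Q(\sqrt{d})}$, so $\beta\in\OO_{\Q(\sqrt{d})}$; the ideal equality $(\beta)^2=(l)$ then forces $l$ to ramify in $\Q(\sqrt{d})$, hence $l\mid d$. Combined with $l\equiv 3\pmod 4$ this forces $N(\varepsilon_d)=+1$, so $N(\beta)=\pm l$ and $\beta$ provides a solution to $N_d(x,y)=\pm l$ over $\Z$. Conversely, given $\beta\in\OO_{\Q(\sqrt{d})}$ with $N(\beta)=\pm l$, one has $(\beta)=\mathfrak{l}$ (the unique prime above $l$), so $u:=\beta/\bar\beta\in\OO_{\Q(\sqrt d)}^{\ast}$ and $\beta^2=N(\beta)\cdot u\in\{\pm l\cdot(\pm\varepsilon_d^k)\}$; since $\pm l$ is not a square in $\Q(\sqrt{d})$ for $d\ne l$, the exponent $k$ must be odd and hence $\pm l\varepsilon_d$ is a square in $\Q(\sqrt{d})$, forcing $H_{-l,d}=2$. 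Finally, the disjointness of $S_{\Z,X,l}$ and $S_{\Z,X,-l}$ is automatic: a pair of solutions to $N_d(x,y)=l$ and $N_d(x,y)=-l$ would give a unit of norm $-1$ in $\Q(\sqrt{d})$, contradicting $N(\varepsilon_d)=+1$. Combining these observations with Theorem~\ref{tMain} yields the corollary. The main obstacle is the unit-index bookkeeping of the second paragraph: once the equivalence $H_{-l,d}=2\Leftrightarrow\sqrt{\pm\varepsilon_d}\in K$ is secured via the classical theory of biquadratic CM fields, the remainder is Kummer theory and routine ideal manipulations.
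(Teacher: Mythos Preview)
Your argument is correct and follows essentially the same route as the paper. The paper isolates the algebraic content as a separate lemma in Section~\ref{sAlg}: for $l>3$ odd squarefree and $d\notin\{l,3l\}$, one has $H_{-l,d}=2$ if and only if $l\mid d$ and $N_d(x,y)=\pm l$ is soluble over $\Z$. Its proof invokes Kubota \cite[Satz~2]{Kubota} for the equivalence $H_{-l,d}=2\Leftrightarrow \pm\varepsilon_d\in K^{\ast 2}$, then applies Kummer theory exactly as you do. Your second paragraph instead derives this equivalence directly from the unit-rank structure of the CM biquadratic field, which is a perfectly valid alternative to citing Kubota. You also make explicit the disjointness of $S_{\Z,X,l}$ and $S_{\Z,X,-l}$ via $N(\varepsilon_d)=+1$, a point the paper leaves to the reader; this is needed to pass from the union to the sum in the asymptotic.
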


From a more geometric perspective, Theorem \ref{tMain} counts how often there exists an integral point in a family of conics. As such, it is natural to view this result from the perspective of the integral Brauer--Manin obstruction. The seminal work \cite{CTF} was the first to systematically study the integral Brauer--Manin obstruction.

Theorem \ref{tMain} critically relies on a generalization of a reciprocity law due to R\'edei \cite{Redei}. This generalization is proven in Section \ref{sRed}. An extensive treatment of the classical R\'edei reciprocity law can be found in Corsman \cite{Corsman}, and was one of the main ingredients in Smith's work on $4$-Selmer groups and $8$-torsion of class groups \cite{Smith8}. Corsman's and Smith's formulations of the R\'edei reciprocity law have a minor flaw, which was corrected by Stevenhagen \cite{Redei-Stevenhagen}.

We will now roughly explain how we make use of our new reciprocity law. Following Smith's method, we need to prove equidistribution of
\[
\text{Frob}_{K_{x_1, \dots, x_m, y}/\Q}(\ell)
\]
as we vary $y$, where $K_{x_1, \dots, x_m, y}$ is a completely explicit field depending only on $x_1, \dots, x_m$ and $y$. Our reciprocity law implies that under suitable conditions
\[
\text{Frob}_{K_{x_1, \dots, x_m, y}/\Q}(\ell) = \text{Frob}_{K_{x_1, \dots, x_m, \ell}/\Q}(y).
\]
This allows us to apply the Chebotarev Density Theorem to obtain the desired equidistribution. In the case $m = 1$, the fields $K_{x_1, y}$ are constructed by R\'edei, and one recovers the R\'edei reciprocity law. In the case $m = 2$, the field $K_{x_1, x_2, y}$ first appears in Amano \cite{Amano} for special values of $x_1$, $x_2$ and $y$, while the fields $K_{x_1, \dots, x_m, y}$ are constructed in full generality by Smith \cite{Smith}. 

In the language of Smith, these fields are the field of definition of certain maps from $G_\Q$ to $\FF_2$ that Smith calls $\phi_{x_1, \dots, x_m, y}$ or simply $\phi_{\bar{x}}$. The field of definition is an unramified multiquadratic extension of a multiquadratic extension of $\Q$. As such, they are intimately related to the $2$-torsion of the class groups of multiquadratic fields. This connection is explored in recent work of the authors \cite{KP2}.

We finish the introduction by mentioning some other important results related to class groups. A lot of attention has recently be given to providing non-trivial upper bounds for $\text{Cl}(K)[\ell]$ for a fixed prime $\ell$. This was initiated by Pierce \cite{Pierce, Pierce2} for $\ell = 3$ and continued by Ellenberg and Venkatesh \cite{EV}, Ellenberg, Pierce and Wood \cite{EPW}, Frei and Widmer \cite{FW}, Pierce, Turnage-Butterbaugh and Wood \cite{PTW}.

Instead of studying class groups of quadratic extensions of $\Q$, one can study the distribution of class groups in the family of degree $\ell$ cyclic extensions of $\Q$. This was explored by Gerth \cite{Gerth2} and Klys \cite{Klys}, whose work was later generalized by the authors \cite{KP} using the Smith method \cite{Smith}. It is natural to wonder if the methods in this paper can also be used to study norm forms coming from degree $\ell$ cyclic extensions.

\subsection*{Acknowledgements}
We are most grateful to Alexander Smith for explaining his work to us on several occasions. We are also deeply indebted to Adam Morgan for pointing out to us that $\gamma = 1/2$. Peter Stevenhagen kindly explained his proof of R\'edei reciprocity to us, which inspired us to prove a more general version of the R\'edei reciprocity law. We thank Vladimir Mitankin for showing us a useful reference and we thank Stephanie Chan and Djordjo Milovic for our many insightful conversations about negative Pell. Both authors are grateful to the Max Planck Institute for Mathematics in Bonn for its hospitality and financial support.

\section{Algebraic criteria}
\label{sAlg}
In this section we collect the algebraic lemmas that link our theorems to questions about the narrow class group. These lemmas are valid for arbitrary non-zero integers $\ell$, and we shall only later restrict to $\ell$ with $|\ell| \equiv 3 \bmod 4$ a prime. For a non-zero integer $\ell$, we define $\text{sign}(\ell) = 0$ if $\ell > 0$ and $\text{sign}(\ell) = 1$ if $\ell < 0$.

\begin{lemma}
Let $\ell$ be a non-zero integer and let $d > 0$ be a squarefree integer. Then there are $x, y \in \Z$ with $N_d(x, y) = \ell$ if and only if there is an integral ideal $I$ of $\mathcal{O}_{\Q(\sqrt{d})}$ with norm $|\ell|$ such that $I \cdot (\sqrt{d})^{\textup{sign}(\ell)}$ has trivial Artin symbol in $H(\Q(\sqrt{d}))$.
\end{lemma}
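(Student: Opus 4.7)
The plan is to translate the solubility of $N_d(x,y) = l$ in integers into the existence of an algebraic integer $\beta \in \OO_K$ of norm $l$, where $K = \Q(\sqrt{d})$, and then read off what this means at the level of the narrow class group via class field theory. The key identity is that $N_d(x,y) = N_{K/\Q}(x + y\omega)$ with $\omega = (1+\sqrt{d})/2$ if $d \equiv 1 \bmod 4$ and $\omega = \sqrt{d}$ otherwise, so solubility is equivalent to the existence of $\beta \in \OO_K$ with $N_{K/\Q}(\beta) = l$. On the ideal side, class field theory identifies $\Gal(H(K)/K)$ with $\Cl^+(K)$ via the Artin map, so an integral ideal $I \subset \OO_K$ has trivial Artin symbol in $H(K)$ if and only if $I = (\gamma)$ for some totally positive $\gamma \in \OO_K$.

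For the forward direction, given $\beta \in \OO_K$ with $N_{K/\Q}(\beta) = l$, set $I := (\beta)$, which has norm $|l|$. If $l > 0$, then $\beta$ and $\bar\beta$ have the same sign, so by replacing $\beta$ with $-\beta$ if necessary we may assume $\beta$ is totally positive; hence $I$ itself has trivial Artin symbol, as required (here $\mathrm{sign}(l) = 0$). If $l < 0$, then $\beta$ and $\bar\beta$ have opposite signs. Up to a sign change, assume $\beta > 0$ and $\bar\beta < 0$. Then $\beta\sqrt{d}$ has sign $>0$ in the embedding sending $\sqrt{d}$ to the positive root, and its conjugate is $-\bar\beta\sqrt{d} > 0$, so $\beta\sqrt{d}$ is totally positive. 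Hence $I \cdot (\sqrt{d}) = (\beta\sqrt{d})$ has trivial Artin symbol, as required.

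For the backward direction, suppose $I$ is an integral ideal of norm $|l|$ such that $I \cdot (\sqrt{d})^{\mathrm{sign}(l)}$ has trivial Artin symbol, so it equals $(\gamma)$ for some totally positive $\gamma \in \OO_K$. If $l > 0$, then $I = (\gamma)$ and $N_{K/\Q}(\gamma) = |l| = l$ (the norm is positive since $\gamma$ is totally positive); writing $\gamma = x + y\omega$ produces the integer solution. If $l < 0$, then $I = (\gamma/\sqrt{d})$, and $\gamma/\sqrt{d} \in \OO_K$ since $I$ is integral; its norm is $N_{K/\Q}(\gamma)/N_{K/\Q}(\sqrt{d}) = |l|\cdot d/(-d) = l$, and again writing $\gamma/\sqrt{d} = x + y\omega$ with $x,y\in\Z$ gives the solution.

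The only potential obstacle is the bookkeeping of signs: one must be careful that for $l > 0$ both embeddings of $\beta$ have matching signs (so the totally positive representative is genuinely available), while for $l < 0$ they have opposite signs and the twist by $(\sqrt{d})$ exactly restores total positivity because $\sqrt{d}$ is itself of mixed sign. Once this sign dictionary is set up, the argument is essentially a direct translation between elements of $\OO_K$ of prescribed norm and totally positive generators of ideals, and no deeper input beyond the identification $\Gal(H(K)/K) \cong \Cl^+(K)$ is needed.
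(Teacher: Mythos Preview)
Your proof is correct and follows essentially the same approach as the paper: both translate solubility of $N_d(x,y)=l$ into the existence of $\beta\in\OO_K$ with $N_{K/\Q}(\beta)=l$, then use that an ideal has trivial Artin symbol in $H(K)$ iff it has a totally positive generator, with the twist by $(\sqrt{d})^{\mathrm{sign}(l)}$ correcting the sign discrepancy when $l<0$. The only difference is cosmetic: you spell out the sign case analysis for $l>0$ versus $l<0$ explicitly, whereas the paper packages both cases uniformly via the exponent $\mathrm{sign}(l)$ and the one-line observation that a principal ideal with a generator of positive norm automatically has a totally positive generator.
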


\begin{proof}
Suppose that there are $x, y \in \Z$ with $N_d(x, y) = \ell$. In case $d \equiv 1 \bmod 4$, we look at the ideal $I = (x + y\frac{\sqrt{d} + 1}{2})$. It has norm $|\ell|$, and furthermore the element $x + y\frac{\sqrt{d} + 1}{2}$ has norm $\ell$. Then $I \cdot (\sqrt{d})^{\textup{sign}(\ell)}$ is a principal ideal that has a generator with positive norm. This implies that $I \cdot (\sqrt{d})^{\textup{sign}(\ell)}$ is a principal ideal with a totally positive generator, and hence it has trivial Artin symbol in $H(\Q(\sqrt{d}))$. In case $d \not \equiv 1 \bmod 4$, we use a similar argument with the ideal $I = (x + y\sqrt{d})$.

For the other direction suppose that there is an integral ideal $I$ of $\mathcal{O}_{\Q(\sqrt{d})}$ with norm $|\ell|$ and $I \cdot (\sqrt{d})^{\textup{sign}(\ell)}$ has trivial Artin symbol in $H(\Q(\sqrt{d}))$. Then $I \cdot (\sqrt{d})^{\textup{sign}(\ell)}$ is a principal ideal with a totally positive generator $\alpha$, so $N_{\Q(\sqrt{d})/\Q}(\alpha) = d^{\text{sign}(\ell)} |\ell|$. Hence we have
\[
I = \left(\frac{\alpha}{\sqrt{d}^{\text{sign}(\ell)}}\right) \quad \text{and} \quad N_{\Q(\sqrt{d})/\Q}\left(\frac{\alpha}{\sqrt{d}^{\text{sign}(\ell)}}\right) = \ell.
\]
Expanding $\alpha/\sqrt{d}^{\text{sign}(\ell)}$ as $x + y\frac{\sqrt{d} + 1}{2}$ if $d \equiv 1 \bmod 4$ and $x + y\sqrt{d}$ otherwise, we get the desired $x, y \in \Z$ with $N_d(x, y) = \ell$. 
\end{proof}

In case that $\ell \mid d$, we see that every prime dividing $\ell$ ramifies in $\Q(\sqrt{d})$. Hence there is exactly one ideal $\mathfrak{l}$ of $\Q(\sqrt{d})$ with norm $|\ell|$. Furthermore, since $\mathfrak{l} \in \text{Cl}(\Q(\sqrt{d}))[2]$, we see that it is enough to demand that $\mathfrak{l}$ has trivial Artin symbol in the narrow $2^\infty$-Hilbert class field of $\Q(\sqrt{d})$, denoted $H_2(\Q(\sqrt{d}))$, which is the maximal abelian extension of $\Q(\sqrt{d})$ that is unramified at all finite places and has degree a power of $2$. This yields the following criterion.

\begin{lemma}
\label{lCrit}
Take a non-zero integer $\ell$ and take a squarefree integer $d > 0$ divisible by $\ell$. Then there exist $x, y \in \Z$ with $N_d(x, y) = \ell$ if and only if there is an integral ideal $I$ of $\mathcal{O}_{\Q(\sqrt{d})}$ with norm $|\ell|$ such that $I \cdot (\sqrt{d})^{\textup{sign}(\ell)}$ has trivial Artin symbol in $H_2(\Q(\sqrt{d}))$.
\end{lemma}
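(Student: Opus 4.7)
The plan is to deduce this lemma from the previous one by showing that, under the hypothesis $l \mid d$, the candidate class $[I \cdot (\sqrt{d})^{\text{sign}(l)}]$ necessarily lies in the $2$-primary part of the narrow class group, so the Artin symbol in $H(\Q(\sqrt d))$ is trivial if and only if it is trivial in $H_2(\Q(\sqrt d))$.

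First I would verify the two-torsion claims. Since $d$ is squarefree and divisible by $l$, every rational prime $p \mid l$ ramifies in $\Q(\sqrt d)$. Hence for each such $p$ there is a unique prime ideal $\mathfrak{p}$ of $\mathcal{O}_{\Q(\sqrt d)}$ above $p$, and $\mathfrak{p}^2 = (p)$. Writing $|l| = \prod p^{a_p}$, the ideal $\mathfrak{l} := \prod \mathfrak{p}^{a_p}$ is the unique integral ideal of norm $|l|$, and $\mathfrak{l}^2 = (|l|)$, which is trivial in the narrow class group since $|l| > 0$. Thus $[\mathfrak{l}] \in \text{Cl}(\Q(\sqrt d))[2]$. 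Moreover $(\sqrt d)^2 = (d)$ is principal with totally positive generator $d$ (as $d > 0$), so $[(\sqrt d)] \in \text{Cl}(\Q(\sqrt d))[2]$ as well. In particular, for the unique $I = \mathfrak{l}$ of norm $|l|$, the class $[I \cdot (\sqrt d)^{\text{sign}(l)}]$ lies in $\text{Cl}(\Q(\sqrt d))[2]$.

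Next I would use the standard decomposition $\text{Cl}(\Q(\sqrt d)) = \text{Cl}(\Q(\sqrt d))[2^\infty] \oplus \text{Cl}(\Q(\sqrt d))[\text{odd}]$. Under the Artin map, $H(\Q(\sqrt d))$ corresponds to the full narrow class group while $H_2(\Q(\sqrt d))$ corresponds to its $2$-primary quotient. An element of $\text{Cl}(\Q(\sqrt d))[2]$ has trivial odd-part automatically, so it is trivial in $\text{Cl}(\Q(\sqrt d))$ if and only if it is trivial in $\text{Cl}(\Q(\sqrt d))[2^\infty]$. Equivalently, its Artin symbol in $H(\Q(\sqrt d))$ is trivial if and only if its Artin symbol in $H_2(\Q(\sqrt d))$ is trivial.

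Combining these two observations with the previous lemma then immediately yields the claimed equivalence. There is no real obstacle here: the content is only that ramification of $l$ in $\Q(\sqrt d)$ forces $[I \cdot (\sqrt d)^{\text{sign}(l)}]$ to be $2$-torsion, after which the passage from the full narrow Hilbert class field to its $2^\infty$-part is formal.
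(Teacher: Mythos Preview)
Your proposal is correct and follows essentially the same approach as the paper: the paper deduces this lemma from the preceding one via the observation that when $l \mid d$, every prime dividing $l$ ramifies in $\Q(\sqrt{d})$, the unique ideal $\mathfrak{l}$ of norm $|l|$ lies in $\text{Cl}(\Q(\sqrt{d}))[2]$, and hence triviality of the Artin symbol in $H(\Q(\sqrt{d}))$ reduces to triviality in $H_2(\Q(\sqrt{d}))$. You supply slightly more detail than the paper does (in particular checking that $[(\sqrt{d})]$ is also $2$-torsion in the narrow class group), which the paper leaves implicit.
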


Our final lemma allows us to deduce Corollary \ref{cHasse} directly from Theorem \ref{tMain}.

\begin{lemma}
Suppose that $\ell > 3$ is an odd squarefree integer and let $d > 0$ be a squarefree integer with $d \neq \ell$ and $d \neq 3 \ell$. Then we have $H_{-\ell, d} = 2$ if and only if $\ell \mid d$ and there are $x, y \in \Z$ with $N_d(x, y) = \ell$ or $N_d(x, y) = -\ell$.
\end{lemma}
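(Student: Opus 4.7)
Let $F = \Q(\sqrt{d})$ and $K = \Q(\sqrt{-l}, \sqrt{d})$. My plan is to identify $H_{-l, d}$ with the CM unit index $Q_K := [\OO_K^* : \OO_F^*]$, characterize the condition $Q_K = 2$ via Kummer theory for the extension $K/F$, and then translate this into the solubility of $N_d(x, y) = \pm l$. I would first check that under the hypotheses $l > 3$ and $d \neq l, 3l$, the squarefree parts of $-l$ and $-ld$ are neither $-1$ nor $-3$; this forces the unit groups of $\Q(\sqrt{-l})$ and $\Q(\sqrt{-ld})$ to reduce to $\{\pm 1\}$ and the roots of unity in $K$ to be $\pm 1$. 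Since $\{\pm 1\} \subset \OO_F^*$, this gives $\OO_{\Q(\sqrt{-l})}^* \OO_F^* \OO_{\Q(\sqrt{-ld})}^* = \OO_F^*$ and $W_K = \{\pm 1\}$, so $H_{-l, d} = Q_K \in \{1, 2\}$.

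For the Kummer step, let $\sigma$ be the non-trivial element of $\Gal(K/F)$. The map $\phi(u) := u/\sigma(u)$ sends $\OO_K^*$ into $W_K = \{\pm 1\}$ (because $\sigma$ acts as complex conjugation on the totally imaginary field $K$, so $|\phi(u)| = 1$ at every archimedean place) with kernel $\OO_F^*$. Hence $Q_K = 2$ iff some $\eta \in \OO_K^*$ satisfies $\sigma(\eta) = -\eta$, which forces $\eta = \beta\sqrt{-l}$ with $\beta \in F$ and $-l\beta^2 = \eta^2 \in \OO_F^*$. By Kummer theory for $K = F(\sqrt{-l})$, this is equivalent to the class of $-l$ in $F^*/F^{*2}$ being non-trivial and represented by some unit $u \in \OO_F^*$.

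Finally, I would translate this condition into the norm form. If $-lu = \alpha^2$ with $u \in \OO_F^*$ and $\alpha \in F$, then $\alpha \in \OO_F$ by integral closure, and $N_{F/\Q}(\alpha)^2 = l^2 N_{F/\Q}(u) = \pm l^2$ forces $N_{F/\Q}(\alpha) = \pm l$; the ideal identity $(\alpha)^2 = (l)$ then forces $l$ to ramify in $F$, i.e.\ $l \mid d$. Conversely, given $l \mid d$ and $\alpha \in \OO_F$ with $N_{F/\Q}(\alpha) = \pm l$, the ideal $(\alpha)$ equals the unique prime $\mathfrak{l}$ above $l$, and $u := -\alpha^2/l$ has $(u) = \OO_F$, so $u \in \OO_F^*$; its class in $F^*/F^{*2}$ equals that of $-l$, which is non-trivial since $F$ is real and $-l < 0$. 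By the previous step this gives $Q_K = 2$. The only delicate point is the sharp use of the hypotheses $l > 3$ and $d \neq l, 3l$ in the initial reduction; once that is done, everything else is standard Kummer theory and ideal arithmetic in the quadratic field $F$.
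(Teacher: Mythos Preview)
Your argument is correct and follows the same overall arc as the paper's proof: reduce the unit index condition to a Kummer-theoretic statement over $F=\Q(\sqrt{d})$, then translate that into ramification and principality of the ideal of norm $l$, and finally into solubility of $N_d(x,y)=\pm l$. The one substantive difference is that the paper invokes Kubota's result (\emph{Satz 2} of \cite{Kubota}) to obtain directly that $H_{-l,d}=2$ iff $\pm\epsilon$ is a square in $K$, whereas you bypass this citation by first observing $H_{-l,d}=[\OO_K^*:\OO_F^*]=Q_K$ (since the two imaginary quadratic subfields contribute only $\pm1$) and then analysing $Q_K$ via the homomorphism $u\mapsto u/\sigma(u)$ into $W_K=\{\pm1\}$. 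This makes your proof more self-contained; the paper's is shorter but relies on the external reference. A small caution: your claim that the hypotheses $l>3$, $d\neq l,3l$ force the squarefree part of $-ld$ to avoid $-3$ fails when $3\mid l$ and $d=l/3$. This edge case is also glossed over in the paper's assertion that $W_K=\{\pm1\}$, and it is irrelevant for the intended application where $l$ is a prime congruent to $3\bmod 4$, but strictly speaking the lemma as stated needs this extra exclusion.
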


\begin{proof}
By our assumptions on $\ell$ and $d$ we have that the roots of unity of $\Q(\sqrt{-\ell}, \sqrt{d})$ are $\{\pm 1\}$. Let $\epsilon$ be the fundamental unit of $\Q(\sqrt{d})$. Then Kubota's work \cite[Satz 2]{Kubota} shows that $H_{-\ell, d} = 2$ if and only if $- \epsilon$ is a square in $\Q(\sqrt{-\ell}, \sqrt{d})$. By Kummer theory this is equivalent to $\epsilon = \ell z^2$ for some $z \in \Q(\sqrt{d})^\ast$. This is in turn equivalent to the requirements that every prime dividing $\ell$ must ramify in $\Q(\sqrt{d})$, and furthermore that the unique ideal with norm $|\ell|$ is principal in $\Q(\sqrt{d})$. These last two conditions are equivalent to $\ell \mid d$ and the existence of $x, y \in \Z$ with $N_d(x, y) = \pm \ell$.
\end{proof}

\section{Expansions}
The goal of this section is to briefly summarize the main results about expansion maps that we need. The significance of expansion maps is that these are the main actors in the higher R\'edei reciprocity law and our reflection principles.

\subsection{Notation}
\label{ssNot}
Fix an algebraic closure $\overline{\Q}$ of $\Q$ for the rest of the paper. All our number fields are implicitly taken inside this fixed algebraic closure $\overline{\Q}$. If $K$ is a number field, we define $G_K := \Gal(\overline{\Q}/K)$.

Throughout, we view $\FF_2$ as a discrete $G_\Q$-module with trivial action. If $\phi : G_\Q \rightarrow X$ is a continuous map with $X$ a discrete topological space, we define $L(\phi)$ to be the smallest Galois extension $K$ of $\Q$ through which $\phi$ factors via the canonical projection map $G_\Q \rightarrow \Gal(K/\Q)$. This is well-defined by \cite[Lemma 2.3]{KP}. For us an unramified extension $L/K$ shall always mean unramified at all finite places of $K$.

In this paper $X$ will always be a product set $X_1 \times \dots \times X_r$, where each $X_i$ is a finite, non-empty set of primes intersecting trivially with all the other $X_j$. This allows us to identify $(x_1, \dots, x_r) \in X$ with the squarefree integer $x_1 \cdot \ldots \cdot x_r$, and we shall often do so implicitly. For $a \in \Z_{\geq 0}$, we will write $[a]$ for the set $\{1, \dots, a\}$. If $S \subseteq [r]$, we define
\[
\overline{X}_S := \prod_{i \in S} (X_i \times X_i) \times \prod_{i \not \in [r] - S} X_i,
\]
and we let $\pi_i$ be the projection to $X_i \times X_i$ if $i \in S$ and to $X_i$ if $i \not \in S$. The natural projection maps from $X_i \times X_i$ to $X_i$ are denoted by $\text{pr}_1$ and $\text{pr}_2$. For two subsets $S, S_0 \subseteq [r]$, we let $\pi_{S, S_0}$ be the projection map from $\overline{X}_S$ to
\[
\prod_{i \in S \cap S_0} (X_i \times X_i) \times \prod_{i \in ([r] - S) \cap S_0} X_i
\]
given by $\pi_i$ on each $i \in S_0$. The set $S$ shall often be clear from context, in case we will simply write $\pi_{S_0}$ for $\pi_{S, S_0}$. Finally, take some $\bar{x} \in \overline{X}_S$ and $T \subseteq S \subseteq [r]$. Then we define $\bar{x}(T)$ to be the following multiset
\[
\{\bar{y} \in \overline{X}_T : \pi_{[r] - (S - T)}(\bar{y}) = \pi_{[r] - (S - T)}(\bar{x}) \text{ and } \forall i \in S - T \exists j \in [2] : \pi_i(\bar{y}) = \text{pr}_j(\pi_i(\bar{x}))\}
\]
with the multiplicity of $\bar{y} \in \bar{x}(T)$ being
\[
\prod_{i \in S - T} \left|\left\{j \in [2] : \pi_i(\bar{y}) = \text{pr}_j(\pi_i(\bar{x}))\right\}\right|.
\]

\subsection{Expansion maps}
In this subsection, we shall quickly recall the facts about expansions that we will need. These are treated more elaborately in \cite[Section 2.1]{Smith} and \cite[Section 7]{KP}. The paper \cite{KP2} is entirely devoted to a careful study of the properties of expansions. We start by recalling a definition from \cite[Definition 3.21]{KP2}.

\begin{mydef}
Let $X \subseteq \textup{Hom}_{\textup{top.gr.}}(G_{\Q}, \FF_2)$ be linearly independent and let $\chi_0 \in X$. An expansion map with support $X$ and pointer $\chi_0$ is a continuous group homomorphism
\[
\psi : G_\Q \rightarrow \mathbb{F}_2[\FF_2^{X - \{\chi_0\}}] \rtimes \mathbb{F}_2^{X - \{\chi_0\}}
\]
such that $\pi_\chi \circ \psi = \chi$ for every $\chi \in X - \{\chi_0\}$, where $\pi_\chi : \mathbb{F}_2[\FF_2^{X - \{\chi_0\}}] \rtimes \mathbb{F}_2^{X - \{\chi_0\}} \rightarrow \FF_2$ is the natural projection, and $\pi \circ \psi = \chi_0$, where $\pi : \mathbb{F}_2[\FF_2^{X - \{\chi_0\}}] \rtimes \FF_2^{X - \{\chi_0\}} \rightarrow \FF_2$ is the unique non-trivial character that sends the subgroup $\{0\} \rtimes \FF_2^{X - \{\chi_0\}}$ to $0$.
\end{mydef}

Note that an expansion map is automatically surjective, since an expansion map surjects modulo the Frattini by assumption. There is another characterization of expansion maps that we give now, first given in Section 3.3 of \cite{KP2}. We have an isomorphism
\[
\FF_2[\FF_2^{X - \{\chi_0\}}] \cong \FF_2[\{t_x : x \in X - \{\chi_0\}\}]/(\{t_x^2 : x \in X - \{\chi_0\}\})
\]
by sending $t_x$ to $1 \cdot \text{id} + 1 \cdot e_x$, where $e_x$ is the vector that is $1$ exactly on the $x$-th coordinate. Note that the squarefree monomials $t_Y := \prod_{y \in Y} t_y$ give a basis of 
\[
\FF_2[\{t_x : x \in X - \{\chi_0\}\}]/(\{t_x^2 : x \in X - \{\chi_0\}\}),
\]
as $Y$ varies through the subsets of $X - \{\chi_0\}$. Therefore, projection on monomials gives rise to a collection of continuous $1$-cochains
\[
\phi_Y(\psi) : G_\Q \rightarrow \FF_2
\]
for every $Y \subseteq X - \{\chi_0\}$. Together they allow us to reconstruct $\psi$ by the formula
\begin{equation}
\label{eReconstruct}
\psi(g) = \left(\sum_{Y \subseteq X - \{\chi_0\}} \phi_Y(\psi)(g) t_Y, (\chi(g))_{\chi \in X - \{\chi_0\}}\right).
\end{equation}
Now define $\chi_S := \prod_{\chi \in S} \chi$, where the product is taken in $\FF_2$. From equation (\ref{eReconstruct}) and the composition law for the semidirect product we deduce that
\begin{equation}
\label{eSmith22}
(d\phi_Y(\psi))(g_1, g_2) = \sum_{\varnothing \subsetneq S \subseteq Y} \chi_S(g_1) \phi_{Y - S}(\psi)(g_2),
\end{equation}
where $d$ is the operator that sends $\text{Map}(G_\Q, \FF_2)$ to $\text{Map}(G_\Q \times G_\Q, \FF_2)$ with the rule
\[
(d\phi)(g_1, g_2) = \phi(g_1) + \phi(g_2) + \phi(g_1g_2).
\]
Equation (\ref{eSmith22}) is simply \cite[eq. (2.2)]{Smith}. Conversely, if we are given a system of maps $(\phi_Y)_{Y \subseteq X - \{\chi_0\}}$ satisfying equation (\ref{eSmith22}) and $\phi_\varnothing = \chi_0$, we get an expansion map $\psi$ with support $X$ and pointer $\chi_0$.

For each odd prime $p$, we choose an element $\sigma_p \in G_\Q$ such that
\[
\chi(\sigma_p) = 1 \Longleftrightarrow \chi \text{ has conductor divisible by } p
\] 
for all quadratic characters $\chi$. We also choose elements $\sigma_2(1), \sigma_2(2) \in G_\Q$ such that 
\[
\chi(\sigma_2(1)) = 1 \Longleftrightarrow \chi \text{ has conductor exactly divisible by } 4
\]
and
\[
\chi(\sigma_2(2)) = 1 \Longleftrightarrow \chi \text{ has conductor exactly divisible by } 8.
\]
We define
\[
\mathfrak{S} := \{\sigma_p : p \text{ odd}\} \cup \{\sigma_2(1), \sigma_2(2)\}.
\]
Write $G_\Q^{\text{pro-}2}$ for the maximal pro-$2$-quotient of $G_\Q$.

\begin{lemma}
\label{lTopGen}
The image of $\mathfrak{S}$ in $G_\Q^{\textup{pro-}2}$ is a minimal set of topological generators for $G_\Q^{\textup{pro-}2}$.
\end{lemma}

\begin{proof}
This is a standard fact, see for example \cite[Proposition 2.2]{KPPell} for a proof.
\end{proof}

\begin{mydef}
\label{dExpansion}
For any integer $x$, we let $\chi_x : G_\Q \rightarrow \FF_2$ be the character corresponding to $\Q(\sqrt{x})$. Let $X := X_1 \times \dots \times X_r$ with $|X_i| = 2$ for $i \in [r]$. For a subset $U \subseteq [r]$, we declare $\chi_U : G_\Q \rightarrow \FF_2$ to be
\[
\chi_U(\sigma) := \prod_{i \in U} \chi_{\textup{pr}_1(\pi_i(x)) \cdot \textup{pr}_2(\pi_i(x))}(\sigma).
\]
A \textup{pre-expansion} for $X$ is a sequence $(\phi_T)_{T \subsetneq [r]}$ where each $\phi_T: G_\Q \rightarrow \FF_2$ is a continuous $1$-cochain satisfying
\begin{align}
\label{edphi}
(d\phi_T)(\sigma,  \tau) = \sum_{\varnothing \neq U \subseteq T} \chi_U(\sigma) \phi_{T - U}(\tau).
\end{align}
For the remainder of the paper, we shall always assume that $\phi_\varnothing$ is linearly independent from the space of characters spanned by $\{\chi_{\{i\}} : i \in [r]\}$. 

Furthermore, a pre-expansion is said to be \textup{good} if $\phi_T(\sigma) = 0$ for all $\sigma \in \mathfrak{S}$ and all $\varnothing \subsetneq T \subsetneq [r]$. An expansion for $X$ is a sequence $(\phi_T)_{T \subseteq [r]}$ satisfying the recursive equation (\ref{edphi}) for each $T \subseteq [r]$. An expansion is \textup{good} if $\phi_T(\sigma) = 0$ for all $\sigma \in \mathfrak{S}$ and all $\varnothing \subsetneq T \subseteq [r]$.
\end{mydef}

The following result is the key result regarding expansions. It is a rephrasing of \cite[Proposition 2.1]{Smith}, see also \cite[Proposition 7.3]{KP} for a similar statement. Informally, it shows that a pre-expansion can be completed to a good expansion under favorable circumstances.

\begin{prop}
\label{pExpansion}
Let $X := X_1 \times \dots \times X_r$ with $|X_i| = 2$ for $i \in [r]$. Let $(\phi_T)_{T \subsetneq [r]}$ be a good pre-expansion for $X$. Assume that for every $i \in [r]$, every prime $p \in X_i$ splits completely in $L(\phi_{[r] - \{i\}})$. Further assume that the character $\chi_{\{i\}}$ is locally trivial at $2$ and at each prime $p \in X_j$ for each distinct $i, j \in [r]$.

Then there is a unique continuous map $\phi_{[r]} : G_\Q \rightarrow \FF_2$ such that
\[
(d\phi_{[r]})(\sigma,  \tau) = \sum_{\varnothing \neq U \subseteq [r]} \chi_U(\sigma) \phi_{[r] - U}(\tau)
\]
and such that $(\phi_T)_{T \subseteq [r]}$ a good expansion.
\end{prop}

\begin{proof}
See \cite[Proposition 2.16]{KPPell}.
\end{proof}

In Section \ref{sMain} we work with many expansions simultaneously. For a box $X = X_1 \times \dots \times X_r$, $S \subseteq [r]$ and $\bar{x} \in \overline{X}_S$, we shall use the shorthand $\phi_{\bar{x}, a}$ for an expansion map $\phi_S$ associated to the box $\pi_S(\bar{x})$ with $\phi_\varnothing = \chi_a$. In case that $\text{pr}_1(\pi_i(\bar{x})) \neq \text{pr}_2(\pi_i(\bar{x}))$ for all $i \in S$, we can naturally view each $\pi_i(\bar{x})$ as a set with two primes as required for Definition \ref{dExpansion}. If instead $\text{pr}_1(\pi_i(\bar{x})) = \text{pr}_2(\pi_i(\bar{x}))$ for some $i \in S$, we set $\phi_{\bar{x}, a}$ to be zero. Observe that $\phi_{\bar{x}, a}$ only depends on $a$ and $\pi_S(\bar{x})$.

\subsection{Governing expansions}
\label{ssGov}
Since we have to work with many expansions simultaneously in the final section, we abstract the essential properties in the notion of governing expansions.

\begin{mydef}
\label{dGovExp}
Let $X := X_1 \times \dots \times X_r$, let $S \subseteq [r]$ and let $a \in \Z$ be squarefree. We say that there exists a governing expansion $\mathfrak{G}$ on $(X, S, a)$ if
\begin{itemize}
\item we have for all $T \subseteq S$ and all $\bar{x} \in \overline{X}_T$ a good expansion $\phi_{\bar{x}, a}$ satisfying
\[
(d\phi_{\bar{x}, a})(\sigma, \tau) = \sum_{\varnothing \subsetneq T' \subseteq T} \chi_{T'}(\sigma) \phi_{\pi_{T - T'}(\bar{x}), a}(\tau);
\]
\item take $T \subseteq S$, $i \in T$ and $\bar{x}_0, \bar{x}_1, \bar{x}_2 \in \overline{X}_T$. Suppose that
\[
\pi_{T - \{i\}}(\bar{x}_0) = \pi_{T - \{i\}}(\bar{x}_1) = \pi_{T - \{i\}}(\bar{x}_2)
\]
and that there are primes $p_0, p_1, p_2$ satisfying
\[
\textup{pr}_j(\pi_i(\bar{x}_k)) = p_{k + j - 1}
\]
for all $j \in \{1, 2\}$ and $k \in \{0, 1, 2\}$, where the indices are taken modulo $3$. Then we have
\begin{align}
\label{ePhiAdd}
\phi_{\bar{x}_0, a} + \phi_{\bar{x}_1, a} = \phi_{\bar{x}_2, a}.
\end{align}
\end{itemize} 
\end{mydef}

These conditions are rather stringent, and typically there does not exist a governing expansion $\mathfrak{G}$ on $(X, S, a)$. To construct governing expansions, we introduce additive systems.

\begin{mydef}
\label{dAS}
Let $X := X_1 \times \dots \times X_r$. An additive system $\mathfrak{A}$ on $X$ is a tuple 
\[
(\overline{Y}_S, \overline{Y}_S^\circ, F_S, A_S)_{S \subseteq[r]}
\]
satisfying
\begin{itemize}
\item for each $S \subseteq [r]$, we have that $A_S$ is a finite $\FF_2$ vector space, $\overline{Y}_S$ and $\overline{Y}_S^\circ$ are sets satisfying
\[
\overline{Y}_S^\circ \subseteq \overline{Y}_S \subseteq \overline{X}_S
\]
and $F_S : \overline{Y}_S \rightarrow A_S$ is a function such that
\[
\overline{Y}_S^\circ := \{\bar{y} \in \overline{Y}_S : F_S(\bar{y}) = 0\};
\]
\item we have for all non-empty $S \subseteq [r]$ that
\[
\overline{Y}_S = \{\bar{x} \in \overline{X}_S : \bar{x}(T) \subseteq \overline{Y}_T^\circ \textup{ for all } T \subsetneq S\}.
\]
Here we view $\bar{x}(T)$ as a set by forgetting the multiplicities;
\item take $i \in S \subseteq [r]$ and take $\bar{x}_0, \bar{x}_1, \bar{x}_2 \in \overline{Y}_S$ satisfying
\[
\pi_{S - \{i\}}(\bar{x}_0) = \pi_{S - \{i\}}(\bar{x}_1) = \pi_{S - \{i\}}(\bar{x}_2)
\]
such that there are primes $p_0, p_1, p_2$ with
\[
\textup{pr}_j(\pi_i(\bar{x}_k)) = p_{k + j - 1}
\]
for all $j \in \{1, 2\}$ and all $k \in \{0, 1, 2\}$, where the indices are taken modulo $3$. Then we have
\begin{align}
\label{eFAdd}
F_S(\bar{x}_0) + F_S(\bar{x}_1) = F_S(\bar{x}_2).
\end{align}
\end{itemize}
We will sometimes write $\overline{Y}_S(\mathfrak{A})$, $\overline{Y}_S^\circ(\mathfrak{A})$, $F_S(\mathfrak{A})$ and $A_S(\mathfrak{A})$ to stress that this data is associated to the additive system $\mathfrak{A}$.
\end{mydef}

We remark that equation (\ref{eFAdd}) implies that
\[
F_S(\bar{x}) = 0
\]
in case $\text{pr}_1(\pi_i(\bar{x})) = \text{pr}_2(\pi_i(\bar{x}))$. We will now construct an additive system that will help us find governing expansions.

\begin{lemma}
\label{lASGov}
Let $r \geq 2$ be an integer and let $X := X_1 \times \dots \times X_r$ be such that $X_j$ contains only odd primes. Take an odd squarefree integer $a = q_1 \cdot \ldots \cdot q_t$ such that 
\begin{align}
\label{eLegendreAss}
\left(\frac{a}{p}\right) = 1 \textup{ and } \left(\frac{pp'}{q_i}\right) = 1 \textup{ for all } i \in [t]
\end{align}
for all $p, p' \in X_j$ with $j \in [r]$. Let $\Omega$ be a set of places of $\Q$ disjoint from the $X_j$ and $q_i$. We assume that every $v \in \Omega$ splits in $\Q(\sqrt{a})$. Let $W \subseteq X$ be a subset such that for all $w_1, w_2 \in W$, for all distinct $i, j \in [r]$ and for all $v \in \Omega$
\[
\left(\frac{\pi_i(w_1)}{\pi_j(w_1)}\right) = \left(\frac{\pi_i(w_2)}{\pi_j(w_2)}\right), \quad \pi_i(w_1) \pi_i(w_2) \equiv 1 \bmod 8, \quad \pi_i(w_1) \pi_i(w_2) \equiv \square \bmod v.
\]
Then there exists an additive system $\mathfrak{A}$ on $X$ such that
\begin{itemize}
\item we have $\overline{Y}_\varnothing^\circ(\mathfrak{A}) = W$;
\item we have $|A_S(\mathfrak{A})| \leq 2^{r - 1 + |\Omega|}$ for all $S \subseteq [r]$;
\item suppose that $Z := Z_1 \times \dots \times Z_r$ satisfies $Z_i \subseteq X_i$ and suppose that 
\[
\overline{Z}_{[r]} \subseteq \overline{Y}_{[r]}(\mathfrak{A}).
\]
Then there exists a governing expansion $\mathfrak{G}$ on $(Z, [r], a)$ such that every $v \in \Omega$ splits completely in $\phi_{\bar{z}, a}$ for $\bar{z} \in \overline{Z}_{[r]}$.
\end{itemize}
\end{lemma}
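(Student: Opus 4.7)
My approach is to construct the additive system $\mathfrak{A}$ and the expansion maps $\phi_{\bar{x}, a}$ simultaneously by induction on $|S|$, using Proposition \ref{pExpansion} at each step. For the base case $S = \emptyset$, I would take $A_\emptyset$ to be the $\FF_2$-vector space generated by the local invariants characterizing $W$: the pairwise Legendre symbols $\leg{\pi_i(x)}{\pi_j(x)}$, the residue class of $\pi_i(x)$ modulo $8$, and the square class of $\pi_i(x)$ in $\FF_v^*$ for each $v \in \Omega$. Fixing $w_0 \in W$, let $F_\emptyset(x) \in A_\emptyset$ record the differences of these invariants between $x$ and $w_0$, so that $F_\emptyset^{-1}(0) = W$ by the hypothesis on $W$. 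The naive dimension $\binom{r}{2} + 2r + r|\Omega|$ reduces to $r - 1 + |\Omega|$ via quadratic reciprocity combined with the assumptions $(a/p) = 1 = (p/q_i)$ and the splitting of each $v \in \Omega$ in $\Q(\sqrt{a})$.

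For the inductive step at $S \neq \emptyset$, define $\overline{Y}_S$ by the hereditary condition mandated by Definition \ref{dAS}. For $\bar{x} \in \overline{Y}_S$ the inductively constructed maps $\{\phi_{\pi_T(\bar{x}), a}\}_{T \subsetneq S}$ form a promising, good pre-expansion for $\prod_{i \in S} \pi_i(\bar{x})$ pointed at $\chi_a$; the cooperativity required by Proposition \ref{pExpansion} follows from $\pi_i(w_1)\pi_i(w_2) \equiv 1 \bmod 8$ (local triviality at $2$) and the constancy of pairwise Legendre symbols on $W$. The proposition then yields a good extension $\phi_{\bar{x}, a}$. I would let $F_S: \overline{Y}_S \to A_S \subseteq A_\emptyset$ be the obstruction to this extension simultaneously satisfying (\ref{ePhiAdd}) and splitting completely at every $v \in \Omega$; this automatically gives $|A_S| \leq 2^{r - 1 + |\Omega|}$, while the additive property (\ref{eFAdd}) is inherited from the corresponding linearity of the recursion (\ref{edphi}) under swapping primes at coordinate $i$.

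Given $Z \subseteq X$ with $\overline{Z}_{[r]} \subseteq \overline{Y}_{[r]}(\mathfrak{A})$, the expansions $\phi_{\bar{z}, a}$ assemble into the desired governing expansion. The additive relation (\ref{ePhiAdd}) holds because the $\chi_U$ in (\ref{edphi}) satisfy $\chi_{U, \bar{x}_0} + \chi_{U, \bar{x}_1} = \chi_{U, \bar{x}_2}$ when $i \in U$ and are unchanged otherwise, so $\phi_{\bar{x}_0, a} + \phi_{\bar{x}_1, a}$ and $\phi_{\bar{x}_2, a}$ satisfy the same recursion and can be normalized to agree. The $\Omega$-splitting of each $\phi_{\bar{z}, a}$ is established by induction on $|T|$: the base $v \in \Omega$ splits in $L(\phi_\emptyset) = \Q(\sqrt{a})$ by hypothesis, and at each inductive step Higher R\'edei reciprocity (Theorem \ref{Redei reciprocity}) expresses $\text{Art}(v, L(\phi_{\bar{z}|_T, a})/\Q)$ as a reciprocal Artin-sum that vanishes by the encoded local conditions.

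The main obstacle is coordinating the dimension bound $|A_S| \leq 2^{r - 1 + |\Omega|}$ with a coherent choice of extensions: ensuring that $F_S$ measures all the obstructions one cares about, namely additivity (\ref{ePhiAdd}), $\Omega$-splitting, and compatibility with higher levels, while still landing in a space of dimension at most $r - 1 + |\Omega|$. This is precisely where the Higher R\'edei reciprocity law is indispensable, supplying the global relations that allow the local obstructions to be resolved consistently.
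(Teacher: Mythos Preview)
Your inductive framework is correct in spirit, but there are several concrete gaps.

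First, the base case is both overcomplicated and incorrect. The hypothesis says only that certain invariants are \emph{constant} on $W$, not that $W$ equals the level set of those invariants; so your $F_\emptyset^{-1}(0)$ would in general be a proper superset of $W$. Moreover, the claimed reduction of $\binom{r}{2} + 2r + r|\Omega|$ to $r - 1 + |\Omega|$ does not follow from quadratic reciprocity or the Legendre assumptions: the pairwise symbols among the $\pi_i(x)$ are genuinely independent. The paper sidesteps all of this by simply taking $\overline{Y}_\emptyset = W$ and $F_\emptyset = 0$, so $|A_\emptyset| = 1$.

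Second, and more seriously, you do not explain how to normalize the choice of $\phi_{\bar{x}, a}$ so that (\ref{ePhiAdd}) holds. Saying the maps ``can be normalized to agree'' because they satisfy the same recursion is not enough: two solutions to (\ref{edphi}) differ by an arbitrary quadratic character unramified over $\Q(\chi_S \cdot \chi_a)$. The paper's mechanism is to fix, once and for all, inertia generators $\sigma_p$ in $\Gal(M/\Q)$, where $K$ is the multiquadratic field adjoining $\sqrt{q_i}$ and every $\sqrt{p}$ for $p$ in the $X_j$, and $M$ is its narrow Hilbert class field; one then twists each $\phi_{\bar{x}, a}$ so that $\phi_{\bar{x}, a}(\sigma_p) = 0$ for all such $p$. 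The sum $\phi_{\bar{x}_0, a} + \phi_{\bar{x}_1, a} + \phi_{\bar{x}_2, a}$ is then a genuine character, unramified outside the primes ramifying in $K$, hence factoring through $\Gal(M/\Q)$, and vanishing on every $\sigma_p$; since these generate $\Gal(M/\Q)$, the character is zero. This rigidifying trick is the heart of the argument and is missing from your proposal.

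Third, your description of $F_S$ as ``the obstruction to (\ref{ePhiAdd}) and $\Omega$-splitting'' conflates two different roles. Additivity is achieved by the normalization just described and is not something $F_S$ detects. The paper's $F_S$ instead records the tuple $\bigl(\phi_{\bar{x}, a}(\text{Frob}\, k)\bigr)_k$ as $k$ runs over $\pi_j(\bar{x})$ for $j \in [r] - S$ together with the places in $\Omega$; vanishing of this tuple is exactly the promising condition needed to apply Proposition \ref{pExpansion} at the next level, plus $\Omega$-splitting at the current level. It lands in $\FF_2^{r - |S| + |\Omega|}$, giving $|A_S| \leq 2^{r - 1 + |\Omega|}$ immediately since $|S| \geq 1$.

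Finally, Theorem \ref{Redei reciprocity} plays no role in this lemma. The $\Omega$-splitting is encoded directly in the additive system via Frobenius conditions, not via a reciprocity law; reciprocity enters the paper only later, in the proof of Lemma \ref{lAS}.
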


\begin{proof}
Note that an additive system $\mathfrak{A}$ is uniquely specified by the maps $F_S(\mathfrak{A})$ and the set $\overline{Y}_\varnothing(\mathfrak{A})$. We take $\overline{Y}_\varnothing(\mathfrak{A}) = W$ and we will inductively construct the maps $F_S(\mathfrak{A})$. If $S = \varnothing$, we take $F_\varnothing(\mathfrak{A})$ to be the zero map. 

Now suppose that $S = \{i\}$. For $\bar{x} \in \overline{Y}_{\{i\}}(\mathfrak{A})$, Proposition \ref{pExpansion} and equation (\ref{eLegendreAss}) imply that there is a good expansion $\phi_{\bar{x}, a}$. We claim that equation (\ref{ePhiAdd}) holds. Indeed, suppose that $\bar{x}_0, \bar{x}_1, \bar{x}_2 \in \overline{Y}_{\{i\}}(\mathfrak{A})$ satisfy the assumptions for equation (\ref{ePhiAdd}). Then we have
\[
d\left(\phi_{\bar{x}_0, a} + \phi_{\bar{x}_1, a} + \phi_{\bar{x}_2, a}\right) = 0.
\]
This shows that $\phi_{\bar{x}_0, a} + \phi_{\bar{x}_1, a} + \phi_{\bar{x}_2, a}$ is a quadratic character, which vanishes on all $\sigma \in \mathfrak{S}$. Since $\mathfrak{S}$ is a set of topological generators by Lemma \ref{lTopGen}, we obtain the equality
\[
\phi_{\bar{x}_0, a} + \phi_{\bar{x}_1, a} + \phi_{\bar{x}_2, a} = 0, 
\]
proving the claim. Then we define $F_{\{i\}}(\mathfrak{A})$ by sending $\bar{x} \in \overline{Y}_{\{i\}}(\mathfrak{A})$ to
\[
\phi_{\bar{x}, a}(\text{Frob } k)
\]
as $k$ runs through $\pi_j(\bar{x})$ for $j \in [r] - \{i\}$ and $\Omega$. We observe that $\text{Frob}(k)$ lands in the center of $\Gal(L(\phi_{\bar{x}, a})/\Q)$ by the assumption $\bar{x} \in \overline{Y}_{\{i\}}(\mathfrak{A})$ and the assumptions on $W$. Then equation (\ref{eFAdd}) follows from equation (\ref{ePhiAdd}).

Now we proceed inductively. We see that Proposition \ref{pExpansion} implies that there is a good expansion $\phi_{\bar{x}, a}$ for $\bar{x} \in \overline{Y}_S(\mathfrak{A})$. Once more we have that
\[
d\left(\phi_{\bar{x}_0, a} + \phi_{\bar{x}_1, a} + \phi_{\bar{x}_2, a}\right) = 0,
\]
which follows from the fact that equation (\ref{ePhiAdd}) holds for all the $\phi_{\pi_T(\bar{x}_j), a}$ for $T \subsetneq S$ by the induction hypothesis. From this, we deduce just like before that
\[
\phi_{\bar{x}_0, a} + \phi_{\bar{x}_1, a} + \phi_{\bar{x}_2, a} = 0.
\]
We define $F_S(\mathfrak{A})$ by sending $\bar{x} \in \overline{Y}_S(\mathfrak{A})$ to
\[
\phi_{\bar{x}, a}(\text{Frob } k)
\]
as $k$ runs through $\pi_j(\bar{x})$ for $j \in [r] - S$ and $\Omega$. This defines our additive system $\mathfrak{A}$, which indeed satisfies the listed properties.
\end{proof}

\section{Higher R\'edei reciprocity}
\label{sRed}
This section summarizes what was formerly the main algebraic innovation of this paper, a generalization of the classical R\'edei reciprocity law (in turn a generalization of quadratic reciprocity). This \emph{higher R\'edei reciprocity} law is now superseded by \cite[Section 3]{KPPell}, which is why we have opted to state this new slightly stronger reciprocity law.

\subsection{Statement of the reciprocity law}
Let $n \in \mathbb{Z}_{\geq 1}$ and let $A \subseteq \text{Hom}_{\text{top.gr.}}(G_{\Q}, \FF_2)$ with $|A| = n$. Let $\chi_1, \chi_2 \not \in A$ be two distinct elements of $\text{Hom}_{\text{top.gr.}}(G_{\Q}, \FF_2)$. Write
$$
A_1 := A \cup \{\chi_1\}, \quad A_2: = A \cup \{\chi_2\}. 
$$ 
For a finite extension $L/\Q$, we denote by $\text{Ram}(L/\Q)$ the set of places of $\Q$ that ramify in $L/\Q$. Furthermore, for a collection of characters $T \subseteq \text{Hom}_{\text{top.gr.}}(G_{\Q}, \FF_2)$, we denote by $\Q(T)$ the corresponding multiquadratic extension of $\Q$. We make the following assumptions throughout this subsection:
\begin{itemize}
\item as $\chi$ varies in $A$, the $n$ sets $\text{Ram}(\Q(\chi)/\Q)$ are non-empty and pairwise disjoint;
\item the sets $\text{Ram}(\Q(\chi_1)/\Q)$ and $\text{Ram}(\Q(\chi_2)/\Q)$ are non-empty and disjoint from 
\[
\bigcup_{\chi \in A} \text{Ram}(\Q(\chi)/\Q);
\]
\item we have the inclusion $\text{Ram}(\Q(\chi_1)/\Q) \cap \text{Ram}(\Q(\chi_2)/\Q) \subseteq \{(2)\}$ and $\text{inv}_2(\chi_1 \cup \chi_2) = 0$, where $\text{inv}_2$ is the invariant map of $\Q_2$;
\item the conductor of $\chi_1$ or the conductor of $\chi_2$ is not divisible by $8$;
\item the places $(2)$ and $\infty$ split completely in $\Q(A)/\Q$.
\end{itemize}
In particular, the first assumption yields that $A$ is a set of $n$ linearly independent characters over $\FF_2$. Furthermore, $\chi_1$ is linearly independent from $A$, and the same holds for $\chi_2$.

Now suppose that we have two expansion maps
$$
\psi_1, \psi_2: G_{\Q} \twoheadrightarrow \FF_2[\FF_2^A] \rtimes \FF_2^A,
$$
with supports $A_1,A_2$ and pointers $\chi_1,\chi_2$ respectively. Let $(\phi_{1, B})_{B \subseteq A}, (\phi_{2, B})_{B \subseteq A}$ be the corresponding system of continuous $1$-cochains from $G_\Q$ to $\FF_2$ with $\phi_{1, \varnothing} = \chi_1, \phi_{2, \varnothing} = \chi_2$. Note that $L(\psi_1), L(\psi_2)$ are central $\FF_2$-extensions of
$$
M(\psi_1) := \Q(A) \prod_{B \subsetneq A}L(\phi_{1, B})/ \Q, \quad M(\psi_2) := \Q(A) \prod_{B \subsetneq A}L(\phi_{2, B})/ \Q.
$$
We need one more definition before stating our reciprocity law. In the next definition, we will use index notation modulo $2$. Since $(2)$ splits completely in $\Q(A)$, it follows from equation (\ref{eSmith22}) that $\phi_T(\psi_i)|_{G_{\Q_2}}$ is a quadratic character.

\begin{mydef}
\label{dRedeiAdmissible}
Let $(A_1, A_2, \psi_1, \psi_2)$ be a $4$-tuple as above. We say $(A_1, A_2, \psi_1, \psi_2)$ is \emph{R\'edei admissible} if the following conditions simultaneously hold
\begin{itemize}
\item if the infinite place $\infty$ of $\Q$ splits completely in $\Q(A_1 \cup A_2)$, then $\infty$ splits completely in $M(\psi_1)M(\psi_2)/\Q$ as well;
\item if $i \in \{1, 2\}$ and $\infty$ ramifies in $L(\psi_i)/\Q$, then $\infty$ splits completely in $M(\psi_{i + 1 \bmod 2})/\Q$;
\item if $i \in \{1, 2\}$ and an odd place $w$ of $\Q(A)$ ramifies in $L(\psi_i)/\Q(A)$, then the unique place $v$ of $\Q$ below $w$ splits completely in $M(\psi_{i + 1 \bmod 2})/\Q$. Furthermore, $v$ is unramified in $L(\psi_{i + 1 \bmod 2})/\Q$;
\item if $L(\psi_1)L(\psi_2)/\Q$ is ramified at $(2)$, then there exists $i$ such that $\psi_i$ is a good expansion map, $\chi_i$ has conductor not divisible by $8$ and $\phi_T(\psi_i)$ restricts trivially to $G_{\Q_2}$ for all $\varnothing \subsetneq T \subsetneq A$. Furthermore, we have $\textup{inv}_2(\chi_i \cup \phi_T(\psi_{i + 1 \bmod 2})|_{G_{\Q_2}}) = 0$ for all $T$.
\end{itemize}
We call $(\chi_1,\chi_2)$ the \emph{pointer vector} of the $4$-tuple and we call $A$ the \emph{base set} of the $4$-tuple. 
\end{mydef}

Let now $(A_1, A_2, \psi_1, \psi_2)$ be a R\'edei admissible $4$-tuple, as above, with pointer vector $(\chi_1,\chi_2)$. Then it follows by the definition that each odd place $v \in \text{Ram}(\Q(\chi_1)/\Q)$ is unramified in $L(\psi_2)/\Q$ and furthermore the consequently defined Artin class $\text{Art}(v, L(\psi_2)/\Q)$ lands in $\text{Gal}(L(\psi_2)/M(\psi_2))$, which is a central subgroup of $\text{Gal}(L(\psi_2)/\Q)$ of size equal to $2$ and hence can uniquely be identified with $\mathbb{F}_2$. We conclude that $\text{Art}(v, L(\psi_2)/\Q)$ is a well-defined element of $\FF_2$. Symmetrically, the same holds if we swap the role of $1$ and $2$. 

Now suppose that $(2)$ ramifies in $L(\psi_1)L(\psi_2)/\Q$. Let $i$ be as guaranteed in the last part of the above definition. If $(2)$ splits in $\chi_i$, then $(2)$ splits completely in $M(\psi_i)/\Q$ and is unramified in $L(\psi_i)/\Q$. Therefore $\text{Art}((2), L(\psi_i)/\Q)$ is again a well-defined element of $\FF_2$. Now suppose that $(2)$ does not split in $\chi_i$. Write $\mathfrak{t}$ for the unique place of $\Q(\chi_i)$ above $(2)$. Then the Artin symbol $\text{Art}(\mathfrak{t}, L(\psi_i)/\Q(\chi_i))$ is once more a well-defined element of $\FF_2$. We will often abuse notation, and simply write this Artin symbol as $\text{Art}((2), L(\psi_i)/\Q)$.

Finally, for a quadratic extension $\Q(\sqrt{d})/\Q$, we put $\widetilde{\text{Ram}}(\Q(\sqrt{d})/\Q)$ to be the set of places in $\text{Ram}(\Q(\sqrt{d})/\Q)$ with the only exception of $(2)$, which is excluded in case $d$ has even $2$-adic valuation. 

\begin{theorem} 
\label{tRedei}
Let $(A_1, A_2, \psi_1, \psi_2)$ be a R\'edei admissible $4$-tuple with pointer vector $(\chi_1, \chi_2)$. Then we have
$$
\sum_{v \in \widetilde{\emph{Ram}}(\Q(\chi_1)/\Q)} \textup{Art}(v, L(\psi_2)/\Q) = \sum_{v' \in \widetilde{\emph{Ram}}(\Q(\chi_2)/\Q)} \textup{Art}(v', L(\psi_1)/\Q).
$$
\end{theorem}

\begin{proof}
See \cite[Theorem 3.2]{KPPell}.
\end{proof}

\subsection{Raw cocycles}
We will now introduce raw cocycles. These are the fundamental objects of interest in our paper, but difficult to calculate with. Our main idea is to eventually relate raw cocycles to governing expansions, which are more workable. To achieve this, we shall need to develop a substantial amount of theory.

Let $N := \Q_2/\Z_2$, which we endow with the discrete topology. We view $N$ as a $G_\Q$-module with trivial action. For any $x \in X$, we let $N(x)$ be the $G_\Q$-module $N$ twisted with the action of $\Q(\sqrt{x})$, i.e. $\sigma \cdot_x n = n$ if $\chi_x(\sigma) = 0$ and $\sigma \cdot_x n = -n$ if $\chi_x(\sigma) = 1$.

\begin{mydef}
We define $\textup{Cocy}(G_\Q, N(x)[2^k])$ to be the set of continuous $1$-cocycles $\psi$ such that $\Q(\sqrt{x}) L(\psi)/\Q(\sqrt{x})$ is unramified.
\end{mydef}

\begin{remark}
If $k > 1$, then $L(\psi)$ automatically contains $\Q(\sqrt{x})$. However, this need not be the case if $k = 1$.
\end{remark}

By definition of $\textup{Cocy}(G_\Q, N(x)[2^k])$, inflation of cocycles induces an isomorphism
\[
\text{Cocy}(G_\Q, N(x)[2^k]) \cong \text{Cocy}(\text{Gal}(H(\Q(\sqrt{x}))/\Q), N(x)[2^k]).
\]
Of fundamental importance is the split exact sequence
\[
0 \rightarrow N(x)[2^k] \rightarrow \textup{Cocy}(\text{Gal}(H(\Q(\sqrt{x}))/\Q), N(x)[2^k]) \rightarrow \text{Cl}(\Q(\sqrt{x}))^\vee[2^k] \rightarrow 0.
\]
Fix an element $\sigma \in \text{Gal}(H(\Q(\sqrt{x}))/\Q)$ projecting non-trivially in $\text{Gal}(\Q(\sqrt{x})/\Q)$. The first map is given by sending $n$ to the unique cocycle that sends $\sigma$ to $n$ and sends the group $\text{Gal}(H(\Q(\sqrt{x}))/\Q(\sqrt{x}))$ to zero, while the second map is simply restriction of cocycles. The exact sequence is split, since all the groups appearing are killed by $2^k$ and $N(x)[2^k] \cong \Z/2^k\Z$ as abelian groups. This allows us to work with cocycles instead of the class group.

If $x$ is a squarefree integer, we have a natural map
\[
f: \{d \text{ squarefree} : d \mid \Delta_{\Q(\sqrt{x})}\} \rightarrow \text{Cl}(\Q(\sqrt{x}))[2],
\]
where $\Delta_K$ denotes the discriminant of $K$. We now define the $m$-th Artin pairing
\[
\text{Art}_{m, x} : f^{-1}(2^{m - 1} \text{Cl}(\Q(\sqrt{x}))[2^m]) \times 2^{m - 1} \text{Cocy}(\text{Gal}(H(\Q(\sqrt{x}))/\Q), N(x)[2^m]) \rightarrow \FF_2
\]
by sending $(b, \chi)$ to $\psi(\text{Frob } \mathfrak{b})$, where $\mathfrak{b}$ is the unique ideal of norm $b$ and furthermore $\psi \in \text{Cocy}(\text{Gal}(H(\Q(\sqrt{x}))/\Q), N(x)[2^m])$ is any lift of $\chi$ satisfying $2^{m - 1} \psi = \chi$. The left kernel of this pairing is $f^{-1}(2^m \text{Cl}(\Q(\sqrt{x}))[2^{m + 1}])$ and the right kernel of this pairing is $2^m \text{Cocy}(\text{Gal}(H(\Q(\sqrt{x}))/\Q), N(x)[2^{m + 1}])$.

We introduce the notion of a raw cocycle.

\begin{mydef}
A raw cocycle for $x$ is a sequence $(\psi_i)_{0 \leq i \leq x}$ with $\psi_i \in \textup{Cocy}(G_\Q, N(x)[2^k])$ and $2\psi_i = \psi_{i - 1}$ for all $1 \leq i \leq k$. 
\end{mydef}

\subsection{Profitability}
With the higher R\'edei reciprocity law, we have taken our first major step towards the reflection principles that we need. This reciprocity law is also the key input for the theory of \emph{profitable triples} first developed in \cite[Section 4]{KPPell}. We will now replicate the main result on profitable triples.

\begin{mydef}
Let $X := X_1 \times \dots \times X_r$, let $S \subseteq [r]$ with $s := |S| \geq 1$ and let $\bar{x} \in \overline{X}_S$. We assume that $\textup{pr}_1(\pi_i(\bar{x})) \cdot \textup{pr}_2(\pi_i(\bar{x})) \equiv 1 \bmod 8$ for all $i \in S$. Let $x_0 \in \bar{x}(\varnothing)$. Let $(\psi_{s + 1}(x))_{x \in \bar{x}(\varnothing) - \{x_0\}}$ be a tuple with each $\psi_{s + 1}(x)$ a raw cocycle for $x$. We assume that there exists a character $\chi_a$ such that
\[
\psi_1(x) = 2^s \cdot \psi_{s + 1}(x) = \chi_a
\]
for all $x \in \bar{x}(\varnothing) - \{x_0\}$. We call the triple $(\bar{x}, \chi_a, (\psi_{s + 1}(x))_{x \in \bar{x}(\varnothing) - \{x_0\}})$ profitable if the following properties are all satisfied for all $i \in S$
\begin{itemize}
\item we have
\[
\left(\sum_{\substack{x \in \bar{x}(\varnothing) \\ \forall j \in T : \pi_j(x) \neq \pi_j(x_0)}} \psi_{s + 1 - |T|}(x)\right)(\sigma_p) = 0
\]
for each $T \subseteq S$ containing $i$ and each $p \in \{\textup{pr}_1(\pi_i(\bar{x})), \textup{pr}_2(\pi_i(\bar{x}))\}$;
\item we have
\[
\sum_{\substack{x \in \bar{x}(\varnothing) \\ \pi_i(x) \neq \pi_i(x_0)}} \psi_{s - 1}(x) = 0;
\]
\item there exists a good expansion map $\phi$ for $\pi_{S - \{i\}}(\bar{x})$ with pointer $\phi_\varnothing = \chi_{\textup{pr}_1(\pi_i(\bar{x})) \cdot \textup{pr}_2(\pi_i(\bar{x}))}$. Furthermore, for every $j \in [r] - S$, the prime $\pi_j(\bar{x})$ splits completely in $L(\phi)$. Finally, $(2)$ and $\infty$ split completely in $M(\phi)$.
\end{itemize}
Given a profitable triple $(\bar{x}, \chi_a, (\psi_{s + 1}(x))_{x \in \bar{x}(\varnothing) - \{x_0\}})$ and a subset $\varnothing \subseteq T \subsetneq S$, we attach the map
\[
\psi_T(\bar{x}, \chi_a, (\psi_{s + 1}(x))_{x \in \bar{x}(\varnothing) - \{x_0\}}) := \sum_{\substack{x \in \bar{x}(\varnothing) \\ \forall j \in S - T : \pi_j(x) \neq \pi_j(x_0)}} \psi_{|T| + 1}(x).
\]
We will also use this notation for $T = S$ in case we are further given a raw cocycle $\psi_{s + 1}(x_0)$ for $x_0$.
\end{mydef}

\begin{remark}
\label{rPositivea}
Since $s \geq 1$, we see that $\chi_a$ is a double in $\textup{Cl}(\Q(\sqrt{x}))^\vee$ for all $x \in \bar{x}(\varnothing) - \{x_0\}$. This forces $a > 0$ by checking the local embedding problem at $\mathbb{R}$.
\end{remark}

\begin{theorem}
\label{tProfitable}
Let $(\bar{x}, \chi_a, (\psi_{s + 1}(x))_{x \in \bar{x}(\varnothing) - \{x_0\}})$ be a profitable triple. Then there exists a raw cocycle $\psi_{s + 1}(x_0)$ for $x_0$ satisfying the following properties
\begin{itemize}
\item we have for all $1 \leq i \leq s$
\[
\sum_{x \in \bar{x}(\varnothing)} \psi_i(x) = 0;
\]
\item the tuple
\[
(\psi_T(\bar{x}, \chi_a, (\psi_{s + 1}(x))_{x \in \bar{x}(\varnothing) - \{x_0\}}))_{T \subseteq S}
\]
is an expansion map $\psi$ for $\pi_S(\bar{x})$ with pointer $\chi_a$. If some place above an odd prime $p$ ramifies in the extension
\[
L(\psi)/\Q(\{\sqrt{\textup{pr}_1(\pi_i(\bar{x})) \cdot \textup{pr}_2(\pi_i(\bar{x}))} : i \in S\}),
\]
then there exists $j \in [r] - S$ such that $p = \pi_j(\bar{x})$. Furthermore, the ramification index of any prime in $L(\psi)/\Q$ is at most $2$. 

Finally, the restriction of $\psi_T(\bar{x}, \chi_a, (\psi_{s + 1}(x))_{x \in \bar{x}(\varnothing) - \{x_0\}})$ to $G_{\Q_2}$ is a quadratic character contained in the span of $\{\chi_5, \chi_{x_0}\}$ for each $T \subseteq S$.
\end{itemize}
\end{theorem}

\begin{proof}
See \cite[Theorem 4.6]{KPPell}.
\end{proof}

\section{Reflection principles}
\label{sRef}
The section proves three reflection principles, which relate the class group structure of different fields. This will serve as the algebraic input for our analytic machinery. We will start with minimality and agreement, which correspond directly to reflection principles in \cite[Section 2]{Smith}. In our final subsection, we prove a completely new reflection principle. This new reflection principle is the main innovation of our work, and relies critically on the higher R\'edei reciprocity law and profitability.

\subsection{Minimality}
\begin{mydef}
Let $X := X_1 \times \dots \times X_r$, let $S \subseteq [r]$ and let $\bar{x} \in \overline{X}_S$. Suppose that we are given for each $x \in \bar{x}(\varnothing)$ a raw cocycle $(\psi_i(x))_{0 \leq i \leq |S|}$. We say that the set of raw cocycles is minimal at $\bar{x}$ if
\begin{align}
\label{eMinimal}
\sum_{y \in \bar{y}(\varnothing)} \psi_{|T|}(y) = 0
\end{align}
for all $T \subseteq S$ and all $\bar{y} \in \bar{x}(T)$. Note that the sum here has to be taken with multiplicities.
\end{mydef}

We have all the necessary notation to state our first reflection principle, which is directly based on part (i) of Theorem 2.8 of Smith \cite{Smith}.

\begin{theorem}
\label{tRefMin}
Let $X := X_1 \times \dots \times X_r$, let $S \subseteq [r]$ with $|S| \geq 2$ and let $\bar{x} \in \overline{X}_S$. Take $x_0 \in \bar{x}(\varnothing)$. Let $(\psi_i(x))_{0 \leq i \leq |S|}$ be a raw cocycle for all $x \in \bar{x}(\varnothing) - \{x_0\}$. We assume that for all $T \subseteq S$ and all $\bar{y} \in \bar{x}(T)$ not containing $x_0$, we have that $(\psi_i(y))_{0 \leq i \leq |T|}$ is minimal at $\bar{y}$. Also suppose that for every $i \in S$ we have that $\textup{pr}_1(\pi_i(\bar{x})) \cdot \textup{pr}_2(\pi_i(\bar{x}))$ is a square locally at $2$ and at all primes in $\pi_j(\bar{x})$ with $i \neq j$. Then there is a raw cocycle $(\psi_i(x_0))_{0 \leq i \leq |S|}$ such that $\psi_1(x_0) = \psi_1(x)$ for all $x \in \bar{x}(\varnothing)$.

Additionally, suppose that there is an integer $b$ such that $b \in f^{-1}(2^{m - 1} \textup{Cl}(\Q(\sqrt{x}))[2^m])$ for all $x \in \bar{x}(\varnothing)$. Then we also have
\[
\sum_{x \in \bar{x}(\varnothing)} \textup{Art}_{|S|, x}(b, \psi_1(x)) = 0.
\]
\end{theorem}

\begin{proof}
We note that minimality implies that $\psi_1(x) = \psi_1(x')$ for all $x, x' \in \bar{x}(\varnothing)$ not equal to $x_0$. For the first part, we put
\begin{align}
\label{ex0corner}
\psi_{|S|}(x_0) := - \sum_{\substack{x \in \bar{x}(\varnothing) \\ x \neq x_0}} \psi_{|S|}(x).
\end{align}
Then it is readily seen that $2^{|S| - 1} \psi_{|S|}(x_0) = \psi_1(x)$ for any $x \in \bar{x}(\varnothing)$. Furthermore, our minimality assumption (\ref{eMinimal}) and \cite[Proposition 2.9]{KPPell} show that $\psi_{|S|}(x_0)$ is a cocycle from $G_\Q$ to $N(x_0)[2^k]$. It remains to deal with the ramification locus of $L(\psi_{|S|}(x_0))$.

We first claim that $2\psi_{|S|}(x_0) \in \text{Cocy}(\text{Gal}(H(\Q(\sqrt{x_0}))/\Q), N(x_0)[2^{|S| - 1}])$. But observe that the minimality assumptions and equation (\ref{ex0corner}) imply that
\[
2\psi_{|S|}(x_0) = - \sum_{\substack{x \in \bar{x}(\varnothing) - \bar{y}(\varnothing) \\ x \neq x_0}} \psi_{|S| - 1}(x)
\]
for any $\bar{y} \in \bar{x}(T)$ not containing $x_0$ such that $|T| = |S| - 1$. Therefore we can find for any prime $p$ not dividing $\Delta_{\Q(\sqrt{x_0})}$ a cube $\bar{y}$ such that $p$ is unramified in every $L(\psi_{|S| - 1}(x))$ for $x \in \bar{x}(\varnothing) - \bar{y}(\varnothing)$. This gives the claim, since equation (\ref{ex0corner}) implies that the ramification degree of any prime $p$ in the larger field $L(\psi_{|S|}(x_0))$ is at most $2$.

Since $L(\psi_{|S|}(x_0))/L(2\psi_{|S|}(x_0)) \Q(\sqrt{x_0})$ is a central $\FF_2$-extension, we see that the extension $L(\psi_{|S|}(x_0))/L(2\psi_{|S|}(x_0)) \Q(\sqrt{x_0})$ can be made unramified over $\Q$ at all primes, except those that ramify in $L(2\psi_{|S|}(x_0)) \Q(\sqrt{x_0})$, by twisting with a character $\chi$, see for example \cite[Proposition 4.8]{KP}. Using once more that the ramification degree of any prime $p$ in the field $L(\psi_{|S|}(x_0))$ is at most $2$, it follows that the primes that already ramify in the field $L(2\psi_{|S|}(x_0)) \Q(\sqrt{x_0})$ can not ramify further in $L(\psi_{|S|}(x_0))/L(2\psi_{|S|}(x_0)) \Q(\sqrt{x_0})$. Therefore $L(\psi_{|S|}(x_0) + \chi)$ is an unramified extension of $\Q(\sqrt{x_0})$ for some character $\chi$, proving the first part of the theorem.

For the second part, recall that $\text{Art}_{|S|, x}(b, \psi_1(x))$ does not depend on the choice of the lift $\psi$ with $2^{|S| - 1} \psi = \psi_1(x)$, and hence we may choose the lift $\psi_{|S|}(x_0) + \chi$ of $\psi_1(x_0)$. By definition we have that
\[
\sum_{x \in \bar{x}(\varnothing)} \text{Art}_{|S|, x}(b, \psi_1(x)) = \sum_{x \in \bar{x}(\varnothing)} \psi_{|S|}(x)(\text{Frob } \mathfrak{b}),
\]
where $\mathfrak{b}$ is the unique ideal of $\Q(\sqrt{x})$ of norm $b$. Now locally at any prime $p$ dividing $b$, we know that $\Q_p(\sqrt{x}) = \Q_p(\sqrt{x'})$ for all $x, x' \in \overline{x}(\varnothing)$ by our assumptions. Since $\psi_{|S|}(x)$ becomes a character when restricted to $\Q_p(\sqrt{x})$, the relation
\[
\sum_{x \in \bar{x}(\varnothing)} \psi_{|S|}(x) = \chi
\]
yields
\[
\sum_{x \in \bar{x}(\varnothing)} \psi_{|S|}(x)(\text{Frob } \mathfrak{b}) = \chi(\text{Frob } \mathfrak{b}).
\]
We claim that the last expression is trivial. Indeed, $\mathfrak{b}$ is in $2\text{Cl}(\Q(\sqrt{x}))[4]$ for all $x \in \bar{x}(\varnothing)$, and therefore pairs trivially with any character that is unramified outside the union of the primes dividing $\Delta_{\Q(\sqrt{x})}$ as $x$ ranges through $\bar{x}(\varnothing)$. But equation (\ref{ex0corner}) shows that the character $\chi$ is of this shape, concluding the proof of our theorem.
\end{proof}

\subsection{Agreement}
Our second reflection principle is based on the notion of agreement.

\begin{mydef}
Let $X := X_1 \times \dots \times X_r$, let $S \subseteq [r]$, let $i_a \in S$ and let $\bar{x} \in \overline{X}_S$ be given. Take for each $x \in \bar{x}(\varnothing)$ a raw cocycle $(\psi_i(x))_{0 \leq i \leq |S|}$. We further assume that we have a good expansion $\phi_{\pi_{S - \{i_a\}}(\bar{x}), \textup{pr}_1(\pi_{i_a}(\bar{x})) \cdot \textup{pr}_2(\pi_{i_a}(\bar{x}))}$. We say that the set of raw cocycles agrees with the expansion at $\bar{x}$ if
\[
\sum_{y \in \bar{y}(\varnothing)} \psi_{|T|}(y)
=
\left\{
\begin{array}{ll}
\phi_{\pi_{T - \{i_a\}}(\bar{y}), \textup{pr}_1(\pi_{i_a}(\bar{x})) \cdot \textup{pr}_2(\pi_{i_a}(\bar{x}))}  & \mbox{\textup{if} } i_a \in T \\
0 & \mbox{\textup{if} } i_a \not \in T
\end{array}
\right.
\]
for all $T \subseteq S$ and all $\bar{y} \in \bar{x}(T)$.
\end{mydef}

We are now ready to state a reflection principle that is very similar to part (ii) of \cite[Theorem 2.8]{Smith}.

\begin{theorem}
\label{tRefAgr}
Let $X := X_1 \times \dots \times X_r$, let $S \subseteq [r]$ with $|S| \geq 2$, let $i_a \in S$ and let $\bar{x} \in \overline{X}_S$ be given. Take $x_0 \in \bar{x}(\varnothing)$. Let $(\psi_i(x))_{0 \leq i \leq |S|}$ be a raw cocycle for all $x \in \bar{x}(\varnothing) - \{x_0\}$ such that there is a character $\chi$ with the property
\[
\psi_1(x) = \chi + \chi_{\pi_{i_a}(x)} \textup{ for all } x.
\]
Assume that there is a good expansion $\phi_{\pi_{S - \{i_a\}}(\bar{x}), \textup{pr}_1(\pi_{i_a}(\bar{x})) \cdot \textup{pr}_2(\pi_{i_a}(\bar{x}))}$. We further assume that for all $T \subseteq S$ and all $\bar{y} \in \bar{x}(T)$ not containing $x_0$, we have that $(\psi_i(y))_{0 \leq i \leq |T|}$ agrees with the expansion at $\bar{y}$. Also suppose that for every $i \in S$, $\textup{pr}_1(\pi_i(\bar{x})) \cdot \textup{pr}_2(\pi_i(\bar{x}))$ is a square locally at $2$ and at all primes in $\pi_j(\bar{x})$ with $i \neq j$. Then there is a raw cocycle $(\psi_i(x_0))_{0 \leq i \leq |S|}$ such that $\psi_1(x_0) = \psi_1(x)$ for all $x \in \bar{x}(\varnothing)$ with $\pi_{i_a}(x_0) = \pi_{i_a}(x)$.

Moreover, assume that there exists an integer $b$ satisfying $b \in f^{-1}(2^{m - 1} \textup{Cl}(\Q(\sqrt{x}))[2^m])$ for all $x \in \bar{x}(\varnothing)$. Then we have
\begin{align}
\label{eAgrN}
\sum_{x \in \bar{x}(\varnothing)} \textup{Art}_{|S|, x}(b, \psi_1(x)) = \sum_{p \mid b} \phi_{\pi_{S - \{i_a\}}(\bar{x}), \textup{pr}_1(\pi_{i_a}(\bar{x})) \cdot \textup{pr}_2(\pi_{i_a}(\bar{x}))}(\textup{Frob } p).
\end{align}
If instead $x/b \in f^{-1}(2^{m - 1} \textup{Cl}(\Q(\sqrt{x}))[2^m])$ for all $x \in \bar{x}(\varnothing)$, we have
\begin{align}
\label{eAgrI}
\sum_{x \in \bar{x}(\varnothing)} \textup{Art}_{|S|, x}(x/b, \psi_1(x)) = \sum_{p \mid b \infty} \phi_{\pi_{S - \{i_a\}}(\bar{x}), \textup{pr}_1(\pi_{i_a}(\bar{x})) \cdot \textup{pr}_2(\pi_{i_a}(\bar{x}))}(\textup{Frob } p).
\end{align}
\end{theorem}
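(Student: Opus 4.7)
The proof follows the same scheme as Theorem \ref{tRefMin}, the only structural change being that the top cocycle at $x_0$ is arranged so that $\sum_{x \in \bar{x}(\emptyset)}\psi_{|S|, x}$ equals the given expansion cochain rather than vanishing. Writing $\phi := \phi_{\pi_{S-\{i_a\}}(\bar{x}), \textup{pr}_1(\pi_{i_a}(\bar{x}))\cdot\textup{pr}_2(\pi_{i_a}(\bar{x}))}$ and viewing $\FF_2 \hookrightarrow \tfrac{1}{2}\Z/\Z \subseteq N(x_0)$, I set
\[
\psi_{|S|, x_0} := \phi - \sum_{x \in \bar{x}(\emptyset),\, x \neq x_0}\psi_{|S|, x}, \qquad \psi_{i, x_0} := 2^{|S|-i}\psi_{|S|, x_0} \text{ for } i < |S|.
\]
That $\psi_{|S|, x_0}$ is a $1$-cocycle for $G_\Q$ into $N(x_0)[2^{|S|}]$ follows from \cite[Proposition 2.5]{Smith} combined with the defining expansion relation (\ref{edphi}) for $\phi$, which exactly cancels the would-be obstruction coming from the ``non-minimality'' of the agreement condition on subcubes containing $i_a$.

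The identity $\psi_{1, x_0} = \psi_{1, x}$ for $x$ with $\pi_{i_a}(x) = \pi_{i_a}(x_0)$ is then a short computation: since $|S| \geq 2$ we have $2^{|S|-1}\phi = 0$ in $N(x_0)$, so $\psi_{1, x_0} = -\sum_{x \neq x_0}(\chi + \chi_{\pi_{i_a}(x)})$, and because each value of $\pi_{i_a}$ is attained by exactly $2^{|S|-1}$ tuples, the sum collapses modulo $2$ to $\chi + \chi_{\pi_{i_a}(x_0)}$. For ramification I repeat the inductive argument of Theorem \ref{tRefMin}: agreement at a pair $\bar{y} \in \bar{x}(S-\{i_a\})$ not containing $x_0$ (so that $i_a \notin T = S-\{i_a\}$) gives $2\psi_{|S|, x_0} \in \textup{Cocy}(\textup{Gal}(H(\Q(\sqrt{x_0}))/\Q), N(x_0)[2^{|S|-1}])$, and a twist by a character $\chi'$ produced via \cite[Proposition 4.8]{KP} makes $L(\psi_{|S|, x_0}+\chi')/\Q(\sqrt{x_0})$ unramified, since no prime can ramify to degree greater than $2$ by the defining formula.

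For (\ref{eAgrN}) I choose compatible Frobenii at each $p \mid b$; the local squareness assumption makes $\Q_p(\sqrt{x})$ independent of $x \in \bar{x}(\emptyset)$, so each $\psi_{|S|, x}$ restricts to a character of the decomposition group there and $\sum_x \psi_{|S|, x}(\textup{Frob }p) = \phi(\textup{Frob }p) + \chi'(\textup{Frob }p)$. Summing over $p \mid b$, the $\chi'$-contribution $\chi'(\textup{Frob }\mathfrak{b})$ vanishes because $\mathfrak{b} \in 2\textup{Cl}(\Q(\sqrt{x}))[4]$ pairs trivially with characters unramified outside $\bigcup_j \pi_j(\bar{x})$, leaving $\sum_{p \mid b}\phi(\textup{Frob }p)$. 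For (\ref{eAgrI}) the analogous computation yields $\sum_x \textup{Art}_{|S|, x}(x/b, \psi_{1, x}) = \sum_{p \mid x/b}\phi(\textup{Frob }p)$; the replacement $\sum_{p \mid x/b} \leftrightarrow \sum_{p \mid b\infty}$ then comes from a reciprocity statement for $\phi$ obtained by descending it to a quadratic character on $G_{\Q(\{\chi\}_{\chi \in A})}$, where $A$ supports the expansion, and invoking the Hilbert product formula in the spirit of the proof of Theorem \ref{Redei reciprocity}.

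The principal obstacle is the ramification control: although $\phi$ itself is ramified at every prime in $\bigcup_j \pi_j(\bar{x})$, the combination $\phi - \sum_{x \neq x_0}\psi_{|S|, x}$ must become unramified over $\Q(\sqrt{x_0})$ after a single character twist, and verifying this requires the inductive agreement structure to kill precisely the right depth of ramification at each step. A secondary delicate point is the passage (\ref{eAgrN})$\Rightarrow$(\ref{eAgrI}): since $\phi$ is only a $1$-cochain on $G_\Q$ (becoming a character only after restriction to the multiquadratic base), Hilbert reciprocity does not apply to it directly and must be replaced by its cochain analogue, where the inductive expansion structure supplies the missing product-formula input.
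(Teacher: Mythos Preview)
Your approach is the one the paper intends: it proves Theorem \ref{tRefAgr} by the same scheme as Theorem \ref{tRefMin}, and your definition $\psi_{|S|,x_0} := \phi - \sum_{x\neq x_0}\psi_{|S|,x}$ is exactly the right modification. The cocycle check, the computation of $\psi_{1,x_0}$, the ramification argument, and the derivation of (\ref{eAgrN}) are all correct and parallel the paper's proof of Theorem \ref{tRefMin}.

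The one place where your argument does not go through as written is (\ref{eAgrI}). Your sentence ``the analogous computation yields $\sum_x \textup{Art}_{|S|,x}(x/b,\psi_{1,x}) = \sum_{p\mid x/b}\phi(\textup{Frob }p)$'' is ill-formed: the left side is summed over $x$, but the right side still has a free $x$. The computation for (\ref{eAgrN}) worked because $\mathfrak{b}$ was the \emph{same} ideal (in norm) for every $x$, so the identity $\sum_x\psi_{|S|,x}=\phi+\chi'$ could be evaluated at a single $\textup{Frob }\mathfrak{b}$. For $x/b$ this fails. The clean route is not a Hilbert-reciprocity swap but rather the ideal relation: in $\Q(\sqrt{x})$ the ideal of norm $|x/b|$ and the ideal of norm $|b|$ multiply to $(\sqrt{x})$, whose Artin symbol in the narrow Hilbert class field is exactly the image of complex conjugation, i.e.\ $\textup{Frob}(\infty)$. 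Hence $\psi_{|S|,x}(\textup{Frob }\mathfrak{c}_x)=\psi_{|S|,x}(\textup{Frob}(\infty))+\psi_{|S|,x}(\textup{Frob }\mathfrak{b})$, and summing over $x$ now does produce $(\phi+\chi')(\textup{Frob}(\infty))+(\phi+\chi')(\textup{Frob }\mathfrak{b})$; the $\chi'$-terms vanish as in (\ref{eAgrN}), leaving $\sum_{p\mid b\infty}\phi(\textup{Frob }p)$. No descent of $\phi$ to a multiquadratic field or appeal to Theorem \ref{Redei reciprocity} is needed here.
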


\begin{proof}
This can be proven in the same way as Theorem \ref{tRefMin}.
\end{proof}

\begin{remark}
Write $\phi = \phi_{\pi_{S - \{i_a\}}(\bar{x}), \textup{pr}_1(\pi_{i_a}(\bar{x})) \cdot \textup{pr}_2(\pi_{i_a}(\bar{x}))}$. The agreement assumptions imply that $p$ splits completely in $M(\phi)$, so that $\textup{Frob}(p)$ lands in $\Gal(L(\phi)/M(\phi)) \cong \FF_2$.
\end{remark}

\subsection{New reflection principles}
Let $\ell$ be a prime congruent to $3$ modulo $4$. We say that a squarefree integer $d > 0$ is $\ell$-special if $\ell \mid d$ and all odd primes $p \neq \ell$ dividing $d$ satisfy $(\ell/p) = 1$. We say that $\bar{x}$ is $\ell$-special in case all its elements are $\ell$-special. If $\ell \mid x$, we write $\textup{Up}_{\Q(\sqrt{x})/\Q}(\ell)$ for the unique prime of $\Q(\sqrt{x})$ above $\ell$. We now define $\ell$-profitable triples and $-\ell$-profitable triples.

\begin{mydef}
\label{def:l-profitable}
Let $(\bar{x}, \chi_a, (\psi_{s + 1}(x))_{x \in \bar{x}(\varnothing) - \{x_0\}})$ be a $\ell$-special, profitable triple with $\bar{x} \in \overline{X}_S$. We say that  $(\bar{x}, \chi_a, (\psi_{s + 1}(x))_{x \in \bar{x}(\varnothing) - \{x_0\}})$ is a $\ell$-profitable triple if
\begin{itemize}
\item there exists a good expansion map $\phi_{\pi_S(\bar{x}), \ell}$. Furthermore, $\infty$ splits completely in the field $M(\phi_{\pi_S(\bar{x}), \ell})$ and the unique prime above $(2)$ splits completely in $M(\phi_{\pi_S(\bar{x}), \ell})/\Q(\sqrt{\ell})$;
\item all odd primes $\pi_j(\bar{x}) \neq \ell$ split completely in the field $M(\phi_{\pi_S(\bar{x}), \ell})$ for all $j \in [r] - S$;
\item we demand that $\textup{Up}_{\Q(\sqrt{x})/\Q}(\ell) \in 2^s\textup{Cl}(\mathbb{Q}(\sqrt{x}))[2^{s + 1}]$ for each $x \in \bar{x}(\varnothing)$;
\item we have for all $\varnothing \subsetneq T \subseteq S$
$$ 
\left(\sum_{\substack{x \in \bar{x}(\varnothing) \\ \forall j \in T: \pi_j(x) \neq \pi_j(x_0)}} \psi_{s - |T| + 1}(x) \right)(\sigma_\ell) = 0.
$$
\end{itemize}
We say that the triple is $-\ell$-profitable if
\begin{itemize}
\item there exists a good expansion map $\phi_{\pi_S(\bar{x}), -\ell}$. Furthermore, all places above $(2)$ split completely in $M(\phi_{\pi_S(\bar{x}), -\ell})/\Q(\sqrt{-\ell})$;
\item all odd primes $\pi_j(\bar{x}) \neq \ell$ split completely in the field $M(\phi_{\pi_S(\bar{x}), -\ell})$ for all $j \in [r] - S$;
\item we demand that $(\sqrt{x}) \cdot \textup{Up}_{\Q(\sqrt{x})/\Q}(\ell) \in 2^s\textup{Cl}(\mathbb{Q}(\sqrt{x}))[2^{s + 1}]$ for each $x \in \bar{x}(\varnothing)$;
\item we have for all $\varnothing \subsetneq T \subseteq S$
$$ 
\left(\sum_{\substack{x \in \bar{x}(\varnothing) \\ \forall j \in T: \pi_j(x) \neq \pi_j(x_0)}} \psi_{s - |T| + 1}(x) \right)(\sigma_\ell) = 0.
$$
\end{itemize}
\end{mydef}

We have the following tailor-made reflection principle for $\ell$-special integers. We remind the reader of our convention on $\text{Art}((2),L(\psi_i))$ given right after Definition \ref{dRedeiAdmissible}.

\begin{theorem} 
\label{tReflectionEll}
$(a)$ Let $(\bar{x}, \chi_a, (\psi_{s + 1}(x))_{x \in \bar{x}(\varnothing) - \{x_0\}})$ be a $\ell$-profitable triple. Then 
\[
\chi_a \in  2^s\textup{Cl}(\mathbb{Q}(\sqrt{x_0}))^\vee[2^{s + 1}].
\]
Furthermore, each prime number $p \mid a$ splits completely in $M(\phi_{\pi_S(\bar{x}), \ell})/\Q$, the unique prime above $(2)$ splits completely in $M(\phi_{\pi_S(\bar{x}), \ell})/\Q(\sqrt{\ell})$ and
$$
\sum_{x \in \bar{x}(\varnothing)} \textup{Art}_{s + 1, x}(\ell, \chi_a) = \sum_{p \mid a} \phi_{\pi_S(\bar{x}), \ell}(\textup{Frob}(p)).
$$
$(b)$ Let $(\bar{x}, \chi_a, (\psi_{s + 1}(x))_{x \in \bar{x}(\varnothing) - \{x_0\}})$ be a $-\ell$-profitable triple. Then
\[
\chi_a \in 2^s\textup{Cl}(\mathbb{Q}(\sqrt{x_0}))^\vee[2^{s + 1}]
\]
Moreover, each prime number $p \mid a$ splits completely in $M(\phi_{\pi_S(\bar{x}), -\ell})/\Q$ and
$$
\sum_{x \in \bar{x}(\varnothing)} \textup{Art}_{s + 1, x}(x/\ell, \chi_a) = \sum_{p \mid a} \phi_{\pi_S(\bar{x}), -\ell}(\textup{Frob}(p)).
$$
\end{theorem}

\begin{proof}
Let us prove part $(a)$. We will then explain the changes for part $(b)$. We choose a prime $\mathfrak{l}$, lying above $\ell$, of the field
\[
K := \Q(\{\sqrt{\text{pr}_1(\pi_i(\bar{x})) \cdot \text{pr}_2(\pi_i(\bar{x}))} : i \in S\}, \sqrt{x_0}).
\]
For each $x \in \bar{x}(\varnothing)$, the extension $\Q(\sqrt{x})$ is contained in $K$. Hence we see that
$$
K L(\psi_{s + 1}(x))/K
$$ 
is an abelian extension unramified at $\mathfrak{l}$. As such, for each $x \in \bar{x}(\varnothing)$, the Artin symbol of $\mathfrak{l}$ is a well-defined element of the group $\text{Gal}(K L(\psi_{s + 1}(x))/K)$. We now claim that
$$
\text{Art}_{s + 1, x}(\ell, \chi_a) = \psi_{s + 1}(x)|_{G_K}(\textup{Frob}(\mathfrak{l})).
$$
Indeed, $\ell$ splits completely in $\Q(\{\sqrt{\text{pr}_1(\pi_i(\bar{x})) \cdot \text{pr}_2(\pi_i(\bar{x}))} : i \in S\})$ thanks to our assumptions. This shows that 
\[
\psi_{s + 1}(x)|_{G_{\Q(\sqrt{x})}}(\text{Frob}(\text{Up}_{\Q(\sqrt{x})/\Q}(\ell))) = \psi_{s + 1}(x)|_{G_K}(\text{Frob}(\mathfrak{l})).
\]
Hence our claim follows from the definition of the Artin pairing. Summing up all the contributions, we obtain the relation
\begin{align} 
\label{elrelation}
\sum_{x \in \bar{x}(\varnothing)} \text{Art}_{s + 1, x}(\ell, \chi_a) = \psi(\bar{x}, \chi_a, (\psi_{s + 1}(x))_{x \in \bar{x}(\varnothing) - \{x_0\}})(\text{Frob}(\mathfrak{l})).
\end{align}
Observe that thanks to the last point of Definition \ref{def:l-profitable}, we have that
$$
M(\psi(\bar{x}, \chi_a, (\psi_{s + 1}(x))_{x \in \bar{x}(\varnothing) - \{x_0\}}))/\Q
$$
is unramified at $\ell$. Also observe that
$$
\psi(\bar{x}, \chi_a, (\psi_{s + 1}(x))_{x \in \bar{x}(\varnothing) - \{x_0\}})(\text{Frob}(\mathfrak{l})) = (\psi(\bar{x}, \chi_a, (\psi_{s + 1}(x))_{x \in \bar{x}(\varnothing) - \{x_0\}}) + \chi_{\ell^\ast})(\text{Frob}(\mathfrak{l})),
$$
where $\ell^\ast = -\ell$. Note that $L(\psi')/\Q$ is unramified above $\ell$ for precisely one element of the set
$$
\psi' \in \{\psi(\bar{x}, \chi_a, (\psi_{s + 1}(x))_{x \in \bar{x}(\varnothing) - \{x_0\}}), \psi(\bar{x}, \chi_a, (\psi_{s + 1}(x))_{x \in \bar{x}(\varnothing) - \{x_0\}}) + \chi_{\ell^\ast}\}.
$$ 
Denote that element by $\psi'(\bar{x}, \chi_a, (\psi_{s + 1}(x))_{x \in \bar{x}(\varnothing) - \{x_0\}})$. We now show that the $4$-tuple
\begin{align*}
(\{\chi_{\text{pr}_1(\pi_i(\bar{x})) \cdot \text{pr}_2(\pi_i(\bar{x}))}& : i \in S\} \cup \{\chi_a\}, \{\chi_{\text{pr}_1(\pi_i(\bar{x})) \cdot \text{pr}_2(\pi_i(\bar{x}))} : i \in S\} \cup \{\chi_\ell\}, \\
&\psi'(\bar{x}, \chi_a, (\psi_{s + 1}(x))_{x \in \bar{x}(\varnothing) - \{x_0\}}), (\phi_{\pi_T(\bar{x}), \ell})_{T \subseteq S})
\end{align*}
is R\'edei admissible. Once we have established this, the theorem follows from equation (\ref{elrelation}) and Theorem \ref{tRedei}. We will adopt the notation of Section \ref{sRed}, and denote the two maps in the $4$-tuple by $\psi_1$ and $\psi_2$. 

Observe that $a$ and $\ell$ are coprime in virtue of the fourth point of Definition \ref{def:l-profitable} applied with $T = S$. Since $a$ is positive by Remark \ref{rPositivea}, we therefore conclude that $\text{Ram}(\Q(\sqrt{a})) \cap \text{Ram}(\Q(\sqrt{\ell})) \subseteq \{(2)\}$. Let us now check that $\text{inv}_2(\chi_a \cup \chi_\ell) = 0$.

We distinguish two cases. If $(2)$ ramifies in $\Q(\sqrt{x_0})$, then $\chi_a$ must be in the span of $\chi_{x_0}$ locally at $2$, since $\chi_a$ is a double in the dual class group. Since $\text{Up}_{\Q(\sqrt{x_0})/\Q}(\ell) \in 2\text{Cl}(\Q(\sqrt{x_0}))[4]$, it follows that $\chi_{x_0} \cup \chi_\ell$ is locally trivial everywhere, in particular at $2$. This implies that $\chi_a \cup \chi_\ell$ is certainly locally trivial at $2$. If instead $(2)$ does not ramify in $\Q(\sqrt{x_0})$, then $\chi_a$ must be in the span of $\chi_5$ locally at $2$. Since $\ell \equiv 3 \bmod 4$, we see that $\chi_a \cup \chi_\ell$ is again locally trivial at $2$.

Hence it remains to verify the four conditions of Definition \ref{dRedeiAdmissible}. We first consider the infinite place. Note that $\ell$ is positive and so is $a$ by Remark \ref{rPositivea}. Hence we need to check that $\infty$ splits completely in $M(\psi_1)M(\psi_2)$. For $\psi_1$, this follows from the fact that the field $L(\psi_s(x))$ is totally real for each $x \in \bar{x}(\varnothing)$, in virtue of the equation $2\psi_{s + 1}(x) = \psi_s(x)$. Hence $M(\psi_1)$ is contained in the compositum of totally real fields, and so is totally real. For $\psi_2$, this is explicitly asked in Definition \ref{def:l-profitable}.

Let us now check the fourth condition of Definition \ref{dRedeiAdmissible}. Note that $\psi_2$ is a good expansion map with pointer $\chi_\ell$. Recalling that $\ell \equiv 3 \bmod 4$, we see that the extension $\Q(\sqrt{\ell})/\Q$ is ramified at $2$ and that $\chi_\ell$ vanishes at $\sigma_2(2)$. Furthermore, $\psi_2$ is a good expansion map, which forces the characters $\phi_T(\psi_2)|_{G_{\Q_2}}$ to be in the span of $\chi_5$ for each non-empty subset $\varnothing \subsetneq T \subseteq S$. The first condition of Definition \ref{def:l-profitable} implies that the unique prime of $\Q(\sqrt{\ell})$ above $(2)$ splits completely in $M(\psi_2)/\Q(\sqrt{\ell})$. This implies the triviality of $\phi_T(\psi_2)|_{G_{\Q_2}}$ for each proper non-empty subset $\varnothing \subsetneq T \subsetneq S$. 

We still need to check that the map $\phi_T(\psi_1)|_{G_{\Q_2}}$ is orthogonal to $\chi_\ell$ with respect to the local Hilbert pairing at $2$. Thanks to the third condition of Definition \ref{def:l-profitable}, applied to $x := x_0$, we see that $\chi_{x_0} \cup \chi_\ell$ is locally trivial at all places of $\Q$. Therefore the desired orthogonality follows from the last part of Theorem \ref{tProfitable}, since the character $\chi_5$ is certainly orthogonal to $\chi_\ell$ and so is $\chi_{x_0}$ as we just argued.
 
Let us now show that the third condition of Definition \ref{dRedeiAdmissible} holds. If $p$ is an odd prime not in the cube $\bar{x}$, then $\sigma_p$ is sent to $0$ by any good expansion map. Therefore such a $p$ is unramified in the field of definition of a good expansion map. We therefore derive, as a consequence of Theorem \ref{tProfitable}, that the only odd ramified primes in $L(\psi_1)L(\psi_2)/\Q$ are odd primes in the cube $\bar{x}$. Let $p$ be an odd prime of the shape $\pi_i(\bar{x})$ for some $i \in [r] - S$. We only need to check that $p$ splits completely in $M(\psi_2)/\Q$. But this is explicitly asked in the second condition of Definition \ref{def:l-profitable}. 

We will now consider the prime $\ell$. Since $\ell$ in particular divides the pointer of $\psi_2$, we need to guarantee that it is unramified in $L(\psi_1)$ and splits completely in $M(\psi_1)$. That $\ell$ is unramified in $M(\psi_1)$ follows upon combining equation (\ref{eReconstruct}) with the fourth condition of Definition \ref{def:l-profitable}. By construction of $\psi'$, this ensures that $\ell$ is also unramified in $L(\psi_1)$.

We still have to show that $\ell$ splits completely in $M(\psi_1)/\Q$. This extension is contained in the compositum of $L(\psi_s(x))$ as $x$ runs through $\bar{x}(\varnothing)$. Therefore the equation $2 \cdot \psi_{s + 1}(x) = \psi_s(x)$ proves that $\psi_s(x)$ pairs trivially with $\text{Cl}(\Q(\sqrt{x}))[2]$ and so in particular with $\text{Up}_{\Q(\sqrt{x})/\Q}(\ell)$. Hence $\ell$ is both unramified in $M(\psi_1)$ and has residue field degree $1$, thus $\ell$ must split completely in $M(\psi_1)$.

Take $i \in S$ and $j \in [2]$. We need to check the third condition of Definition \ref{dRedeiAdmissible} for places above $\text{pr}_j(\pi_i(\bar{x}))$. We claim that all places of $\Q(\{\sqrt{\text{pr}_1(\pi_i(\bar{x})) \cdot \text{pr}_2(\pi_i(\bar{x}))} : i \in S\})$ above some $\text{pr}_j(\pi_i(\bar{x}))$ are unramified in $L(\psi_1)L(\psi_2)/\Q(\{\sqrt{\text{pr}_1(\pi_i(\bar{x})) \cdot \text{pr}_2(\pi_i(\bar{x}))} : i \in S\})$. To verify our claim, we use the last statement in Theorem \ref{tProfitable} for $\psi_1$. Then, for $\psi_2$, we remark that $\sigma_{\text{pr}_j(\pi_i(\bar{x}))}$ is sent to an element of order $2$. Therefore the ramification index of $\sigma_{\text{pr}_j(\pi_i(\bar{x}))}$ in $L(\psi_2)$ equals $2$. Since $\text{pr}_j(\pi_i(\bar{x}))$ already ramifies in $\Q(\{\sqrt{\text{pr}_1(\pi_i(\bar{x})) \cdot \text{pr}_2(\pi_i(\bar{x}))} : i \in S\})$, the claim follows. This ends the proof that the $4$-tuple is R\'edei admissible, which, as explained above, ends the proof of part $(a)$. 

Let us now briefly summarize the changes for part $(b)$. Now $-\ell$ is negative so we have to make sure that $a$ is positive, which, again, follows from Remark \ref{rPositivea}. Now $-\ell \equiv 1 \bmod 4$, so we have to examine orthogonality at $2$ only in case $-\ell \equiv 5 \bmod 8$. It follows from the third condition of Definition \ref{def:l-profitable}, applied to $x := x_0$, that $x$ is odd. But then $a$ is also odd, and therefore $\chi_a$ is orthogonal to $\chi_{-\ell}$ under the local Hilbert pairing at $2$.

Finally, we examine the changes in the various splitting conditions. Since $-\ell$ is negative, we need to check that $\infty$ splits completely in $M(\psi_1)$. But this can be proved verbatim as above. The argument for $(2)$ also goes through, except that we need to check that 
\[
\text{inv}_2(\chi_{-\ell} \cup \phi_T(\psi_1)|_{G_{\Q_2}}) = 0
\]
for all $T$. If $\ell \equiv 7 \bmod 8$, this is immediate. If $\ell \equiv 3 \bmod 8$, then we have just argued that $x$ is odd for all $x \in \bar{x}(\varnothing)$. Hence it follows from the last part of Theorem \ref{tProfitable} that the quadratic character $\phi_T(\psi_1)|_{G_{\Q_2}}$ is contained in the span of the set $\{\chi_5, \chi_{-1}\}$. This gives the desired local triviality of the cup product.
\end{proof}

\section{Analytic prerequisites}
\label{sAna}
Throughout the paper our implied constants may depend on $\ell$. We shall not record this dependence. The material in this section is rather similar to \cite[Section 5]{Smith} and \cite[Section 6]{Smith}, but there is one major hurdle to overcome. Indeed, Smith does not prove the analogue of Corollary 6.11 for class groups. To do so, one needs to make the Markov chain analysis of Gerth effective. 

Another significant complicating factor is that the $4$-rank distribution in our family is different due to the fact that $N_d(x, y) = \ell$ is soluble over $\Q$ by assumption. This requires some changes to be made to the Markov chains appearing in Gerth \cite{Gerth}. These problems are dealt with in our companion paper \cite{KP3}.

\subsection{Combinatorial results}
In Section \ref{sMain} we will make essential use of the following two combinatorial results first proven in Smith \cite{Smith} with slightly different notation.

Let $X := X_1 \times \dots \times X_r$, let $S \subseteq [r]$ and let $Z \subseteq X$ with $|\pi_{[r] - S}(Z)| = 1$. We define the $\FF_2$-vector spaces
\[
V := \text{Map}(Z, \FF_2), \quad W := \text{Map}(\text{Cube}_S(Z), \FF_2),
\]
where $\text{Cube}_S(Z)$ is the set of $\bar{x} \in \overline{X}_S$ such that $\bar{x}(\varnothing) \subseteq Z$. We define a linear map $d : V \rightarrow W$, not to be confused with the map $d$ on $1$-cochains, by
\[
dF(\bar{x}) = \sum_{x \in \bar{x}(\varnothing)} F(x),
\]
where the sum has to be taken with multiplicities. This has the effect that
\[
dF(\bar{x}) = 0
\]
as soon as there exists some $i \in S$ with $\text{pr}_1(\pi_i(\bar{x})) = \text{pr}_2(\pi_i(\bar{x}))$ just like in Smith \cite[Definition 4.2]{Smith}. Define $\mathscr{G}_S(Z)$ to be the image of $d$.

\begin{lemma}
\label{lImd}
Let $X = X_1 \times \dots \times X_r$, let $S \subseteq [r]$ and suppose that $|X_i| = 1$ for $i \in [r] - S$. We have that
\[
\dim_{\FF_2} \mathscr{G}_S(X) = \prod_{i \in S} (|X_i| - 1).
\]
\end{lemma}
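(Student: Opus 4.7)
The plan is to exploit the tensor product structure inherent in $d$. \textbf{First}, I would reduce to the case $S = [r]$: the hypothesis $|X_i| = 1$ for $i \in [r] - S$ means that the projections $\pi_i$ on these coordinates are trivial, so $V$, $W$, and $d$ are unaffected by deleting such indices, while the claimed formula only ranges over $i \in S$. Henceforth assume $S = [r]$, set $V_i := \text{Map}(X_i, \FF_2)$, and identify $V = \bigotimes_{i \in [r]} V_i$ via $\delta_x \leftrightarrow \bigotimes_i \delta_{x_i}$.

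\textbf{Second}, introduce the coordinatewise difference maps $d_i : V_i \to W_i := \text{Map}(X_i \times X_i, \FF_2)$ given by $(d_i f)(a, b) = f(a) + f(b)$. The kernel of $d_i$ is the one-dimensional space of constant functions, so $\dim \text{Im}(d_i) = |X_i| - 1$. Then embed $\bigotimes_i W_i \hookrightarrow W$ by sending a pure tensor $g_1 \otimes \cdots \otimes g_r$ to the function $\bar{x} \mapsto \prod_i g_i(\pi_i \bar{x})$; this is injective, since the individual factors $g_i$ can be recovered by evaluating on $\bar{x}$ of product form. I claim that under this embedding $d$ corresponds to $\bigotimes_i d_i$. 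Indeed, on a pure tensor $F = f_1 \otimes \cdots \otimes f_r$, expanding the defining sum with its multiplicities gives
$$dF(\bar{x}) = \sum_{c \in \bar{x}(\emptyset)} \prod_i f_i(c_i) = \prod_{i \in [r]} \bigl(f_i(\text{pr}_1 \pi_i \bar{x}) + f_i(\text{pr}_2 \pi_i \bar{x})\bigr) = \prod_{i \in [r]} (d_i f_i)(\pi_i \bar{x}),$$
where in the degenerate case $\text{pr}_1 \pi_i \bar{x} = \text{pr}_2 \pi_i \bar{x}$ both sides vanish ($2 f_i = 0$ in $\FF_2$ on the left, a zero factor on the right), matching the convention for $dF(\bar{x})$.

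\textbf{Finally}, the image of a tensor product of $\FF_2$-linear maps is the tensor product of the individual images, so
$$\mathscr{G}_S(X) = \text{Im}(d) = \bigotimes_{i \in [r]} \text{Im}(d_i),$$
which has dimension $\prod_{i \in [r]} (|X_i| - 1)$, as required. The only mild obstacle is the bookkeeping of multiplicities in $\bar{x}(\emptyset)$, which must be checked to make the tensor-product identity hold cleanly across the diagonal strata $\text{pr}_1 \pi_i \bar{x} = \text{pr}_2 \pi_i \bar{x}$; once this is in place the argument is immediate.
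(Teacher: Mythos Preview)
Your proof is correct. The paper does not give its own argument but merely cites \cite[Proposition 9.3]{KP}; your tensor-product decomposition of $d$ into the coordinatewise difference maps $d_i$ is a clean self-contained proof, and the handling of the multiplicities on the diagonal strata is exactly what is needed to make the identification $d = \bigotimes_i d_i$ go through. One minor remark: since $S = [r]$ gives $\overline{X}_S = \prod_i (X_i \times X_i)$, the map $\bigotimes_i W_i \to W$ you describe is in fact an isomorphism, not just an embedding, which makes the final identification of images slightly cleaner than you state.
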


\begin{proof}
See \cite[Proposition 9.3]{KP}.
\end{proof}
 
Recall the definition of an additive system given in Definition \ref{dAS}. Given an additive system $\mathfrak{A}$, we set
\[
C(\mathfrak{A}) := \bigcap_{i \in S} \left\{\bar{x} \in \overline{X}_S : \bar{x}(S - \{i\}) \cap \overline{Y}_{S - \{i\}}^\circ(\mathfrak{A}) \neq \varnothing \right\}.
\]
We call an additive system $\mathfrak{A}$ on $X$ $(a, S)$-acceptable if
\begin{itemize}
\item $|A_T(\mathfrak{A})| \leq a$ for all subsets $T$ of $S$;
\item $\bar{x} \in C(\mathfrak{A})$ implies $\bar{x}(\varnothing) \subseteq \overline{Y}_\varnothing^\circ(\mathfrak{A})$.
\end{itemize}
 
\begin{prop}
\label{pdF}
There exists an absolute constant $A > 0$ such that the following holds. Let $r > 0$ be an integer, let $X_1, \dots, X_r$ be finite non-empty sets and let $X$ be their product. Take $S \subseteq [r]$ with $|S| \geq 2$, $|\pi_{[r] - S}(X)| = 1$ and put $n := \textup{min}_{i \in S} |X_i|$. Let $a \geq 2$ and $\epsilon > 0$ be given. Assume that $\epsilon < a^{-1}$ and
\[
\log n \geq A \cdot 6^{|S|} \cdot \log \epsilon^{-1}.
\]
Then there exists $g \in \mathscr{G}_S(X)$ such that for all $(a, S)$-acceptable additive systems $\mathfrak{A}$ on $X$ and for all $F: \overline{Y}_\varnothing^\circ(\mathfrak{A}) \rightarrow \mathbb{F}_2$ satisfying $d F(\bar{x}) = g(\bar{x})$ for all $\bar{x} \in C(\mathfrak{A})$, we have 
\[
\frac{|\overline{Y}_\varnothing^\circ(\mathfrak{A})|}{2} - |X| \cdot \epsilon \leq |F^{-1}(0)| \leq \frac{|\overline{Y}_\varnothing^\circ(\mathfrak{A})|}{2} + |X| \cdot \epsilon.
\]
\end{prop}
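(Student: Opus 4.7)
My approach would follow the strategy of \cite[Proposition 4.8]{Smith}: sample $g \in \mathscr{G}_S(X)$ uniformly at random, and show that with positive probability no $(a,S)$-acceptable additive system $\mathfrak{A}$ admits an $F : \overline{Y}_\emptyset^\circ(\mathfrak{A}) \to \FF_2$ satisfying both $dF = g$ on $C(\mathfrak{A})$ and $\bigl||F^{-1}(0)| - |\overline{Y}_\emptyset^\circ(\mathfrak{A})|/2\bigr| > |X|\epsilon$. Such a $g$ will then exist by the probabilistic method, and the existence statement reduces to verifying the union bound
\[
\Pr_g\bigl[\exists\, \mathfrak{A}, F \text{ violating the bound}\bigr] \;\le\; \sum_{\mathfrak{A}} \sum_{F \text{ bad}} \Pr_g\bigl[dF = g \text{ on } C(\mathfrak{A})\bigr] \;<\; 1.
\]

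Three factors must be estimated. First, for each fixed $\mathfrak{A}$, Hoeffding's inequality yields at most $2^{|V(\mathfrak{A})|+1} \exp(-2|X|^2 \epsilon^2/|V(\mathfrak{A})|)$ bad $F$'s, where $V(\mathfrak{A}) := \overline{Y}_\emptyset^\circ(\mathfrak{A})$; since $|V(\mathfrak{A})| \le |X|$, this is at most $2^{|X|+1}\exp(-2|X|\epsilon^2)$. Second, since $g$ is uniform over $\mathscr{G}_S(X)$, the conditional probability that $dF|_{C(\mathfrak{A})} = g|_{C(\mathfrak{A})}$ is at most $|\pi_{C(\mathfrak{A})}(\mathscr{G}_S(X))|^{-1}$; by the acceptability condition, $\overline{X}_S \setminus C(\mathfrak{A})$ is constrained by the $|A_T| \le a$ bound, and combined with Lemma \ref{lImd} one should obtain $|\pi_{C(\mathfrak{A})}(\mathscr{G}_S(X))| \ge 2^{\prod_{i\in S}(|X_i|-1) - O(a^{|S|}|X|/n)}$, which is essentially $|\mathscr{G}_S(X)|$ itself. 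Third, the number of $(a,S)$-acceptable additive systems must be bounded: iterating the relation (\ref{eFAdd}) shows that $F_T$ is an $\FF_2$-multilinear-type object determined by its values on a small generating set, and the constraint $|A_T| \le a$ then gives a total count of the form $a^{O(\mathrm{poly}(|X|, |S|))}$, which together with the hypothesis $\epsilon < a^{-1}$ enters the bookkeeping benignly.

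\textbf{Main obstacle.} The most delicate step is the second one: controlling the codimension of $\pi_{C(\mathfrak{A})}(\mathscr{G}_S(X))$ requires showing that restriction to a structured large subset of $\overline{X}_S$ does not drop the rank significantly, a linear algebra argument over $\FF_2$ that exploits the explicit structure of $\mathscr{G}_S(X)$ afforded by Lemma \ref{lImd} together with the product-style description of the exceptional set forced by acceptability. The hypothesis $\log n \ge A \cdot 6^{|S|} \log \epsilon^{-1}$ is calibrated precisely so that the Hoeffding savings $\exp(-|X|\epsilon^2)$ dominates the product of the additive-system count and the loss from restricting to $C(\mathfrak{A})$, with the constant $6^{|S|}$ naturally arising from multiplying a $\sim 2^{|S|}$ cost per coordinate in the additive cocycle count against a $\sim 3^{|S|}$ cost from the inclusion-exclusion analysis of the exceptional set. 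Once this is done the union bound collapses below $1$ and produces the desired $g$.
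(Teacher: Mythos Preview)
The paper does not give its own proof: it simply cites \cite[Proposition 4.4]{Smith} and \cite[Proposition 8.7]{KP}. Your proposal correctly identifies the probabilistic strategy used there---sample $g$ uniformly from $\mathscr{G}_S(X)$ and run a union bound over additive systems and bad $F$'s---and you have accurately flagged the codimension of the restriction $\mathscr{G}_S(X) \to \pi_{C(\mathfrak{A})}(\mathscr{G}_S(X))$ as the crux. (Minor point: the paper cites Proposition 4.4 of Smith, not 4.8.)

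One caution on the bookkeeping: your crude count of bad $F$'s as $2^{|X|+1}\exp(-2|X|\epsilon^2)$ paired with the probability bound $2^{-\prod_{i\in S}(|X_i|-1)+\delta}$ leaves a residual factor of order $2^{|X|-\prod(|X_i|-1)} \approx 2^{|X|\cdot |S|/n}$, which is harmless only because $n$ is enormous compared to $1/\epsilon$; but the additive-system count you quote as $a^{O(\mathrm{poly}(|X|,|S|))}$ is too loose as stated---if the exponent were genuinely linear in $|X|$ the bound would not close. In the actual argument one exploits that, thanks to the additivity relation (\ref{eFAdd}), each $F_T$ is determined by its values on cubes anchored at a fixed basepoint in every coordinate, so the effective number of additive systems (more precisely, the number of distinct pairs $(\overline{Y}^\circ_\emptyset, C(\mathfrak{A}))$ that can arise) is controlled by $a^{O(|X|/n)\cdot 2^{|S|}}$ rather than $a^{|X|}$; this is what makes the union bound converge under the stated hypothesis on $n$. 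Your sketch is on the right track but would need this refinement to close.
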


\begin{proof}
This is Proposition 4.4 in Smith \cite{Smith} and reproven in a slightly more general setting in Proposition 8.7 of \cite{KP}.
\end{proof}

\subsection{Prime divisors}
Let $\ell$ be an integer such that $|\ell|$ is a prime congruent to $3$ modulo $4$. In Theorem \ref{tMain} we are only interested in those squarefree integers $d > 0$ with the properties
\begin{align}
\label{ed}
\ell \mid d, \quad p \mid \frac{d}{\ell} &\text{ implies } \left(\frac{\ell}{p}\right) = 1 \text{ or } p = 2 \\
\label{ed2}
&\left(\frac{-d/\ell}{|\ell|}\right) = 1.
\end{align}
Indeed, this is equivalent to $\ell \mid d$ and the solubility of the equation
\[
x^2 - dy^2 = \ell \text{ in } x, y \in \Q.
\]
We remark that equation (\ref{ed2}) is equivalent to a set of congruence conditions for $d$ modulo $8$. Hence we need to insert congruence conditions in Section 5 of Smith \cite{Smith}. This has already been done in Section 10 of \cite{KP} for squarefree integers $d$ such that $p \mid d$ implies $p \equiv 0,1 \bmod \ell$, and in Section 4 of \cite{CKMP} with completely different techniques for squarefree integers $d$ such that $p \mid d$ implies $p \equiv 1, 2 \bmod 4$. Both these techniques are straightforward to generalize to obtain the following results, here we shall follow \cite[Section 5]{Smith}.

Define
\[
S(N, \ell) := \{1 \leq d < N : d \text{ squarefree and satisfies equation (\ref{ed}) and (\ref{ed2})}\} 
\]
and
\[
S_r(N, \ell) := \{d \in S(N, \ell) : \omega(d) = r\}.
\]
We list the distinct prime divisors of $d$ as $p_1 < p_2 < \dots < p_r$. With these notations we can state our next theorem.

\begin{theorem}
\label{tSpacing}
Fix an integer $\ell$ such that $|\ell|$ is a prime congruent to $3$ modulo $4$. Then there are constants $A_1, A_2 > 0$, depending only on $\ell$, such that
\begin{align}
\label{eSrNl}
\frac{A_1N}{\log N} \cdot \frac{\mu^{r - 1}}{(r - 1)!} \leq |S_r(N, \ell)| \leq \frac{A_2N}{\log N} \cdot \frac{\mu^{r - 1}}{(r - 1)!} 
\end{align}
for all $1 \leq r \leq 200\mu$ and all $N > A_2$, where we put $\mu := \frac{1}{2} \log \log N$. Furthermore, we have
\begin{align}
\label{eErdosKac}
\frac{|\{d \in S(N, \ell) : |\omega(d) - \mu| > \mu^{2/3}\}|}{|S(N, \ell)|} \ll \exp\left(-\frac{1}{3}\mu^{1/3}\right).
\end{align}
Now assume that $r$ is such that
\begin{align}
\label{errange}
|r - \mu| \leq \mu^{2/3}
\end{align}
and take $D_1 > 3$ and $C_0 > 1$. In this case we have
\begin{enumerate}
\item[(i)] the bound
\[
1 - \frac{|\{d \in S_r(N, \ell) : 2D_1 < p_i < p_{i + 1}/2 \textup{ for all } p_i > D_1\}|}{|S_r(N, \ell)|} \ll \frac{1}{\log D_1} + \frac{1}{(\log N)^{1/4}};
\]
\item[(ii)] the bound
\begin{multline*}
1 - \frac{\left|\left\{d \in S_r(N, \ell) : \left|\frac{1}{2} \log \log p_i - i\right| < C_0^{1/5} \max(i, C_0)^{4/5} \textup{ for all } i < \frac{1}{3}r\right\}\right|}{|S_r(N, \ell)|} \\
\ll \exp(-kC_0)
\end{multline*}
for some absolute constant $k$;
\item[(iii)] the bound
\begin{multline*}
\frac{\left|\left\{d \in S_r(N, \ell) : \frac{\log p_i}{\log \log p_i} \leq (\log \log \log N)^{1/2} \cdot \sum_{j = 1}^{i - 1} \log p_j \textup{ for all } \frac{1}{2}r^{1/2} < i < \frac{1}{2}r\right\}\right|}{|S_r(N, \ell)|} \\
\ll \exp\left(-(\log \log \log N)^{1/4}\right).
\end{multline*}
\end{enumerate}
\end{theorem}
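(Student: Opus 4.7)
The plan is to adapt the argument of \cite[Section 5]{Smith} to accommodate the forced prime factor $l$ and the congruence conditions (\ref{ed}) and (\ref{ed2}). Writing $d = l \cdot m$ with $m$ squarefree, coprime to $l$, and $\omega(m) = r - 1$, condition (\ref{ed}) forces every odd prime $p \mid m$ to satisfy $\left(\frac{l}{p}\right) = 1$. By quadratic reciprocity this pins $p$ to a union of residue classes modulo $4|l|$ of total density $1/2$ among the odd primes. Condition (\ref{ed2}), together with the sign condition $d > 0$, imposes finitely many admissible residue classes for $m$ modulo $8l$. Let $\mathcal{P}$ denote the resulting set of admissible primes and $\mathcal{R}$ the set of admissible residue classes.

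For the count (\ref{eSrNl}) I would apply the Selberg--Delange method, restricted to squarefree $m < N/l$ having all prime factors in $\mathcal{P}$ and with $lm \bmod 8l \in \mathcal{R}$. Since $\mathcal{P}$ has density $1/2$ among the primes, the natural parameter is $\mu = \frac{1}{2}\log \log N$, and the Landau--Selberg--Delange machinery yields, uniformly in $r$ with $1 \leq r \leq 200\mu$, bounds of the shape $\frac{N}{\log N} \cdot \frac{\mu^{r-1}}{(r-1)!}$ up to multiplicative constants that may depend on $l$ but not on $r$ or $N$. The Erd\H{o}s--Kac type concentration (\ref{eErdosKac}) then follows from the Tur\'an--Kubilius inequality applied to the additive function that counts the number of $\mathcal{P}$-prime factors of $m$; indeed $\omega(d) - 1 = \omega(m)$ has mean $\mu$ and variance $\mu(1 + o(1))$ on the family $S(N,l)$, and standard moment estimates give the claimed super-polynomial decay beyond $\mu^{2/3}$.

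The three spacing statements (i)--(iii) are proved in \cite[Section 5]{Smith} for unrestricted squarefree integers, and their proofs rely only on the Mertens-type identity
\[
\sum_{\substack{p \leq x \\ p \in \mathcal{P}}} \frac{1}{p} = \frac{1}{2} \log \log x + O(1)
\]
together with an independence heuristic for the placement of the prime factors inside the logarithmic scale. Transcribing those arguments to our setting amounts to restricting every prime sum to $\mathcal{P}$ and splitting according to residue classes in $\mathcal{R}$; the extra multiplicative factors introduced are uniformly bounded and get absorbed into the implicit constants. Uniformity in $r$ throughout the range $|r - \mu| \leq \mu^{2/3}$ is preserved because the Selberg--Delange error terms remain dominated by the main term across this range.

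The main obstacle I anticipate is careful bookkeeping of the interactions between the forced prime $l$, the special prime $p = 2$ (which by (\ref{ed}) is permitted to divide $d/l$ without any Legendre constraint), and the mod-$8$ conditions coming from (\ref{ed2}). These have to be handled by a finite case split at the outset, separating off the question of whether $2 \mid m$ and fixing a residue class for $m$ modulo $8l$, so that the subsequent sieve runs cleanly on a density-$1/2$ set of odd primes. Once this splitting is in place, (i)--(iii) become essentially mechanical adaptations of their counterparts in \cite[Section 5]{Smith}, and the congruential version of the Selberg--Delange asymptotic, carried out in detail in \cite[Section 10]{KP} and \cite[Section 4]{CKMP} for closely related families, provides the template for the uniform count (\ref{eSrNl}).
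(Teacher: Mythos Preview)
Your outline is correct in spirit and would lead to a valid proof, but the route differs from the paper's in two places worth noting.

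For the count (\ref{eSrNl}), the paper does not invoke Selberg--Delange. Instead it follows Smith's grid method directly: one defines, for each residue class $a \in (\Z/8\Z)^\ast$, the partial Mertens sum $F_a(x) = \sum_{p \leq x,\ (l/p)=1,\ p \equiv a \bmod 8} 1/p$, stratifies tuples $(p_1,\dots,p_r)$ by the vector $(a_1,\dots,a_r)$ of their residues modulo $8$, and compares the resulting weighted count against the volume integral $I_r$ exactly as in \cite[Proposition~5.5]{Smith}. The condition (\ref{ed2}) then selects the admissible vectors $(a_1,\dots,a_r)$ and one sums over those. The advantage of staying inside Smith's framework is that the machinery proving (i)--(iii) is built on top of this same grid comparison, so no separate translation is needed; your approach requires invoking Selberg--Delange for (\ref{eSrNl}) and then separately adapting Smith's spacing arguments, which works but is less economical.

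For (\ref{eErdosKac}), Tur\'an--Kubilius alone only gives decay of order $\mu^{-1/3}$ via Chebyshev, not the exponential bound stated; your appeal to ``standard moment estimates'' would have to mean something like an exponential moment or a large-deviation argument. The paper sidesteps this by deducing (\ref{eErdosKac}) directly from (\ref{eSrNl}): once one has uniform two-sided bounds on $|S_r(N,l)|$ of Poisson shape for all $r \leq 200\mu$, the Poisson tail bound $\sum_{|r - \mu| > \mu^{2/3}} \mu^{r-1}/(r-1)! \ll e^{-\mu^{1/3}/3}$ gives the claim immediately, with the range $r > 200\mu$ handled by the trivial divisor bound. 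This is cleaner than a separate moment computation.
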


\begin{proof}
Condition (\ref{ed}) is incorporated in Smith's argument just as in \cite{KP} by inserting a congruence condition in the definition of $F(x)$ \cite[p.\ 51]{Smith}. To deal with the congruence condition (\ref{ed2}), we simply impose further congruence conditions on the primes for each invertible residue class in $(\Z/8\Z)^\ast$ and then sum up the contributions. 

More explicitly, we define for a congruence class $a \in (\Z/8\Z)^\ast$ the sum
\[
F_a(x) = \sum_{\substack{p \leq x \\ (\ell/p) = 1 \\ p \equiv a \bmod 8}} \frac{1}{p}.
\]
Then there exist constants $B_a$, $A$, $c > 0$ such that
\[
\left|F_a(x) - \frac{\log \log x}{8} - B_a\right| \leq A \cdot e^{-c \sqrt{\log x}}.
\]
Let $(a_1, \dots, a_r) \in {(\Z/8\Z)^\ast}^r$ be a vector. Then for $T$ any set of tuples of primes $(p_1, \dots, p_r)$ of length $r$ with $p_i \equiv a_i \bmod 8$ and $(\ell/p) = 1$, we define the grid
\[
\text{Grid}(T) = \bigcup_{(p_1, \dots, p_r) \in T} \prod_{1 \leq i \leq r} \left[8 \cdot \left(F_{a_i}(p_i) - \frac{1}{p_i} - B_{a_i}\right), 8 \cdot \left(F_{a_i}(p_i) - B_{a_i}\right)\right].
\]
Following the proof of Smith \cite[p.\ 52]{Smith}, we compare the quantity
\[
\sum_{\substack{p_1 \cdot \ldots \cdot p_r < N \\ (\ell/p) = 1 \\ p_i \equiv a_i \bmod 8}} \frac{8^r}{p_1 \cdot \ldots \cdot p_r}
\]
against the integral $I_r$ (see \cite[p.\ 42]{Smith} for the definition of the integral $I_r$) for each vector $(a_1, \dots, a_r) \in {(\Z/8\Z)^\ast}^r$. Then equation (\ref{eSrNl}) follows from a version of \cite[Proposition 5.5]{Smith} for the set of squarefree integers $d$ satisfying equation (\ref{ed}) and the condition $p_i \equiv a_i \bmod 8$ after summing over all possible vectors $(a_1, \dots, a_r) \in {(\Z/8\Z)^\ast}^r$ such that $x^2 - a_1 \cdot \ldots \cdot a_ry^2 = \ell$ is soluble in $\Q_2$.

The assertion (\ref{eErdosKac}) is deduced from equation (\ref{eSrNl}), from standard bounds on the tails of the Poisson distribution and from a good bound for the number of integers with more than $100 \log \log N$ prime divisors. Such a bound follows immediately when one computes the average of $\tau(n)$. The claims (i), (ii) and (iii) are a straightforward generalization of the material in Section 5 of Smith \cite{Smith}.
\end{proof}

\subsection{Equidistribution of Legendre symbol matrices}
\label{ssLegendre}
In this subsection we state several equidistribution results pertaining to matrices of Legendre symbols. These results are straightforward modifications of the material in \cite{CKMP, Smith}. We start with two definitions.

\begin{mydef}
\label{dSiegel}
Suppose that $\ell$ is a non-zero integer. A prebox is a pair $(X, P)$ satisfying
\begin{itemize}
\item $P$ consists entirely of prime numbers such that the images of $P$, $\ell$ and $-1$ are linearly independent in $\frac{\Q^\ast}{\Q^{\ast2}}$;
\item $X = X_1 \times \dots \times X_r$, where each $X_i$ consists entirely of prime numbers with $X_i \cap P = \varnothing$;
\item there exists a sequence of real numbers
\[
0 < s_1 < t_1 < s_2 < t_2 < \dots < s_r < t_r
\]
such that every prime $p \in X_i$ satisfies $s_i < p < t_i$ and $(\ell/p) = 1$.
\end{itemize}
Define the (potentially infinite) sequence $d_1, d_2, \ldots$ as in Definition 6.2 of Smith \cite{Smith}. Then we have $d_i^2 < |d_{i + 1}|$. We say that $(X, P)$ is Siegel-less above $t$ if for all $x \in X$ we have that $d_i \mid \ell x \prod_{p \in P} p$ implies $|d_i| < t$.
\end{mydef}

\begin{mydef}
Write $A \sqcup B$ for the disjoint union of two sets $A$ and $B$. Let $(X, P)$ be a prebox. Put
\[
M := \{(i, j) : 1 \leq i < j \leq r\}, \quad M_P := [r] \times (P \sqcup \{-1\}).
\]
Let $\mathcal{M} \subseteq M$ and let $\mathcal{N} \subseteq M_P$. Given a map $a : \mathcal{M} \sqcup \mathcal{N} \rightarrow \{\pm 1\}$, we define $X(a)$ to be the subset of tuples $(x_1, \ldots, x_r) \in X$ with
\[
\left(\frac{x_i}{x_j}\right) = a(i, j) \textup{ for all } (i, j) \in \mathcal{M}, \quad \left(\frac{p}{x_i}\right) = a(i, p) \textup{ for all } (i, p) \in \mathcal{N}.
\]
\end{mydef}

Ideally we would like to show that every $X(a)$ is of the expected size. Although we are not able to prove this in full generality, we will prove slightly weaker results that still suffice for our application. 

\begin{prop}
\label{p6.3}
Let $\ell$ be a non-zero integer. For every choice of positive constants $c_1, \dots, c_8$ satisfying $c_3 > 1$, $c_5 > 3$ and
\[
\frac{1}{8} > c_8 + \frac{c_7 \log 2}{2} + \frac{1}{c_1} + \frac{c_2c_4}{2},
\]
there exists a constant $A$ such that the following holds. 

Let $A < t < s_1$ and suppose that $(X, P)$ is a prebox that is Siegel-less above $t$. Let $\mathcal{M} \subseteq M$ and let $\mathcal{N} \subseteq M_P$. Let $1 \leq k \leq r$ be an integer such that $(i, p) \in \mathcal{M}$ implies $i > k$. Furthermore, if $i > k$, we assume that $X_i$ equals the set of primes in $s_i < q < t_i$ satisfying
\[
\left(\frac{\ell}{q}\right) = 1 \textup{ and } \left(\frac{p}{q}\right) = a(i, p) \textup{ for all } (i, p) \in \mathcal{N}.
\]
Finally, assume that
\begin{itemize}
\item[(i)] $p \in P$ implies $p < s_1$ and $|P| \leq \log t_i - i$ for all $1 \leq i \leq r$;
\item[(ii)] $\log t_k < t_1^{c_2}$ and if $k < r$, we assume that $\log t_{k + 1} > \max((\log t_1)^{c_5}, t^{c_6})$;
\item[(iii)] we assume that for all $1 \leq i \leq r$
\[
|X_i| \geq \frac{2^{c_3i} \cdot t_i}{(\log t_i)^{c_4}};
\]
\item[(iv)] $r^{c_1} < t_1$;
\item[(v)] putting $j_i := i - 1 + \lfloor c_7 \log t_i \rfloor$, we assume that $j_1 > k$. Furthermore, $j_i \leq r$ implies
\[
(\log t_i)^{c_5} < \log t_{j_i}.
\]
\end{itemize}
Then we have for all $a : \mathcal{M} \sqcup \mathcal{N} \rightarrow \{\pm 1\}$
\[
\left||X(a)| - \frac{|X|}{2^{|\mathcal{M}|}}\right| \leq t_1^{-c_8} \cdot \frac{|X|}{2^{|\mathcal{M}|}}.
\]
\end{prop}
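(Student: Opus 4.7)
The plan is to adapt Smith's proof of \cite[Proposition 6.3]{Smith} closely, with the minor modifications needed to accommodate the auxiliary primes $P$, the integer $l$, and the associated $2$-adic congruence conditions. These adjustments are of the same flavour as those already carried out in \cite[Section 4]{CKMP} and \cite[Section 10]{KP}. Both parts of the proposition rest on a common two-step mechanism: detect each Legendre symbol condition by a quadratic character, and then bound the resulting multilinear character sums using an effective Siegel--Walfisz estimate made available by the Siegel-less hypothesis.

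For the first inequality, I would begin by writing each indicator $\mathbf{1}_{(\alpha/\beta)=\epsilon}$ as $\tfrac{1}{2}(1+\epsilon(\alpha/\beta))$, expanding the product over the conditions indexed by $\mathcal{M}$ (together with the portion of $\mathcal{N}$ not already built into the $X_i$ for $i>k$), and isolating the trivial-character contribution as the main term $|X|/2^{|\mathcal{M}|}$. Every other summand is a multilinear character sum, which I would treat by induction on the largest index $i^\ast$ involved. After freezing all $x_j$ for $j \neq i^\ast$, quadratic reciprocity rewrites the inner sum over $x_{i^\ast}\in X_{i^\ast}$ as a sum over primes in $(s_{i^\ast},t_{i^\ast})$ of a non-trivial real Dirichlet character of conductor dividing $4 l \prod_{p\in P} p \prod_{j\neq i^\ast} x_j$. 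The Siegel-less hypothesis forbids exceptional zeros of the corresponding $L$-function below the threshold $t$, which (by the effective Siegel--Walfisz input, as in \cite[Proposition 6.8]{Smith}) yields a saving $|X_{i^\ast}|/(\log t_{i^\ast})^K$ for any fixed $K$. Iterating outward from $i=r$ down to the smallest index appearing in the summand, and using the spacing hypotheses (ii) and (v) together with the density lower bound (iii) on $|X_i|$, the total error contribution is $\ll t_1^{-c_8}\cdot |X|/2^{|\mathcal{M}|}$ provided $c_1,\dots,c_8$ obey the stated inequality.

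For the second inequality, I would run a second-moment argument. Set $\Sigma_u(Q):=|X_u(a,Q)|-|X_u|/2^{|V|}$, expand $\Sigma_u(Q)^2$, and sum over $Q\in \pi_V(X)$. The result is a double character sum which may be interpreted as an instance of the first statement applied to the ``doubled'' prebox obtained from $X$ by replicating the $u$-th coordinate. This yields
\[
\sum_{Q\in \pi_V(X)} \Sigma_u(Q)^2 \;\ll\; \frac{|X_u|^2 \cdot |\pi_V(X)|}{t_1^{2/c_1+2c_8}},
\]
and a Chebyshev--Markov inequality followed by a union bound over $u\in U$ (noting $|U|\le r$ and the lower bound $\log\log s_u > \tfrac{1}{5}\max(r,\log\log t_r)$) then limits the weighted mass of poor $Q$ by the claimed quantity $r\cdot |X|/(t_1^{c_8}\cdot 2^{|\mathcal{M}|})$.

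The main technical obstacle is securing the Siegel--Walfisz estimate with uniformity in the conductor and the twisting characters, and with a power saving large enough to dominate the exponentially many error terms produced by the expansion. This is precisely the role of the Siegel-less hypothesis: by construction, no quadratic $L$-function of conductor dividing $lx\prod_{p\in P} p$ has an exceptional zero below $t$, which converts the otherwise ineffective Siegel-type bound into the effective power saving $(\log t_{i^\ast})^{-K}$ that can be summed uniformly over the family. The remainder of the argument is essentially a bookkeeping exercise, with the elaborate system of inequalities among $c_1,\dots,c_8$ tuned so that the per-variable savings consistently outweigh the combinatorial growth of error summands and the loss coming from the conductor.
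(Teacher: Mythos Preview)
Your proposal is correct and follows exactly the route the paper indicates: the paper's own proof consists of a single sentence stating that this is a straightforward generalization of \cite[Proposition 6.3]{Smith} and \cite[Proposition 5.10]{CKMP}, and your outline (character expansion, iterated Siegel--Walfisz via the Siegel-less hypothesis, and a second-moment/Chebyshev argument for the poor-$Q$ bound) is precisely the content of those referenced proofs with the cosmetic changes for $l$ and $P$ inserted.
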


\begin{proof}
This is a straightforward generalization of \cite[Proposition 6.3]{Smith} and \cite[Proposition 5.7]{CKMP}.
\end{proof}

Our next proposition deals with the small primes but at the cost of introducing permutations. We define $\mathcal{P}(k_2)$ to be the set of permutations $\sigma : [r] \rightarrow [r]$ that fix every $k_2 < i \leq r$. Furthermore, if $a: M \sqcup M_P \rightarrow \{\pm 1\}$, we define $\sigma(a)$ to be
\[
\sigma(a)(i, j) = a(\sigma(i), \sigma(j)), \quad \sigma(a)(i, p) = a(\sigma(i), p).
\]
Here we use the convention that for $i > j$
\[
a(i, j) := a(j ,i) \cdot (-1)^{\frac{a(i, -1) - 1}{2} \cdot \frac{a(j, -1) - 1}{2}}.
\]

\begin{prop}
\label{p6.4}
Let $\ell$ be a non-zero integer. For all choices of positive constants $c_1, \dots, c_{12}$ satisfying $c_3 > 1$, $c_5 > 3$ and
\[
\frac{1}{8} > c_8 + \frac{c_7 \log 2}{2} + \frac{1}{c_1} + \frac{c_2c_4}{2}, \quad c_{10} \log 2 + 2c_{11} + c_{12} < 1 \quad \textup{and} \quad c_{12} + c_{11} < c_9,
\]
there exists a constant $A$ such that the following holds. 

Let $A < t$ and suppose that $(X, P)$ is a prebox that is Siegel-less above $t$ such that $X_i$ equals the set of primes $p$ in the interval $(s_i, t_i)$ satisfying $(\ell/p) = 1$. Let $k_0, k_1, k_2$ be integers such that $0 \leq k_0 < k_1 < k_2 \leq r$. We assume that
\begin{itemize}
\item[(i)] $p \in P$ implies $p < s_{k_0 + 1}$ and $|P| \leq \log t_i - i$ for all $i > k_0$;
\item[(ii)] $\log t_{k_1} < t_{k_0 + 1}^{c_2}$ and $\log t_{k_1 + 1} > \max((\log t_{k_0 + 1})^{c_5}, t^{c_6})$;
\item[(iii)] for all $i > k_0$
\[
|X_i| \geq \frac{2^{|P| + c_3i} \cdot k_2^{c_9} \cdot t_i}{(\log t_i)^{c_4}};
\]
\item[(iv)] $r^{c_1} < t_{k_0 + 1}$;
\item[(v)] we assume that $k_1 - k_0 < c_7 \log t_{k_0 + 1}$. Furthermore, $i > k_0$ and $i - 1 + \lfloor c_7 \log t_i\rfloor \leq j \leq r$ implies
\[
(\log t_i)^{c_5} < \log t_j;
\]
\item[(vi)] $k_2 > A$ and $s_{k_0 + 1} > t$;
\item[(vii)] $c_{10} \log k_2 > |P| + k_0$ and $c_{11} \log k_2 > \log k_1$.
\end{itemize}
Then we have
\[
\sum_{a : M \sqcup M_P \rightarrow \{\pm 1\}} \left|2^{-|M \sqcup M_P|} \cdot k_2! \cdot |X| - \sum_{\sigma \in \mathcal{P}(k_2)} |X(\sigma(a))|\right| \leq \left(k_2^{-c_{12}} + t_{k_0 + 1}^{-c_8}\right) \cdot k_2! \cdot |X|.
\]
\end{prop}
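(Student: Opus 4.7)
\textbf{Proof plan for Proposition \ref{p6.4}.} The plan is to reduce the statement to Proposition \ref{p6.3} by conditioning on the values attained by primes of ``large'' index and then exploiting the symmetrization provided by averaging over $\mathcal{P}(k_2)$. Split $[r]$ as $[k_0]\sqcup (k_0,k_1]\sqcup (k_1,k_2]\sqcup (k_2,r]$. The first block consists of auxiliary small primes controlled by condition (i); the last block is large enough that Proposition \ref{p6.3} applies directly (with its parameter $k$ taken equal to $k_1$). The middle blocks contain primes which are individually too small for Proposition \ref{p6.3}, and this is why one passes to the averaged quantity $\sum_{\sigma\in\mathcal{P}(k_2)}|X(\sigma(a))|$.

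First I would apply Proposition \ref{p6.3} to the subbox $X_{k_1+1}\times\cdots\times X_r$ conditionally on the choice of tuple $Q=(x_1,\ldots,x_{k_1})\in \prod_{i\le k_1}X_i$. Hypotheses (ii), (iii), (v) of the proposition transcribe into hypotheses (ii)--(v) of Proposition \ref{p6.3} with $k=k_1$, and the Siegel-less hypothesis is preserved. This yields equidistribution, up to an error $t_{k_0+1}^{-c_8}$, of the Legendre symbols $(x_i/x_j)$ and $(p/x_i)$ for $i>k_1$, uniformly as $Q$ varies, plus the ``good $Q$'' statement (the second assertion of Proposition \ref{p6.3} with $U=(k_1,r]$ and $V=[k_1]$). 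Thus for all but a proportion $\ll r\,t_{k_0+1}^{-c_8}$ of tuples $x\in X$, the Legendre profile of $x$ above $k_1$ behaves uniformly.

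It remains to control the symmetrization among indices in $[k_2]$. After restriction to the small coordinates, the data attached to each $x\in X$ is the function $\ell_x:[k_2]\to \FF_2^{P\sqcup\{-1\}\sqcup[k_0]\sqcup(k_1,k_2]}$ recording the Legendre symbols of $x_i$ with the primes in $P$, with $-1$, with the fixed auxiliary primes (which are trivial up to $k_0$), and with the primes above $k_1$. For $a:M\sqcup M_P\to\{\pm1\}$ write $a^{(k_2)}$ for its restriction to pairs involving only indices $\le k_2$. Then
\[
\sum_{\sigma\in\mathcal{P}(k_2)}|X(\sigma(a))|=\sum_{x\in X}\bigl|\{\sigma\in\mathcal{P}(k_2): \sigma^{-1}\cdot\ell_x=\ell_a\}\bigr|\cdot \mathbf{1}[\text{large coords match}],
\]
and averaging the inner count over $a^{(k_2)}$ reduces to counting permutations sending one function in $(\FF_2^{k_0+|P|+1+(k_2-k_1)})^{[k_2]}$ to another. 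The alphabet size is $2^{|P|+k_0+1}\cdot 2^{k_2-k_1}$, and conditions (vi), (vii) (in particular $c_{10}\log k_2>|P|+k_0$ and $c_{11}\log k_2>\log k_1$) guarantee that $k_2$ dominates this alphabet times $k_1$ comfortably, so that the multiset of values of $\ell_x$ on $[k_2]$ is typically close to uniform across the alphabet. A standard urn model computation, identical in structure to \cite[Proposition 6.4]{Smith} and \cite[Proposition 5.11]{CKMP}, then shows that the $\ell^1$-discrepancy in $a$ of $\sum_\sigma|X(\sigma(a))|$ from its average $2^{-|M\sqcup M_P|}k_2!|X|$ is bounded by $k_2^{-c_{12}}\cdot k_2!|X|$, using the inequality $c_{12}+c_{11}<c_9$ to absorb the growth factor $k_2^{c_9}$ from hypothesis (iii).

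The main obstacle I anticipate is combining the two error sources cleanly. The equidistribution from Proposition \ref{p6.3} is uniform in $a$ but degrades on a sparse ``poor'' set of large-coordinate tuples; the permutation-averaging argument produces uniform distribution on $a^{(k_2)}$ but assumes sufficiently generic Legendre profiles on small coordinates. The coupling between the two requires the ``good $Q$'' second statement of Proposition \ref{p6.3} to guarantee that, for most large-coordinate tuples, the small-coordinate factor $|X_u(a,Q)|$ is close to its average, and this is precisely where the arithmetic condition $\log\log s_u>\tfrac{1}{5}\max(r,\log\log t_r)$ from Proposition \ref{p6.3} must be verified in the present setup. Once the two estimates are combined by triangle inequality and the hypothesis $c_{10}\log 2 + 2c_{11}+c_{12}<1$ is used to absorb the combinatorial factors, the desired bound $(k_2^{-c_{12}}+t_{k_0+1}^{-c_8})k_2!|X|$ follows.
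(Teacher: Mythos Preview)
Your proposal is correct and takes essentially the same approach as the paper: the paper's own proof is the single line ``This is a straightforward generalization of \cite[Proposition 6.4]{Smith}'', and what you have written is precisely a sketch of how that generalization goes, reducing to Proposition~\ref{p6.3} for the large indices and handling the small indices via the permutation-averaging urn argument of Smith. You even cite the same sources (\cite[Proposition 6.4]{Smith} and the corresponding result in \cite{CKMP}) that the paper points to.
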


\begin{proof}
This is a straightforward generalization of \cite[Theorem 6.4]{Smith} and \cite[Theorem 5.9]{CKMP}.
\end{proof}

\subsection{Boxes}
We now define boxes. Boxes are product spaces of the shape $X := X_1 \times \dots \times X_r$, where $X_1, \dots, X_r$ are ``nice'' sets of primes. Then we state an important proposition that allows us to transition from squarefree integers to boxes.

\begin{mydef}
Let $\ell$ be such that $|\ell|$ is a prime $3$ modulo $4$. Suppose that $D_1 > \max(100, |\ell|)$ is a real number and let $1 \leq k \leq r$ be integers. Let $\mathfrak{t} := (p_1, \dots, p_k, t_{k + 1}, \dots, t_r)$ be a tuple satisfying the following properties
\begin{itemize}
\item the $p_i$ are prime numbers satisfying $p_1 < \dots < p_k < D_1$ and the $t_j$ are real numbers with $D_1 < t_{k + 1} < \dots < t_r$;
\item we have $|\ell| \in \{p_1, \dots, p_k\}$ and we have for all $i = 1, \dots, k$ that $\gcd(2 \ell, p_i) > 1$ or $\left(\frac{\ell}{p_i}\right) = 1$.
\end{itemize}
To $\mathfrak{t}$ we associate a box $X := X_1 \times \dots \times X_r$ as follows: we set $X_i := \{p_i\}$ for $1 \leq i \leq k$, while for $i > k$ we let $X_i$ be the set of prime numbers $p$ with $\left(\frac{\ell}{p}\right) = 1$ in the interval
\[
\left(t_i, \left(1 + \frac{1}{e^{i - k} \log D_1}\right) \cdot t_i\right).
\]
\end{mydef}

Note that for $\ell = -1$ this is the same as Definition 5.12 in \cite{CKMP}. Furthermore, we can turn any box into a prebox by removing $\{|\ell|\}$ and taking $P = \varnothing$. We define
\[
S^\ast(N, \ell) := \{1 \leq d < N : d \text{ squarefree and satisfies equation (\ref{ed})}\} 
\]
and
\[
S_r^\ast(N, \ell) := \{d \in S^\ast(N, \ell) : \omega(d) = r\}.
\]
Then there is a natural injective map $i: X \rightarrow S_r^\ast(\infty, \ell)$, which is a superset of $S_r(\infty, \ell)$. Hence it makes sense to speak of the intersection $i(X) \cap V$ for $V$ a subset of $S_r(\infty, \ell)$. We can now state our analogue of Proposition 6.9 in Smith \cite{Smith}.

\begin{prop}
\label{pRed}
Take $\ell$ to be an integer such that $|\ell|$ is a prime $3$ modulo $4$. Let $N \geq D_1 > \max(100, |\ell|)$ and $\log \log N \geq 2 \log \log D_1$. Take any $r$ satisfying equation (\ref{errange}). Let $V, W$ be subsets of $S_r(N, \ell)$ with the additional requirement that
\[
W \subseteq \{d \in S_r(N, \ell) : 2D_1 < p_i < p_{i + 1}/2 \textup{ for all } p_i > D_1\}.
\]
Take any $\epsilon > 0$ with
\[
|W| > (1 - \epsilon) \cdot |S_r(N, \ell)|.
\]
Assume that there exists a real number $\delta > 0$ such that for all boxes $X$ with $i(X) \subseteq S_r^\ast(N, \ell)$ and $i(X) \cap W \neq \varnothing$ we have
\begin{align}
\label{eBoxControl}
(\delta - \epsilon) \cdot |i(X) \cap S_r(N, \ell)| \leq |i(X) \cap V| \leq (\delta + \epsilon) \cdot |i(X) \cap S_r(N, \ell)|.
\end{align}
Then
\[
|V| = \delta \cdot |S_r(N, \ell)| + O\left(\left(\epsilon + \frac{1}{\log D_1}\right) \cdot |S_r(N, \ell)|\right).
\]
\end{prop}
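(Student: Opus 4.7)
The plan is to cover $W$ by a disjoint collection of boxes, apply the hypothesis (\ref{eBoxControl}) to each one, and then pass to $|V|$ using the control on $|S_r(N, l) - W|$.

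First I would build the covering. Given $d \in W$, write the prime factorization as $p_1 < \cdots < p_k < D_1 < p_{k + 1} < \cdots < p_r$, so that $k = k(d)$ and $|l| \in \{p_1, \ldots, p_k\}$ since $D_1 > |l|$. For $i > k$ set $\rho_i := 1 + 1/(e^{i - k} \log D_1)$ and let $\mathcal{T}_i := \{D_1 \rho_i^j : j \geq 0\}$. Let $t_i \in \mathcal{T}_i$ be the unique grid point with $p_i \in (t_i, \rho_i t_i]$. Associate to $d$ the box $X(d)$ with $X_i = \{p_i\}$ for $i \leq k$ and $X_i := \{p \textup{ prime} : (l/p) = 1, \, p \in (t_i, \rho_i t_i)\}$ for $i > k$. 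The spacing $2 D_1 < p_i < p_{i + 1}/2$ and the arithmetic conditions (\ref{ed}), (\ref{ed2}) inherited from $d \in W$ ensure that the resulting tuple is a valid box in the sense of the paper, and clearly $d \in i(X(d))$.

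The key property is disjointness: if $X := X(d)$ and $X' := X(d')$ are distinct, then $i(X) \cap i(X') = \emptyset$. Indeed, either the small-prime singletons differ, or two consecutive grid intervals $(D_1 \rho_i^j, D_1 \rho_i^{j + 1}]$ inside $\mathcal{T}_i$ are different, and hence disjoint. So $\mathcal{B} := \{X(d) : d \in W\}$ is a collection of boxes whose images $i(X)$ are pairwise disjoint, and whose union $U := \bigsqcup_{X \in \mathcal{B}} i(X)$ contains $W$. Since $i(X) \cap W \neq \emptyset$ for every $X \in \mathcal{B}$, the hypothesis (\ref{eBoxControl}) applies to each term. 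Summing over $X \in \mathcal{B}$ yields
\[
|V \cap U| = \sum_{X \in \mathcal{B}} |V \cap i(X)| \in (\delta \pm \epsilon) \sum_{X \in \mathcal{B}} |i(X) \cap S_r(N, l)| = (\delta \pm \epsilon)|S_r(N, l) \cap U|.
\]
From $W \subseteq U$ and $|W| \geq (1 - \epsilon) |S_r(N, l)|$ one obtains $|S_r(N, l) \cap U| = (1 + O(\epsilon))|S_r(N, l)|$, while $|V - U| \leq |S_r(N, l) - W| \leq \epsilon |S_r(N, l)|$; combining these bounds produces $|V| = \delta |S_r(N, l)| + O(\epsilon |S_r(N, l)|)$.

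The main subtlety, and the source of the explicit $1/\log D_1$ term in the error, is a boundary effect. The multiplicative length of the product of the grid intervals satisfies $\prod_{i > k} \rho_i = 1 + O(1/\log D_1)$, so an integer in $i(X(d))$ can exceed $d$ by a multiplicative factor of this size. Consequently, for $d \in W$ located in the top $O(1/\log D_1)$-fraction of $[1, N)$, a portion of $i(X(d))$ overshoots $N$, producing a discrepancy between $|i(X) \cap S_r(N, l)|$ and $|i(X)|$ of relative order $1/\log D_1$ that must be tracked separately from the $\epsilon$-error. Verifying that this is the only loss, and that the construction indeed defines valid boxes satisfying the hypotheses needed to invoke (\ref{eBoxControl}), is the most delicate part of the argument.
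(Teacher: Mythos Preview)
Your approach is correct and is precisely the argument behind Smith's Proposition~6.9, which the paper simply cites without elaboration. One small refinement on your framing of the boundary effect: the issue is not a discrepancy between $|i(X)\cap S_r(N,l)|$ and $|i(X)|$ within a single box, but rather that the hypothesis~(\ref{eBoxControl}) is only assumed for boxes with $i(X)\subseteq S_r^\ast(N,l)$, so you cannot apply it at all to a box $X(d)$ that overshoots $N$. The clean way to handle this is to discard those boxes from $\mathcal{B}$ and bound their total contribution to both $|V|$ and $|S_r(N,l)|$ trivially: any element of such a box lies in $[N(1-O(1/\log D_1)),\,N)$, and by~(\ref{eSrNl}) together with the assumption $\log\log N\geq 2\log\log D_1$ this interval carries $O(|S_r(N,l)|/\log D_1)$ elements of $S_r(N,l)$. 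This is the source of the explicit $1/\log D_1$ term.
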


\begin{proof}
This is a straightforward adaptation of Proposition 6.9 in Smith \cite{Smith}.
\end{proof}

When we apply Propositions \ref{p6.3} and \ref{p6.4}, we need to ensure the Siegel-less condition, i.e. we need to avoid all boxes $X$ such that there are $x \in X$ and some $i$ with $|d_i| > D_1$ and $d_i \mid x$. To do so, we shall add the union of all such boxes $X$ to $W$. Therefore it is important to show that this union is small, and this is exactly what the following proposition does.

\begin{prop}
\label{pSiegel}
Let $\ell$ be an integer such that $|\ell|$ is a prime. Take $N$ and $r$ satisfying equation (\ref{errange}). Also take $N \geq D_1 > \max(100, |\ell|)$ with $\log \log N \geq 2 \log \log D_1$. Let $f_1, f_2, \ldots$ be any sequence of squarefree integers greater than $D_1$ satisfying $f_i^2 < f_{i + 1}$. Define
\[
W_i := \{d \in S_r(N, \ell) : \textup{there is a box } X \textup{ with } d \in X \textup{ and } f_i \mid x \textup{ for some } x \in X\}.
\]
Then we have
\[
\left|\bigcup_{i = 1}^\infty W_i\right| \ll \frac{|S_r(N, \ell)|}{\log D_1}.
\]
\end{prop}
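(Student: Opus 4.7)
The plan is to bound each $|W_i|$ individually and then sum, exploiting the doubly-exponential growth $f_i > D_1^{2^{i-1}}$ (which follows from $f_i^2 < f_{i+1}$ and $f_1 > D_1$) together with the rigidity of the box structure.

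First I would extract a structural condition for membership in $W_i$. If $d \in W_i$ with witnessing box $X$ and $x \in X$ satisfying $f_i \mid x$, then for each prime divisor $\ell \mid f_i$ there is a position $j(\ell) \in [r]$, with the $j(\ell)$ pairwise distinct, such that $\ell \in X_{j(\ell)}$. Writing $d = q_1 \cdots q_r$ and $x = q'_1 \cdots q'_r$ with $q_j, q'_j \in X_j$, the singleton structure of $X_j$ for $j \leq k$ and the narrow intervals $X_j \subseteq (t_j, t_j(1 + e^{-(j-k)}/\log D_1))$ for $j > k$ force the following: decomposing $f_i = g_i h_i$ into the products of its prime factors below $D_1$ and of those at least $D_1$ respectively, we have $g_i \mid d$, and, for every prime $\ell \mid h_i$, $d$ has a prime divisor in a log-interval of half-width $O(1/\log D_1)$ around $\log \ell$.

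Next I would convert these into density estimates on $S_r(N, l)$. A sieve computation comparing $|S_{r - \omega(g_i)}(N/g_i, l)|$ to $|S_r(N, l)|$ via Theorem~\ref{tSpacing}, with the combinatorial factors $(r-1)!/(r - \omega(g_i) - 1)! \cdot (\tfrac{1}{2}\log\log N)^{-\omega(g_i)}$ absorbed into bounded constants in the range (\ref{errange}), bounds the density of $d \in S_r(N, l)$ with $g_i \mid d$ by $\ll 1/g_i$. The Poisson model for the prime divisors of $d$, following from Theorem~\ref{tSpacing} item (ii) applied to short log-intervals, shows that the density of $d$ having a prime divisor in the narrow log-interval around $\log \ell$ is $\ll 1/(\log D_1 \cdot \log \ell)$. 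Since distinct primes $\ell \mid h_i$ correspond to disjoint log-intervals and are essentially independent under the Poisson heuristic, a multidimensional upper-bound sieve gives
\[
\frac{|W_i|}{|S_r(N, l)|} \ll \frac{1}{g_i \cdot (\log D_1)^{\omega(h_i)} \cdot \prod_{\ell \mid h_i} \log \ell}.
\]

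Finally, summing over $i$ splits into cases by $s := \omega(h_i)$. If $s = 0$, then $f_i = g_i > D_1$ and the bound reduces to $|W_i|/|S_r(N, l)| \ll 1/f_i$, with $\sum_i 1/f_i \ll \sum_i D_1^{-2^{i-1}} \ll 1/D_1$. If $s \geq 1$, using $\log \ell \geq \log D_1$ for every $\ell \mid h_i$ together with $\log \ell^\ast \geq (\log h_i)/s$ for the largest prime $\ell^\ast \mid h_i$, and $\log h_i \geq \tfrac{1}{2}\log f_i \geq 2^{i-2}\log D_1$ in the regime $g_i \leq f_i^{1/2}$, yields a per-term bound $\ll s \cdot 2^{-i}(\log D_1)^{-2s}$; summing over $i$ and then over $s \geq 1$ gives $O((\log D_1)^{-2})$. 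The complementary regime $g_i > f_i^{1/2}$ contributes $\ll f_i^{-1/2}(\log D_1)^{-2s}$ per term and sums to $O(D_1^{-1/2}(\log D_1)^{-2})$. The three contributions combine to the required $\ll 1/\log D_1$. The main technical obstacle is making rigorous the informal independence used above, i.e.\ controlling jointly the density of $d \in S_r(N, l)$ with a prime divisor in each of a prescribed collection of disjoint short log-intervals; this is a multidimensional upper-bound sieve estimate that can be obtained in the spirit of Theorem~\ref{tSpacing}.
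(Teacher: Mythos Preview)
The paper proves this only by citation, deferring to Theorem~5.13 of \cite{CKMP} and Proposition~6.10 of \cite{Smith}; your sketch reproduces the argument in those sources (split $f_i = g_i h_i$ into its small and large prime parts, translate $d \in W_i$ into $g_i \mid d$ together with a prime of $d$ in a short log-interval around each $\ell \mid h_i$, bound the resulting density by a union over prime tuples and the Poisson estimate (\ref{eSrNl}), then sum using $f_i > D_1^{2^{i-1}}$). Your identification of the joint upper-bound sieve over several disjoint short log-intervals as the main technical step is accurate, and in the references it is carried out exactly as you outline, via $|W_i| \le \sum_{(q_\ell)} |\{d \in S_r(N,l): g_i \prod_\ell q_\ell \mid d\}|$ and an appeal to (\ref{eSrNl}) for $S_{r-\omega(f_i)}$.
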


\begin{proof}
This is a small generalization of Theorem 5.14 in \cite{CKMP}, which is based on Proposition 6.10 in Smith \cite{Smith}.
\end{proof}

\subsection{R\'edei matrices}
The previous subsections provide us with enough tools to deal with the $4$-rank distribution in our family of discriminants. Analogous results for $\ell = -1$ can be found in \cite[Section 5]{CKMP}. We now define the R\'edei matrix associated to a squarefree integer $d > 0$.

\begin{mydef}
\label{dMatrix}
Let $d > 0$ be a squarefree integer and suppose that $\Delta_{\Q(\sqrt{d})}$ has $t$ prime divisors, say $p_1 < \dots < p_t$. We can uniquely decompose $\chi_d$ as 
\[
\chi_d = \sum_{i = 1}^t \chi_i,
\]
where $\chi_i: G_\Q \rightarrow \FF_2$ has conductor a power of $p_i$. In case $p_i \neq 2$, we have $\chi_i = \chi_{p_i^\ast}$, where $p_i^\ast$ has the same absolute value as $p_i$ and is $1$ modulo $4$. When $p_i = 2$, we have $\chi_i \in \{\chi_{-4}, \chi_{-8}, \chi_8\}$.

The R\'edei matrix $R(d)$ is a $t \times t$ matrix with entry $(i, j)$ equal to
\[
\chi_j(\textup{Frob } p_i) \textup{ if } i \neq j, \quad \sum_{k \neq i} \chi_k(\textup{Frob } p_i) \textup{ if } i = j,
\]
so the sum of every row is zero.
\end{mydef}

It is a classical fact, going back to its namesake R\'edei, that
\[
\textup{rk}_4 \ \textup{Cl}(\Q(\sqrt{d})) = t - 1 - \textup{rk} \ R(d).
\]
One of the pleasant properties of $X(a)$ is that all $x \in X(a)$ have the same R\'edei matrix, and hence the same $4$-rank. There are several constraints for the possible shapes of the R\'edei matrix. First of all, there is quadratic reciprocity that relates the entry $(i, j)$ with $(j, i)$. Second of all, if $d \in S(N, \ell)$, then there are further constraints coming from equation (\ref{ed}) and equation (\ref{ed2}). We will now indicate what conditions this forces on $a$.

\begin{mydef}
\label{dValida}
Let $X$ be a box corresponding to $\mathbf{t} = (p_1, \dots, p_k, t_{k + 1}, \dots, t_r)$ and let $\tilde{j}$ be the index for which $X_{\tilde{j}} = \{|\ell|\}$. We define $\textup{Map}(M \sqcup M_\varnothing, \{\pm 1\})$ to be the set of maps from $M \sqcup M_\varnothing$ to $\{\pm 1\}$. Put $\widetilde{\textup{Map}}(M \sqcup M_\varnothing, \{\pm 1\}, \tilde{j}, \ell)$ to be the subset of $\textup{Map}(M \sqcup M_\varnothing, \{\pm 1\})$ satisfying 
\begin{itemize}
\item if $X_1 \neq \{2\}$ and $\ell > 0$, then $a(i, \tilde{j}) = a(i, -1)$ for all $i < \tilde{j}$, $a(\tilde{j}, i) = 1$ for all $i > \tilde{j}$ and
\[
\prod_{i = 1}^r a(i, -1) = 1;
\]
\item if $X_1 \neq \{2\}$ and $\ell < 0$, then $a(i, \tilde{j}) = 1$ for all $i < \tilde{j}$, $a(\tilde{j}, i) = a(i, -1)$ for all $i > \tilde{j}$;
\item if $X_1 = \{2\}$ and $\ell > 0$, then $a(i, \tilde{j}) = a(i, -1)$ for all $2 \leq i < \tilde{j}$, $a(\tilde{j}, i) = 1$ for all $i > \tilde{j}$ and
\[
\prod_{i = 1}^r a(i, -1) = \left(\frac{2}{|\ell|}\right);
\]
\item if $X_1 = \{2\}$ and $\ell < 0$, then $a(i, \tilde{j}) = 1$ for all $2 \leq i < \tilde{j}$, $a(\tilde{j}, i) = a(i, -1)$ for all $i > \tilde{j}$ and $\ell \equiv 1 \bmod 8$.
\end{itemize}
\end{mydef}

We will now describe exactly the kind of boxes that we will be working with for the rest of the paper.

\begin{mydef}
\label{dW}
Let $X$ be a box and let $N$ be a real number. Put
\[
D_1 := e^{(\log \log N)^{\frac{1}{10}}}, \quad C_0 := \frac{\log \log \log N}{100}, \quad C_0' := \sqrt{\log \log \log N}.
\]
We let $W$ be the largest subset of $S_r(N, \ell)$ satisfying
\begin{itemize}
\item the requirement $W \cap W_i = \varnothing$ for all $i \geq 1$, where $W_i$ is the set constructed in Proposition \ref{pSiegel} using the sequence $d_i$ from Definition \ref{dSiegel};
\item the requirement
\begin{align}
\label{eComfortable}
W \subseteq \{d \in S_r(N, \ell) : 2D_1 < p_i < p_{i + 1}/2 \textup{ for all } p_i > D_1\};
\end{align}
\item and the requirement
\[
W \subseteq \left\{d \in S_r(N, \ell) : \left|\frac{1}{2} \log \log p_i - i\right| < C_0^{1/5} \max(i, C_0)^{4/5}\right\}.
\]
\end{itemize}
We say that $X$ is $N$-decent if $r$ satisfies equation (\ref{errange}), $i(X) \subseteq S_r^\ast(N, \ell)$ and $i(X) \cap W \neq \varnothing$. Now let $W'$ be the largest subset of $W$ satisfying
\begin{itemize}
\item the requirement
\begin{align}
\label{eRegular}
W' \subseteq \left\{d \in S_r(N, \ell) : \left|\frac{1}{2} \log \log p_i - i\right| < C_0'^{1/5} \max(i, C_0')^{4/5}\right\};
\end{align}
\item and the requirement that for every $d \in W'$ there is some $i$ with $\frac{1}{2} r^{1/2} < i < \frac{1}{2} r$ and
\begin{align}
\label{eExtravagant}
\frac{\log p_i}{\log \log p_i} > (\log \log \log N)^{1/2} \cdot \sum_{j = 1}^{i - 1} \log p_j.
\end{align}
\end{itemize}
We say that $X$ is $N$-good if $X$ is $N$-decent and $i(X) \cap W' \neq \varnothing$. 
\end{mydef}

The main point of Definition \ref{dW} is that we can apply the results in Subsection \ref{ssLegendre} to these boxes provided that $N$ is sufficiently large. Let $P(m, n, j)$ be the probability that a randomly chosen $m \times n$ matrix with coefficients in $\FF_2$ has right kernel of rank $j$. Then we have the explicit formula
\[
P(m, n, j) = \frac{1}{2^{nm}} \prod_{i = 0}^{n - j - 1} \frac{(2^m - 2^i)(2^n - 2^i)}{2^{n - j} - 2^i},
\]
which we will use throughout the paper. For the remainder of this paper, $\iota$ denotes the unique group isomorphism between $\{\pm 1\}$ and $\FF_2$. To prove the next theorem, it suffices to work with $N$-decent boxes $X$, while we will work with $N$-good boxes in Section \ref{sMain}.

\begin{theorem}
\label{t4rank}
Let $\ell$ be such that $|\ell|$ is a prime $3$ modulo $4$. Then we have for all $k \geq 0$
\[
\left|\lim_{s \rightarrow \infty} P(s, s, k) \cdot |S(N, \ell)| - \left|\left\{d \in S(N, \ell) : \textup{rk}_4 \ \textup{Cl}(\Q(\sqrt{d})) = k\right\}\right|\right| = O\left(\frac{|S(N, \ell)|}{(\log \log N)^c}\right)
\]
for some absolute constant $c > 0$.
\end{theorem}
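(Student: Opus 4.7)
The strategy is a two-step reduction: first to boxes via Proposition \ref{pRed}, then within each box to equidistribution of Legendre symbols via Proposition \ref{p6.4}, combined with an effective Markov chain analysis of R\'edei matrices from the companion paper \cite{KP3}.

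By (\ref{eErdosKac}) we may restrict to $r$ with $|r - \mu| \leq \mu^{2/3}$, where $\mu = \frac{1}{2} \log \log N$. For each such $r$, apply Proposition \ref{pRed} with $V_r := \{d \in S_r(N, l) : \textup{rk}_4 \ \textup{Cl}(\Q(\sqrt{d})) = k\}$ and $W$ as in Definition \ref{dW}; parts (i)--(ii) of Theorem \ref{tSpacing} together with Proposition \ref{pSiegel} yield $|W| \geq (1 - \epsilon) |S_r(N, l)|$ with $\epsilon = O((\log D_1)^{-1})$. It then suffices to establish (\ref{eBoxControl}) for each $N$-decent box $X$ with $\delta = \delta_{r, k}$ satisfying $\delta_{r, k} \to \lim_{s \to \infty} P(s, s, k)$ as $r \to \infty$.

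Fix such an $N$-decent box $X$ associated to $\mathfrak{t} = (p_1, \ldots, p_k, t_{k+1}, \ldots, t_r)$. For $d = i(x)$ with $x \in X$, the R\'edei matrix $R(d)$ of Definition \ref{dMatrix} depends on $x$ only through the Legendre-symbol data encoded by an $a \in \widetilde{\textup{Map}}(M \sqcup M_\emptyset, \{\pm 1\}, \tilde{j}, l)$, so for every admissible $a$ all elements of $X(a)$ share the same matrix $R(a)$. Apply Proposition \ref{p6.4} with $k_0$ equal to the number of prescribed small primes (including $|l|$), $k_1$ slightly larger, and $k_2$ comparable to $r$; the hypotheses hold thanks to $D_1 = e^{(\log \log N)^{1/10}}$ and the spacing and regularity built into $W$ via (\ref{eComfortable}) and (\ref{eWeakRegular}). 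Since simultaneously permuting rows and columns of $R(a)$ preserves its rank, the permutation average produced by Proposition \ref{p6.4} is compatible with counting by rank.

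The set of admissible $a$ is an affine $\FF_2$-subspace of $\FF_2^{M \sqcup M_\emptyset}$, and the constraints from (\ref{ed}) and (\ref{ed2}) correspond, via Definition \ref{dValida}, to linear relations singling out the ``solvable over $\Q$'' fiber. The companion paper \cite{KP3} establishes an effective Gerth-type Markov chain on this restricted fiber, showing that the proportion of admissible $a$ for which $R(a)$ has rank $r - 1 - k$ equals $\lim_{s \to \infty} P(s, s, k) + O(r^{-\eta})$ for some $\eta > 0$. Combining this with Proposition \ref{p6.4} yields (\ref{eBoxControl}), and Proposition \ref{pRed} then gives $|V_r| = (\lim_{s \to \infty} P(s, s, k)) \cdot |S_r(N, l)| + O((\log \log N)^{-c'} |S_r(N, l)|)$ for some $c' > 0$. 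Summing over $r$ in the restricted range, while controlling the tail by (\ref{eErdosKac}), completes the proof.

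The main obstacle is the third paragraph: establishing an effective version of Gerth's Markov chain on the space of admissible R\'edei matrices subject to the genus-theoretic constraints coming from (\ref{ed}) and (\ref{ed2}). Unlike Smith's setup for $2$-Selmer-type statistics, here one must both make the rank convergence quantitative in $r$ and verify that the solubility constraint (\ref{ed2}) does not perturb the limit away from $\lim_{s \to \infty} P(s, s, k)$. This is exactly the content of \cite{KP3}.
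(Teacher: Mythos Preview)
Your proposal is correct and follows essentially the same route as the paper: reduce to typical $r$ via (\ref{eErdosKac}), pass to $N$-decent boxes via Proposition \ref{pRed} with the $W$ of Definition \ref{dW}, equidistribute Legendre-symbol data by Proposition \ref{p6.4} (using that permutations preserve the rank of the R\'edei matrix), and invoke \cite{KP3} for the effective rank distribution on the constrained matrix ensemble. One technical point you gloss over: the paper applies Proposition \ref{p6.4} not to $X$ directly but to the box $X'$ obtained by deleting the factor $X_{\tilde{j}} = \{|l|\}$, and then lifts the resulting equidistribution back to $X$ using only permutations that fix $\tilde{j}$; this is needed because the admissibility constraints in $\widetilde{\textup{Map}}(M \sqcup M_\emptyset, \{\pm 1\}, \tilde{j}, l)$ are tied to the specific index $\tilde{j}$ and are not invariant under arbitrary permutations in $\mathcal{P}(k_2)$.
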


\begin{proof}
Thanks to equation (\ref{eErdosKac}), it suffices to prove the theorem for $S_r(N, \ell)$ with $r$ an integer satisfying equation (\ref{errange}). Since one easily bounds the differences
\[
\left|\lim_{s \rightarrow \infty} P(s, s, k) - P(r - 1, r - 1, k)\right|,
\]
we may work with $P(r - 1, r - 1, k)$ instead. We now follow the proof of Smith \cite[Corollary 6.11]{Smith}. The first step is to reduce to $N$-decent $X$, for which we use our Theorem \ref{tSpacing}, Proposition \ref{pRed} and Proposition \ref{pSiegel}.

Now let $X = X_1 \times \dots \times X_r$ be an $N$-decent box, so in particular $i(X) \subseteq S_r^\ast(N, \ell)$. It suffices to prove that
\begin{multline}
\label{e4rankX}
\left|P(r - 1, r - 1, k) \cdot |i(X) \cap S_r(N, \ell)| - \left|\left\{d \in i(X) \cap S_r(N, \ell) : \textup{rk}_4 \ \textup{Cl}(\Q(\sqrt{d})) = k\right\}\right|\right| \\
= O\left(\frac{|X|}{(\log \log N)^c}\right)
\end{multline}
for some absolute constant $c > 0$, since $|X| \ll |i(X) \cap S_r(N, \ell)|$. We now apply Proposition \ref{p6.4} to the box $X'$ with $X_{\tilde{j}}$ removed. Then we obtain an absolute constant $c'$ such that
\begin{align}
\label{ePermaPrime}
\sum_{a \in \widetilde{\text{Map}}(M' \sqcup M'_\varnothing, \{\pm 1\})} \left| 2^{-|M'| - |M'_\varnothing|} \cdot (r - 1)! \cdot |X'| - \sum_{\sigma \in \mathcal{P}(r - 1)} |X'(\sigma(a))| \right| \leq \frac{(r - 1)! \cdot |X'|}{(\log \log N)^{c'}},
\end{align}
where $M' = \{(i, j) : 1 \leq i < j \leq r - 1\}$ and $M'_\varnothing = [r - 1] \times \{-1\}$. Let $S$ be the set of permutations of $[r]$ that fix $\tilde{j}$. Then equation (\ref{ePermaPrime}) implies
\begin{align}
\label{ePerma}
\sum_{a \in \widetilde{\text{Map}}(M \sqcup M_\varnothing, \{\pm 1\}, \tilde{j}, \ell)} \left| 2^{-|M'| - |M'_\varnothing|} \cdot (r - 1)! \cdot |X| - \sum_{\sigma \in S} |X(\sigma(a))| \right| \leq \frac{(r - 1)! \cdot |X|}{(\log \log N)^{c'}}.
\end{align}
Note that if $a \in \widetilde{\text{Map}}(M \sqcup M_\varnothing, \{\pm 1\}, \tilde{j}, \ell)$, then so is $\sigma(a)$ for any permutation $\sigma \in S$. Also observe that $a \in \widetilde{\text{Map}}(M \sqcup M_\varnothing, \{\pm 1\}, \tilde{j}, \ell)$ implies $i(X(a)) \subseteq S_r(N, \ell)$. Furthermore, if $i(X(a)) \cap S_r(N, \ell) \neq \varnothing$, then we certainly have $a \in \widetilde{\text{Map}}(M \sqcup M_\varnothing, \{\pm 1\}, \tilde{j}, \ell)$.

Set
\[
Q(X, k, \ell) := \frac{|\{a \in \widetilde{\text{Map}}(M \sqcup M_\varnothing, \{\pm 1\}, \tilde{j}, \ell) : \dim_{\FF_2} \text{ker}(A) = k + 1\}|}{|\widetilde{\text{Map}}(M \sqcup M_\varnothing, \{\pm 1\}, \tilde{j}, \ell)|},
\]
where $A$ is the R\'edei matrix associated to $a$ in the obvious way. Note that the the matrix $A'$ associated to $\sigma(a)$ has the same rank as the matrix $A$ associated to $a$. Then, because of equation (\ref{ePerma}), it is enough to show that there is an absolute constant $c > 0$ such that
\[
\left|P(r - 1 , r - 1, k) - Q(X, k, \ell)\right| = O\left(\frac{1}{(\log \log N)^c}\right)
\]
for every $r$ satisfying equation (\ref{errange}). But this follows from \cite[Theorem 4.8]{KP3}.
\end{proof}

\section{Proof of main theorems}
\label{sMain}
The aim of this section is to prove Theorem \ref{tMain}. Let $D_{\ell, k}(n)$ be the set of squarefree integers $d$ divisible by $\ell$ such that $\text{rk}_{2^k} \text{Cl}(\Q(\sqrt{d})) = n$ and furthermore $\text{Up}_{\Q(\sqrt{d})/\Q}(\ell) \in 2^{k - 1} \text{Cl}(\Q(\sqrt{d}))[2^k]$ if $\ell > 0$ and $\text{Up}_{\Q(\sqrt{d})/\Q}(\ell) \cdot (\sqrt{d}) \in 2^{k - 1} \text{Cl}(\Q(\sqrt{d}))[2^k]$ if $\ell < 0$. Then we have the decomposition
\[
\bigcup_{n = 0}^\infty D_{\ell, 2}(n) = S(\infty, \ell).
\]
Our next theorem is very much in spirit of the heuristical assumptions that led to Stevenhagen's conjecture \cite{Stevenhagen}. The first equality in Theorem \ref{tMain} is an immediate consequence of Theorem \ref{t4rank}, the material in Appendix \ref{aSte} and the theorem below. The second equality in Theorem \ref{tMain} is a consequence of the identity $\gamma = 1/2$, proven in Appendix \ref{aGamma}.

\begin{theorem}
\label{tHeuristic}
Let $\ell$ be an integer such that $|\ell|$ is a prime $3$ modulo $4$. There are $c, A, N_0 > 0$ such that for all integers $N > N_0$, all integers $m \geq 2$ and all sequences of integers $n_2 \geq \dots \geq n_m \geq n_{m + 1} \geq 0$
\[
\left|\left|[N] \cap \bigcap_{i = 2}^{m + 1} D_{\ell, i}(n_i)\right| - \frac{P(n_m, n_m, n_{m + 1})}{2^{n_m}} \cdot \left|[N] \cap \bigcap_{i = 2}^m D_{\ell, i}(n_i)\right|\right| \leq \frac{A \cdot |S(N, \ell)|}{(\log \log \log \log N)^{\frac{c}{m^2 6^m}}}.
\]
\end{theorem}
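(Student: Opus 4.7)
The plan is to prove Theorem \ref{tHeuristic} by induction on $m$, following the same overall architecture as Smith's proof of equidistribution for $2^k$-Selmer groups (Section 6 of \cite{Smith}), but with the new R\'edei reciprocity law of Section \ref{sRed} as the key algebraic input in place of his reciprocity step. First, I would reduce to working inside an $N$-good box $X = X_1 \times \dots \times X_r$ of size $r$ satisfying equation (\ref{errange}). The reduction uses Theorem \ref{tSpacing} together with Proposition \ref{pRed} and Proposition \ref{pSiegel}: after discarding $d$ which lie outside the $W'$ of Definition \ref{dW}, which contributes only $O(|S(N,l)|/(\log \log N)^c)$, it suffices to prove
\[
\left|\left|i(X) \cap \bigcap_{i=2}^{m+1} D_{l,i}(n_i)\right| - \frac{P(n_m,n_m,n_{m+1})}{2^{n_m}} \cdot \left|i(X) \cap \bigcap_{i=2}^{m} D_{l,i}(n_i)\right|\right| \leq \frac{A' \cdot |X|}{(\log \log \log \log N)^{c/m^2 6^m}}
\]
for every $N$-good box $X$.

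Second, I would parametrize membership in $D_{l,m+1}(n_{m+1})$ inside the conditional event by Artin pairings. Fixing a canonical generating set of $2^{m-1}\text{Cl}(\Q(\sqrt{d}))[2^m]$ coming from the Gauss genus theory ideals associated to ramified primes (as in Definition \ref{dMatrix} and Definition \ref{dValida}), the event $d \in D_{l,m+1}(n_{m+1})$ is determined by the rank of the matrix of Artin pairings $\text{Art}_{m,x}(b,\psi_{1,x})$ with $b$ running over ramified prime ideals and $\psi_{1,x}$ over a basis of the relevant cocycle space. I would invoke Theorem \ref{tRefMin} (minimality) and Theorem \ref{tRefAgr} (agreement) iteratively to rewrite each such Artin symbol as a single value $\phi_{\bar{x},a}(\text{Frob } p)$ of an expansion cocycle, with $\bar{x}$ ranging over cubes $\overline{X}_S$ built from the prime factorization of $d$; the raw cocycles exist because the inductive hypothesis provides $d \in \bigcap_{i \leq m} D_{l,i}(n_i)$.

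Third, the critical step uses Theorem \ref{Redei reciprocity}: for each R\'edei admissible $4$-tuple arising from the Artin pairing formula, it exchanges $\text{Art}(p, L(\psi_{l,\bar{x}})/\Q)$ with $\text{Art}(l, L(\psi_{p,\bar{x}'})/\Q)$. After this swap, as we vary the prime coordinate corresponding to the \emph{free} index $i_a$ from Theorem \ref{tRefAgr}, the quantity becomes a Legendre-symbol-type function of the coordinate, to which I apply Proposition \ref{p6.3} and Proposition \ref{p6.4} for equidistribution. To guarantee that the admissibility, minimality and agreement hypotheses hold for nearly all $\bar{x}$, I would build an additive system $\mathfrak{A}$ via Lemma \ref{lASGov} so that $\overline{Y}_{[r]}^\circ(\mathfrak{A})$ is the good set, then package the Artin pairing values as a map $F : \overline{Y}_\emptyset^\circ(\mathfrak{A}) \to \FF_2$ and invoke Proposition \ref{pdF} to obtain the $\frac{1}{2}$-equidistribution with error $|X| \cdot \epsilon$ for $\epsilon$ a tiny negative power of $|X_i|$.

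The main obstacle will be propagating the inductive error cleanly. At level $m$ we need, simultaneously for all $n_{m+1}$, that a positive proportion of $d$ in the conditional event admit raw cocycles satisfying minimality/agreement at every subcube $\bar{y} \in \bar{x}(T)$ with $T \subsetneq S$; a single bad subcube kills the reflection. To control the density of failures, I would model the minimality/agreement data as an auxiliary bipartite graph between primes dividing $d$ and cocycle components, and apply Lemma \ref{lClique} and Lemma \ref{lBipartite} to show that except for a set of density $(\log \log \log \log N)^{-c/m^2 6^m}$, a large clique of compatible primes exists. This graph-theoretic analysis, combined with the iterated application of Proposition \ref{pdF}, is what produces the doubly iterated $m^2 6^m$ in the exponent of the error term, matching the dependencies in Proposition \ref{pdF}. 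A secondary, more technical difficulty is that R\'edei admissibility at the infinite place requires imposing extra sign conditions via the set $\widetilde{\text{Ram}}$; these I would pre-incorporate via the congruence restrictions of Definition \ref{dValida}, so that Theorem \ref{Redei reciprocity} can be applied uniformly for every $d$ inside the box.
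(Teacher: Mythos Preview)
Your overall architecture is right: reduce to $N$-good boxes via Theorem \ref{tSpacing}, Proposition \ref{pRed}, Proposition \ref{pSiegel}; fix the first R\'edei matrix $a$; express the entries of $\textup{Art}_{a,m,d}$ as values of expansion maps via Theorem \ref{tRefMin} and Theorem \ref{tRefAgr}; invoke Theorem \ref{Redei reciprocity} for the row of the pairing corresponding to $l$; and finish with Proposition \ref{pdF}. This is exactly the paper's skeleton.

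However, your third step contains a genuine gap. After the reciprocity swap you do \emph{not} obtain ``a Legendre-symbol-type function of the coordinate'' to which Propositions \ref{p6.3} and \ref{p6.4} apply. What you obtain is $\phi_{\pi_{S'}(\bar{x}),l}(\textup{Frob}\,p)$, the Frobenius of $p$ in the central extension $L(\phi_{\pi_{S'}(\bar{x}),l})/M_\circ(Z)$, which is a degree $2^{(M_{\text{box}}-1)^{|S'|}}$ extension, not a quadratic one. Propositions \ref{p6.3}--\ref{p6.4} only equidistribute products of Legendre symbols; they cannot see this higher Frobenius. The paper instead applies the Chebotarev Density Theorem to the field $M(Z)$ (this is the content of the proof of Proposition \ref{pFinal}), obtaining equidistribution of $\textup{Frob}\,p$ over $\Gal(M(Z)/M_\circ(Z))\cong\mathscr{G}_{S'}(Z)$, and only then feeds the resulting values into Proposition \ref{pdF}. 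Propositions \ref{p6.3}--\ref{p6.4} are used earlier in the chain of reductions, to control $|X(a)|$ and to discard bad $a$ and poor $Q$; they are not the source of oscillation for the Artin pairing itself.

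Two further points where your description diverges from what actually works. First, you miss the crucial case split: reciprocity is only needed for the row $j_1=n_{a,m}+1$ (the pairing with $\mathfrak{l}$); for $j_1\leq n_{a,m}$ Smith's original method applies unchanged, and the two cases require different choices of variable indices (Definition \ref{dVar}). Second, Lemmas \ref{lClique} and \ref{lBipartite} are not used to control ``minimality/agreement failures''. They are used to show that for most $a$ one can find a set $S$ of variable indices with $a(i,j)=1$ for all $i,j\in S$ and $a(i,-1)=1$; these Legendre conditions are exactly what is needed to make $(A_1,A_2,\psi_1,\psi_2)$ R\'edei admissible and to satisfy equation (\ref{eLegendreAss}) so that Lemma \ref{lASGov} produces a governing expansion on $(Z,S',l)$. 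The additive system that packages the Artin pairing values and the minimality/agreement constraints is a \emph{different} construction, built separately in Lemma \ref{lAS}, not via Lemma \ref{lASGov}.
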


To prove Theorem \ref{tHeuristic}, our first step is to reduce to boxes with some nice properties. Definition \ref{dW} precisely pinpoints the boxes for which we will prove the desired equidistribution. We will now state a proposition and prove that the proposition implies Theorem \ref{tHeuristic}, so that it remains to prove the proposition.

\begin{prop}
\label{pBox}
Let $\ell$ be an integer such that $|\ell|$ is a prime $3$ modulo $4$. There are $c, A, N_0 > 0$ such that for all integers $N > N_0$, all integers $m \geq 2$, all sequences of integers $n_2 \geq \dots \geq n_m \geq n_{m + 1} \geq 0$ and all $N$-good boxes $X$
\begin{multline*}
\left|\left|i(X) \cap \bigcap_{i = 2}^{m + 1} D_{\ell, i}(n_i)\right| - \frac{P(n_m, n_m, n_{m + 1})}{2^{n_m}} \cdot \left|i(X) \cap \bigcap_{i = 2}^m D_{\ell, i}(n_i)\right|\right| \leq \\
\frac{A \cdot |i(X) \cap S_r(N, \ell)|}{(\log \log \log \log N)^{\frac{c}{m^2 6^m}}}.
\end{multline*}
\end{prop}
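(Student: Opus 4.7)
My plan follows the overall blueprint of Smith's argument in Section 7 of \cite{Smith}, with the novelty that we must track not only the $2^\infty$-part of $\text{Cl}(\Q(\sqrt{d}))$ but also the position of the ideal $\mathfrak{l}$ (or $\mathfrak{l}\cdot(\sqrt{d})$, depending on the sign of $l$) in the filtration $2^{k-1}\text{Cl}(\Q(\sqrt{d}))[2^k]$. Fix an $N$-good box $X$ and sequences $n_2\geq\cdots\geq n_{m+1}\geq 0$. For $d\in i(X)$, membership in $D_{l,m+1}(n_{m+1})$ given membership in $\bigcap_{i=2}^m D_{l,i}(n_i)$ is controlled, via the exact sequence linking $\text{Cl}[2^{m+1}]$ to cocycles, by the values of the $(m+1)$-th Artin pairing $\text{Art}_{m+1,d}(\mathfrak{l}(d),\psi)$ as $\psi$ ranges over a basis of $2^{m-1}\text{Cl}^\vee(\Q(\sqrt{d}))[2^m]$. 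So I would reduce the statement to showing that these Artin pairing values are jointly equidistributed modulo the constraints imposed by the fixed ranks.

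The core input for this equidistribution is a cube-sum identity. For each target cocycle $\psi=\psi_{1,d}$, I would select a cube $\bar x\in\overline X_{[m+1]}$ with $d\in\bar x(\emptyset)$ such that for every corner $x\in\bar x(\emptyset)\setminus\{d\}$ the raw cocycles $\{\psi_{i,x}\}$ can be built out of good expansions. Lemma \ref{lASGov} produces an additive system whose zero set contains the allowed cubes, and Proposition \ref{pExpansion} together with the minimality hypothesis produces the required raw cocycles on all smaller faces. Applying Theorem \ref{tRefMin} at the top of the cube (or Theorem \ref{tRefAgr} in the agreement variant, which is needed for the side corresponding to the pointer) yields
\[
\text{Art}_{m+1,d}(\mathfrak{l}(d),\psi_{1,d}) = \sum_{p\mid l}\phi_{\pi_{[m+1]-\{i_a\}}(\bar x),\,l}(\text{Frob } p)
\]
up to minimality-sum contributions that cancel. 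This is where the higher R\'edei reciprocity law of Theorem \ref{Redei reciprocity} intervenes: the right-hand side, which is a Frobenius value of a governing expansion at primes dividing $l$, gets transformed into a sum of Artin symbols at primes dividing $d$ inside a dual governing expansion whose support now contains $l$ instead of $d$. The upshot is that the variable $d$, originally buried inside the field of definition, is moved outside into a Frobenius input. R\'edei admissibility at each level follows from the $N$-goodness conditions (\ref{eComfortable}), (\ref{eWeakRegular}), (\ref{eRegular}), which guarantee that every intermediate expansion field has only ramified primes among $\{2,l,p_1,\dots,p_r\}$ and that the relevant splitting/ramification hypotheses hold.

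Once the Artin values have been rewritten as Chebotarev-type data depending linearly on $d$ through Legendre symbols and squarefree residues modulo controlled moduli, I would combine the equidistribution results of Proposition \ref{p6.3} and Proposition \ref{p6.4} with the combinatorial input of Proposition \ref{pdF}, using the $(a,S)$-acceptable additive system produced by Lemma \ref{lASGov} with $a\leq 2^{r-1+|\Omega|}$. The condition that the governing expansion evaluates to a prescribed vector of Artin values at each face of the cube reduces, after the swap, to prescribing the values of $dF$ on a grid, and Proposition \ref{pdF} delivers equidistribution up to an error $\epsilon$ as long as the box is wide enough. The regular spacing guaranteed by (\ref{eRegular}) together with (\ref{eExtravagant}), which provides a very large prime around index $r^{1/2}/2$, supplies the room needed in assumption \emph{(v)} of Proposition \ref{p6.4} to iterate over the $m$ levels.

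The main obstacle, and the reason for the quantitative loss $(\log\log\log\log N)^{-c/m^2 6^m}$, is controlling the proliferation of bad configurations at deeper levels of the cube: the collections of $\bar x$ on whose smaller faces either the raw cocycle fails to be minimal, or the additive system fails to be acceptable, or the Legendre symbol matrix fails to lie in the expected fiber. I would use Lemmas \ref{lClique} and \ref{lBipartite} to bound the exceptional set inside a generic $\overline X_S$ with $|S|=m+1$; the factor $6^{m}$ in Proposition \ref{pdF} and the $2^m$ exponents arising from the bipartite bound account for the $m^2 6^m$ in the error exponent. Packaging all levels together and absorbing the Siegel exceptional set via Proposition \ref{pSiegel} finishes the argument; the stated bound then follows after summing over boxes $X$ in Theorem \ref{tHeuristic} by the transfer in Proposition \ref{pRed}.
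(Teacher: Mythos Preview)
Your high-level blueprint---cube-sum identities via the reflection principles, application of the higher R\'edei reciprocity law to move the Frobenius argument, and equidistribution via Proposition \ref{pdF} and Chebotarev---matches the paper's strategy. However, there are two genuine structural gaps.

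First, you omit the essential decomposition by Legendre-symbol configuration. The paper does not prove Proposition \ref{pBox} directly; it first splits $i(X)$ into the pieces $X(a)$ indexed by maps $a\in\widetilde{\textup{Map}}(M\sqcup M_\emptyset,\{\pm1\},\tilde j,l)$, which fix the R\'edei matrix and hence the spaces $D_{a,2},D_{a,2}^\vee$. Only after fixing $a$ does it make sense to choose variable indices $S\subseteq[r]$ (Definition \ref{dVar}), because the choice of $S$ depends on the basis vectors $v_i,w_i$ of $D_{a,2},D_{a,2}^\vee$ and on the graph $G$ built from the values $a(i,j)$. The graph-theoretic Lemmas \ref{lClique} and \ref{lBipartite} are used not to bound exceptional cubes $\bar x$, as you suggest, but to show that most $a$ are ``very $(N,m,X)$-acceptable'', i.e.\ admit such a set $S$; the remaining $a$ are discarded by a separate density argument. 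Your phrase ``select a cube $\bar x\in\overline X_{[m+1]}$'' skips this: different corners of a generic cube in $\overline X_{[m+1]}$ need not lie in the same $X(a)$, so the raw cocycles $\psi_{i,x}$ have incompatible domains and the minimality/agreement hypotheses of Theorems \ref{tRefMin}, \ref{tRefAgr} cannot even be formulated.

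Second, your displayed formula is incorrect on both sides. Theorem \ref{tRefAgr} gives an identity for the \emph{sum} $\sum_{x\in\bar x(\emptyset)}\textup{Art}_{|S|,x}(b,\psi_{1,x})$, not for a single $\textup{Art}_{m+1,d}(\mathfrak l(d),\psi_{1,d})$, and the expansion appearing on the right has pointer $\textup{pr}_1(\pi_{i_a}(\bar x))\cdot\textup{pr}_2(\pi_{i_a}(\bar x))$, not $l$. The swap to an expansion with pointer $l$ and Frobenius at the varying primes $p_1,p_2\in\pi_{i_2}(\bar x)$ is exactly what Theorem \ref{Redei reciprocity} accomplishes afterward (see the end of the proof of Lemma \ref{lAS}); it is not the output of the reflection principle itself. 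Moreover, the relevant pairing is $\textup{Art}_{m}$, not $\textup{Art}_{m+1}$: the transition from $\bigcap_{i\le m}D_{l,i}(n_i)$ to $\bigcap_{i\le m+1}D_{l,i}(n_i)$ is governed by the full $(n_{a,m}+1)\times n_{a,m}$ matrix $\textup{Art}_{a,m,d}$, whose last row encodes $\mathfrak l$, and one must prove equidistribution of this entire matrix (Proposition \ref{pArt}), not only of its last row.
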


\begin{proof}[Proof that Proposition \ref{pBox} implies Theorem \ref{tHeuristic}] Due to equation (\ref{eErdosKac}) we may restrict to $S_r(N, \ell)$ with $r$ satisfying equation (\ref{errange}). Let $D_1$ and $W'$ be as in Definition \ref{dW}. Part (i), (ii) and (iii) of Theorem \ref{tSpacing} give upper bounds for the complements of the sets appearing in equation (\ref{eComfortable}), equation (\ref{eRegular}) and equation (\ref{eExtravagant}) respectively. Furthermore, Proposition \ref{pSiegel} shows that most $d \in S_r(N, \ell)$ are outside the union of the $W_i$. Therefore we see that there is an absolute constant $C > 0$ with
\[
|W'| > \left(1 - \frac{C}{\exp\left(\left(\log \log \log N\right)^{1/4}\right)}\right) \cdot |S_r(N, \ell)|.
\]
We now apply Proposition \ref{pRed} two times with respectively
\[
V _1 := S_r(N, \ell) \cap \bigcap_{i = 2}^{m + 1} D_{\ell, i}(n_i), \quad V_2 := S_r(N, \ell) \cap \bigcap_{i = 2}^m D_{\ell, i}(n_i)
\]
and with the above $D_1$ and $W'$ in both cases. Theorem \ref{t4rank} and Proposition \ref{pBox} ensure that equation (\ref{eBoxControl}) is satisfied. Then we get
\[
V_1 = \lim_{s \rightarrow \infty} P(s, s, n_2) \cdot \prod_{i = 2}^m \frac{P(n_i, n_i, n_{i + 1})}{2^{n_i}} \cdot |S_r(N, \ell)| + O\left(\frac{|S_r(N, \ell)|}{(\log \log \log \log N)^{\frac{c}{m^2 6^m}}}\right)
\]
and
\[
V_2 = \lim_{s \rightarrow \infty} P(s, s, n_2) \cdot \prod_{i = 2}^{m - 1} \frac{P(n_i, n_i, n_{i + 1})}{2^{n_i}} \cdot |S_r(N, \ell)| + O\left(\frac{|S_r(N, \ell)|}{(\log \log \log \log N)^{\frac{c}{m^2 6^m}}}\right).
\]
This quickly implies Theorem \ref{tHeuristic}.
\end{proof}

Our next goal is to fix the first R\'edei matrix. In other words, we split $X$ into the union $X(a)$ with $a$ running over all maps from $M \sqcup M_\varnothing$ to $\{\pm 1\}$. If $S \subseteq [r]$, $Q \in \prod_{i \in S} X_i$ and $j \in S$, we write
\[
X_j(a, Q) := \left\{x_j \in X_j : \left(\frac{\pi_i(Q)}{x_j}\right) = a(i, j) \textup{ for } i \in S \textup{ and } \left(\frac{p}{x_j}\right) = a(j, p) \textup{ for } p \in P\right\}.
\]
We also define $X(a, Q)$ to be the subset of $x \in X(a)$ with $\pi_S(x) = Q$. Smith's method does not prove equidistribution for all $a$, but only for most $a$. This prompts our next definition. 

\begin{mydef}
\label{dRedei}
Let $X$ be a $N$-good box and let $a \in \textup{Map}(M \sqcup M_\varnothing, \{\pm 1\})$. Set
\[
r'(a, X) :=
\left\{
	\begin{array}{ll}
		r  & \mbox{\textup{if} } X_1 = \{2\} \textup{ or } \prod_{i = 1}^r a(i, -1) = 1 \\
		r + 1 & \mbox{\textup{otherwise.}}
	\end{array}
\right.
\]
Recall that we associated a $r'(a, X) \times r'(a, X)$ matrix $A$ with coefficients in $\FF_2$ to the map $a \in \textup{Map}(M \sqcup M_\varnothing, \{\pm 1\})$ during the proof of Theorem \ref{t4rank}, which is simply the R\'edei matrix of $x$ for any choice of $x \in X(a)$. Let $V$ be the vector space $\FF_2^{r'(a, X)}$. We define
\[
D_{a, 2} := \{v \in V : v^TA = 0\}, \quad D_{a, 2}^\vee := \{v \in V : Av = 0\}.
\]
Put
\[
n_{\textup{max}} := \left \lfloor \sqrt{\frac{c'}{m^2 6^m} \log \log \log \log \log N} \right \rfloor, \quad n_{a, 2} := \dim_{\FF_2} D_{a, 2} - 1,
\]
where $c'$ is a constant specified later. Let $X$ be a $N$-good box and let $\tilde{j}$ be the index such that $X_{\tilde{j}} = \{|\ell|\}$. We define the vectors $R := (1, 1, \dots, 1) \in D_{a, 2}^\vee$ and
\[
C :=
\left\{
\begin{array}{ll}
(1, 1, \dots, 1)  & \mbox{\textup{if} } X_1 = \{2\} \textup{ or } \prod_{i = 1}^r a(i, -1) = 1 \\
(0, 1, 1, 1, \dots, 1) & \mbox{\textup{otherwise.}} 
\end{array}
\right.
\]
We next define the vector
\[
L := 
\left\{
\begin{array}{ll}
(0, \dots, 0, 1, 0, \dots, 0) \in D_{a, 2}  & \mbox{\textup{if} } \ell > 0 \\
(0, \dots, 0, 1, 0, \dots, 0) + C \in D_{a, 2} & \mbox{\textup{if} } \ell < 0,
\end{array}
\right.
\]
where $(0, \dots, 0, 1, 0, \dots, 0)$ has a $1$ exactly on the $\tilde{j}$-th position. Since $\ell \mid d$, the solubility of $x^2 - dy^2 = \ell$ in $x, y \in \Q$ is precisely equivalent to $L$ being in $D_{a, 2}$. We fix a choice of an index $i$ satisfying equation (\ref{eExtravagant}) and we call it $k_{\textup{gap}}$. Then we say that $a \in \textup{Map}(M \sqcup M_\varnothing, \{\pm 1\})$ is $(N, m, X)$-acceptable if the following conditions are satisfied
\begin{itemize}
\item $n_{a, 2} \leq n_{\textup{max}}$;
\item we have $a \in \widetilde{\textup{Map}}(M \sqcup M_\varnothing, \{\pm 1\}, \tilde{j}, \ell)$, see Definition \ref{dValida};
\item we have for all $j > k$
\begin{align}
\label{eXjlarge}
|X_j(a, Q)| \geq \frac{|X_j|}{(\log t_{k + 1})^{100}},
\end{align}
where $Q$ is the unique point in $X_1 \times \dots \times X_k$;
\item putting
\[
S_{\textup{pre}} := \left\{i \in [r] : \frac{k_{\textup{gap}}}{2} \leq i < k_{\textup{gap}}\right\}, \quad \alpha_{\textup{pre}} := \left|S_{\textup{pre}}\right|
\]
and
\[
S_{\textup{post}} := \left\{i \in [r] : k_{\textup{gap}} < i \leq 2k_{\textup{gap}}\right\}, \quad \alpha_{\textup{post}} := \left|S_{\textup{post}}\right|,
\]
we have for all $T_1 \in D_{a, 2}$, $T_2 \in D_{a, 2}^\vee$ such that $T_1 \not \in \langle L \rangle$ or $T_2 \not \in \langle R \rangle$
\begin{align}
\label{eGen2}
\left|\left|\left\{i \in S_{\textup{pre}} : \pi_i(T_1 + T_2) = 0\right\}\right| - \frac{\alpha_{\textup{pre}}}{2}\right| \leq \frac{\alpha_{\textup{pre}}}{\log \log \log \log N}
\end{align}
and
\begin{align}
\label{eGen3}
\left|\left|\left\{i \in S_{\textup{post}} : \pi_i(T_1 + T_2) = 0\right\}\right| - \frac{\alpha_{\textup{post}}}{2}\right| \leq \frac{\alpha_{\textup{post}}}{\log \log \log \log N}.
\end{align}
\end{itemize}
\end{mydef}

Let us explain the second condition. Note that given $a \in \text{Map}(M \sqcup M_\varnothing, \{\pm 1\})$, $i(X(a))$ is entirely contained in $S_r(N, \ell)$ or completely disjoint from $S_r(N, \ell)$. Since we only care about the intersection $i(X) \cap S_r(N, \ell)$, we restrict to only those $a$ with $i(X(a)) \subseteq S_r(N, \ell)$, and this is exactly what the second condition does. The importance of the fourth condition will be explained after our next two definitions.

Once we have fixed the first R\'edei matrix, we are ready to study all the higher R\'edei matrices. In fact, we will prove equidistribution of all the relevant higher R\'edei matrices. We formalize this as follows.

\begin{mydef}
Let $a \in \textup{Map}(M \sqcup M_\varnothing, \{\pm 1\})$ and $m \in \Z_{\geq 2}$ be given. Choose filtrations of vector spaces
\[
D_{a, 2} \supseteq \dots \supseteq D_{a, m}, \quad D_{a, 2}^\vee \supseteq \dots \supseteq D_{a, m}^\vee
\]
with $L \in D_{a, m}$ and $R \in D_{a, m}^\vee$. Define for $2 \leq i \leq m$
\[
n_{a, i} := \dim_{\FF_2} D_{a, i} - 1.
\]
If $\textup{Art}_{a, i} : D_{a, i} \times D_{a, i}^\vee \rightarrow \FF_2$ are bilinear pairings for $2 \leq i \leq m$, we call $(\textup{Art}_{a, i})_{2 \leq i \leq m}$ a sequence of Artin pairings if for every $2 \leq i < m$ the left kernel of $\textup{Art}_{a, i}$ is $D_{a, i + 1}$ and the right kernel of $\textup{Art}_{a, i}$ is $D_{a, i + 1}^\vee$. We say that a bilinear pairing
\[
\textup{Art}_{a, i} : D_{a, i} \times D_{a, i}^\vee \rightarrow \FF_2
\]
is valid if $L$ and $R$ are respectively in the left and right kernel. We call a sequence of Artin pairings valid if every element of the sequence is.

Let $X$ be an $N$-good box, let $a \in \textup{Map}(M \sqcup M_\varnothing, \{\pm 1\})$ be $(N, m, X)$-acceptable and also let $d \in i(X(a))$. We can naturally associate an infinite sequence of Artin pairings to $d$ as follows. Write the prime divisors of the discriminant of $\Q(\sqrt{d})$ as $p_1, \dots, p_{r'(a, X)}$ with $p_1 < \dots < p_{r'(a, X)}$. By construction, we have that for each $v \in D_{a, 2}$ that the unique ideal in $\Q(\sqrt{d})$ with norm
\[
\prod_{i = 1}^{r'(a, X)} p_i^{\pi_i(v)}
\]
is in $2 \textup{Cl}(\Q(\sqrt{d}))[4]$. Similarly we have for each $v \in D_{a, 2}^\vee$ that the character
\[
\sum_{i = 1}^{r'(a, X)} \pi_i(v) \chi_i
\]
is in $2 \textup{Cl}(\Q(\sqrt{d}))^\vee[4]$, where $\chi_i$ is as in Definition \ref{dMatrix}. In other words, we have natural epimorphisms
\[
D_{a, 2} \rightarrow 2 \textup{Cl}(\Q(\sqrt{d}))[4] \textup{ and } D_{a, 2}^\vee \rightarrow 2 \textup{Cl}(\Q(\sqrt{d}))^\vee[4].
\]
Now we declare $D_{a, i, d}$ and $D_{a, i, d}^\vee$ to be the inverse image of respectively $2^{i - 1} \textup{Cl}(\Q(\sqrt{d}))[2^i]$ and $2^{i - 1} \textup{Cl}(\Q(\sqrt{d}))^\vee[2^i]$ under these maps. Furthermore, we let $\textup{Art}_{a, i, d}$ be the natural pairing 
\[
2^{i - 1} \textup{Cl}(\Q(\sqrt{d}))[2^i] \times 2^{i - 1} \textup{Cl}(\Q(\sqrt{d}))^\vee[2^i] \rightarrow \FF_2
\]
pulled back to $D_{a, i, d}$ and $D_{a, i, d}^\vee$. 

This gives an infinite sequence of Artin pairings $\textup{Art}_{a, i, d}$ for every $d$. Furthermore, the sequence is valid if and only if equation (\ref{eGenPell}) is soluble. Finally, we define for a sequence of Artin pairings
\[
X(a, (\textup{Art}_{a, i})_{2 \leq i \leq m}) := \{d \in X(a) : \textup{Art}_{a, i, d} = \textup{Art}_{a, i} \textup{ for } 2 \leq i \leq m\}.
\]
\end{mydef}

For $d \in i(X(a))$, we have a natural isomorphism between $D_{a, 2}$ and $f^{-1}(2\textup{Cl}(\Q(\sqrt{d}))[4])$. Similarly, we have a natural isomorphism between the dual $D_{a, 2}^\vee$ and $2\textup{Cocy}(G_\Q, N(d)[4])$. Furthermore, the resulting Artin pairings are compatible. This is also true for $D_{a, m}$ and $f^{-1}(2^{m - 1}\textup{Cl}(\Q(\sqrt{d}))[2^m])$ provided that $\text{Art}_{a, i, d}$ and $\text{Art}_{a, i}$ are equal for $2 \leq i < m$, and the same holds on the dual side.

Take a non-trivial character $F: \text{Mat}(n_m + 1, n_m, \FF_2) \rightarrow \FF_2$. Our goal is to prove equidistribution of $F(\text{Art}_{a, m, d})$, where we view $\text{Art}_{a, m, d}$ as a matrix using fixed bases of $D_{a, m}$ and $D_{a, m}^\vee$. In order to prove equidistribution of $F$, we will start by finding a suitable set of \emph{variable indices} $S \subseteq [r]$. Then we shall fix one choice of prime in $X_i$ for the indices $i \in [r] - S$ and we will only vary over the primes in $X_i$ for the remaining $i \in S$. We make this precise in our next definition, where we shall also fix the bases used to identify $\text{Art}_{a, m, d}$ with a matrix.

\begin{mydef}
\label{dVar}
Let $a \in \widetilde{\textup{Map}}(M \sqcup M_\varnothing, \{\pm 1\}, \tilde{j}, \ell)$ and let $m \in \Z_{\geq 2}$ be an integer. We fix a basis $v_1, \ldots, v_{n_{a, 2}}, L$ for $D_{a, 2}$ and a basis $w_1, \ldots, w_{n_{a, 2}}, R$ for $D_{a, 2}^\vee$ for the rest of the paper in such a way that $v_1, \ldots, v_{n_{a, i}}, L$ is a basis for $D_{a, i}$ for $2 \leq i \leq m$, such that $w_1, \ldots, w_{n_{a, i}}, R$ is a basis for $D_{a, i}^\vee$ for $2 \leq i \leq m$ and such that $\pi_{\tilde{j}}(w_k) = 0$ for all $1 \leq k \leq n_{a, 2}$. We can decompose any $F : \textup{Mat}(n_{a, m} + 1, n_{a, m}, \FF_2) \rightarrow \{\pm 1\}$ as
\[
F = \prod_{\substack{1 \leq j_1 \leq n_{a, m} + 1 \\ 1 \leq j_2 \leq n_{a, m}}} E_{j_1, j_2}^{c_{j_1, j_2}(F)},
\]
where $c_{j_1, j_2}(F) \in \FF_2$ and $E_{j_1, j_2}$ is the map that sends a matrix to the coefficient in the entry $(j_1, j_2)$ viewed as an element of $\{\pm 1\}$ via $\iota^{-1}$. We say that $S \subseteq [r]$ is a set of variable indices for a non-zero character $F : \textup{Mat}(n_{a, m} + 1, n_{a, m}, \FF_2) \rightarrow \{\pm 1\}$ if there are integers $1 \leq j_1 \leq n_{a, m} + 1$ and $1 \leq j_2 \leq n_{a, m}$ and integers $i_1(j_1, j_2), i_2(j_1, j_2)$ with $c_{j_1, j_2}(F) \neq 0$, $i_2(j_1, j_2) \in S_{\textup{post}} \cap S$ and $S - \{i_2(j_1, j_2)\} \subseteq S_{\textup{pre}}$ such that
\begin{itemize}
\item in case $j_1 = n_{a, m} + 1$, we have $c_{j_1, j_2}(F) = 0$ for all $1 \leq j_1, j_2 \leq n_{a, m}$, $|S| = m$, $i_1(j_1, j_2) = \tilde{j}$ and
\[
S - \{i_1(j_1, j_2), i_2(j_1, j_2)\} \subseteq \bigcap_{i = 1}^{n_{a, 2}} \{j \in [r] : \pi_j(v_i) = 0\} \cap \bigcap_{i = 1}^{n_{a, 2}} \{j \in [r] : \pi_j(w_i) = 0\}
\]
and
\[
i_2(j_1, j_2) \in \{j \in [r] : \pi_j(w_{j_2}) = 1\} \cap \bigcap_{i = 1}^{n_{a, 2}} \{j \in [r] : \pi_j(v_i) = 0\} \cap \bigcap_{\substack{i = 1 \\ i \neq j_2}}^{n_{a, 2}} \{j \in [r] : \pi_j(w_i) = 0\}.
\]
\item in case $j_1 \neq n_{a, m} + 1$, we have $|S| = m + 1$, $i_1(j_1, j_2) \in S$ and
\[
S - \{i_1(j_1, j_2), i_2(j_1, j_2)\} \subseteq \bigcap_{i = 1}^{n_{a, 2}} \{j \in [r] : \pi_j(v_i) = 0\} \cap \bigcap_{i = 1}^{n_{a, 2}} \{j \in [r] : \pi_j(w_i) = 0\}
\]
and
\[
i_1(j_1, j_2) \in \{j \in [r] : \pi_j(w_{j_2}) = 1\} \cap \bigcap_{i = 1}^{n_{a, 2}} \{j \in [r] : \pi_j(v_i) = 0\} \cap \bigcap_{\substack{i = 1 \\ i \neq j_2}}^{n_{a, 2}} \{j \in [r] : \pi_j(w_i) = 0\}
\]
and
\[
i_2(j_1, j_2) \in \{j \in [r] : \pi_j(v_{j_1}) = 1\} \cap \bigcap_{\substack{i = 1 \\ i \neq j_1}}^{n_{a, 2}} \{j \in [r] : \pi_j(v_i) = 0\} \cap \bigcap_{i = 1}^{n_{a, 2}} \{j \in [r] : \pi_j(w_i) = 0\}.
\]
\end{itemize}
\end{mydef}

In simple words, the last row of $\text{Art}_{a, m, d}$ corresponds to the Artin pairing with $\ell$. This is exactly the case $j_1 = n_{a, m} + 1$ in the above definition. To prove equidistribution of these entries, we will use our higher R\'edei reciprocity law. It is for this reason that in this case our choice of variable indices is different than Smith's choice \cite[p.\ 32]{Smith}, while if $j_1 \leq n_{a, m}$ we make exactly the same choice as Smith.

We will now find our variable indices. It is here that the fourth condition in Definition \ref{dRedei} turns out to be crucial.

\begin{lemma}
\label{lVar}
Suppose that $a \in \widetilde{\textup{Map}}(M \sqcup M_\varnothing, \{\pm 1\}, \tilde{j}, \ell)$ satisfies equation (\ref{eGen2}). Assume that $v_1, \ldots, v_d, L \in D_{a, 2}$ and $v_{d + 1}, \ldots, v_e, R \in D_{a, 2}^\vee$ are linearly independent. Then we have for all $\mathbf{v} \in \FF_2^e$ the estimate
\[
\left|\left|\left\{i \in S_{\textup{pre}} : \pi_i(v_j) = \pi_j(\mathbf{v}) \textup{ for all } 1 \leq j \leq e\right\}\right| - \frac{\alpha_{\textup{pre}}}{2^e}\right| \leq 
\frac{100^e \cdot  k_{\textup{gap}}}{\log \log \log \log N}.
\]
\end{lemma}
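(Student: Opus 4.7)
The plan is to apply Fourier inversion on $\FF_2^e$, reducing the counting problem to character sums indexed by $\epsilon \in \FF_2^e$, each of which either gives the main term or can be bounded using equation (\ref{eGen2}). Specifically, using the identity $\mathbf{1}_{x = y} = \frac{1}{2}(1 + (-1)^{x + y})$ for $x, y \in \FF_2$, we have
\[
N(\mathbf{v}) := \left|\left\{i \in S_{\textup{pre}, 4} : \pi_i(v_j) = \pi_j(\mathbf{v}) \textup{ for all } 1 \leq j \leq e\right\}\right| = \frac{1}{2^e} \sum_{\epsilon \in \FF_2^e} (-1)^{\langle \epsilon, \mathbf{v}\rangle} \sum_{i \in S_{\textup{pre}, 4}} (-1)^{\pi_i(T_\epsilon)},
\]
where $T_\epsilon := \sum_{j = 1}^e \epsilon_j v_j$ and $\langle \epsilon, \mathbf{v}\rangle := \sum_j \epsilon_j \pi_j(\mathbf{v})$. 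The $\epsilon = 0$ term contributes exactly $\alpha_{\textup{pre}}/2^e$, which is the claimed main term.

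For the error term I would show that every non-trivial $\epsilon$ is handled by equation (\ref{eGen2}). Decompose $T_\epsilon = T_1 + T_2$ with $T_1 := \sum_{j \leq d} \epsilon_j v_j \in D_{a, 2}$ and $T_2 := \sum_{j > d} \epsilon_j v_j \in D_{a, 2}^\vee$. The assumed linear independence of $v_1, \ldots, v_d, L$ in $D_{a, 2}$ forces $T_1 \in \langle L \rangle$ to entail $T_1 = 0$, hence $\epsilon_j = 0$ for $j \leq d$; symmetrically, $T_2 \in \langle R\rangle$ forces $\epsilon_j = 0$ for $j > d$. Therefore $\epsilon \neq 0$ implies $T_1 \notin \langle L\rangle$ or $T_2 \notin \langle R\rangle$, so equation (\ref{eGen2}) applies to $T_\epsilon = T_1 + T_2$, yielding
\[
\left|\sum_{i \in S_{\textup{pre}, 4}} (-1)^{\pi_i(T_\epsilon)}\right| = \left|2 \left|\left\{i \in S_{\textup{pre}, 4} : \pi_i(T_\epsilon) = 0\right\}\right| - \alpha_{\textup{pre}}\right| \leq \frac{2\alpha_{\textup{pre}}}{\log \log \log \log N}.
\]

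Summing the Fourier expansion over the $2^e - 1$ non-zero $\epsilon$ and bounding $\alpha_{\textup{pre}} \leq k_{\textup{gap}}/2$ via equation (\ref{eGen1}) for $N$ large enough gives
\[
\left|N(\mathbf{v}) - \frac{\alpha_{\textup{pre}}}{2^e}\right| \leq \frac{2^e - 1}{2^e} \cdot \frac{2\alpha_{\textup{pre}}}{\log \log \log \log N} \leq \frac{k_{\textup{gap}}}{\log \log \log \log N},
\]
which is (comfortably) the desired bound with constant $100^e$. The only non-mechanical step is the linear-independence verification that places $T_\epsilon$ inside the regime of equation (\ref{eGen2}); everything else is bookkeeping for the Fourier expansion. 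No real obstacle arises, which matches the spirit of this being a preparatory lemma.
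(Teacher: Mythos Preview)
Your argument is correct and is the standard Fourier-inversion approach; the paper itself simply cites \cite[Lemma 13.7]{KP}, which is presumably the same computation. One minor remark: you invoke equation (\ref{eGen1}) to bound $\alpha_{\textup{pre}}$, but the lemma only assumes (\ref{eGen2}); this is harmless since $\alpha_{\textup{pre}} \leq k_{\textup{gap}}$ holds trivially from the definition of $S_{\textup{pre},4}$.
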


\begin{proof}
This is a small adjustment of Lemma 13.7 in \cite{KP}. We stress that the term generic in \cite[Lemma 13.7]{KP} is an unfortunate clash of terminology, and refers to $a$ satisfying the natural analogue of our equation (\ref{eGen2}).
\end{proof}

We have a completely similar result for the range $k_{\text{gap}} < i \leq 2k_{\text{gap}}$ using equation (\ref{eGen3}). This brings us to our next reduction step.

\begin{prop}
\label{pArt}
Let $\ell$ be an integer such that $|\ell|$ is a prime $3$ modulo $4$. There are $c, A, N_0 > 0$ such that for all integers $N > N_0$, all integers $m \geq 2$, all sequences of integers $n_2 \geq \dots \geq n_m \geq 0$, all $N$-good boxes $X$, all $(N, m, X)$-acceptable $a \in \textup{Map}(M \sqcup M_\varnothing, \{\pm 1\})$, all sequences of valid Artin pairings $(\textup{Art}_{a, i})_{2 \leq i \leq m - 1}$ with $n_{a, i} = n_i$ for $2 \leq i \leq m$ and an Artin pairing $\textup{Art}_{a, m} : D_{a, m} \times D_{a, m}^\vee \rightarrow \FF_2$ with $R$ in the right kernel
\begin{multline*}
\left||X(a, (\textup{Art}_{a, i})_{2 \leq i \leq m})|  - 2^{-n_m(n_m + 1)} \cdot |X(a, (\textup{Art}_{a, i})_{2 \leq i \leq m - 1})|\right| \leq 
\frac{A \cdot |X(a)|}{(\log \log \log \log N)^{\frac{c}{m 6^m}}}.
\end{multline*}
\end{prop}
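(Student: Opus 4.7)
The strategy is Fourier analysis on the finite abelian group $V := \textup{Mat}(n_{a,m}+1, n_{a,m}, \FF_2)$. Writing
\[
\mathbf{1}\bigl[\textup{Art}_{a, m, d} = \textup{Art}_{a, m}\bigr] = \frac{1}{2^{n_{a,m}(n_{a,m}+1)}} \sum_{F \in \widehat{V}} F\bigl(\textup{Art}_{a, m, d} - \textup{Art}_{a, m}\bigr),
\]
the trivial character contributes the claimed main term $2^{-n_{a,m}(n_{a,m}+1)} |X(a, \{\textup{Art}_{a, i}\}_{2 \leq i \leq m - 1})|$. It therefore suffices to prove, uniformly for every non-trivial character $F : V \to \{\pm 1\}$, that the character sum
\[
\Sigma(F) := \sum_{d \in X(a, \{\textup{Art}_{a, i}\}_{2 \leq i \leq m - 1})} F(\textup{Art}_{a, m, d})
\]
is bounded by $|X(a)| \cdot (\log \log \log \log N)^{-c/(m 6^m)}$ for some absolute $c > 0$. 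Expand $F$ in the $E_{j_1, j_2}$ basis of Definition \ref{dVar}. Since $a$ is very $(N, m, X)$-acceptable, Lemma \ref{lClique} applied to the clique graph on $S_{\textup{pre}, 4}$ (producing a clique of size $m$) together with Lemma \ref{lBipartite} applied to the bipartite function $f$ on $S_{\textup{pre}, 4} \times S_{\textup{post}, 4}$ supply a set $S \subseteq [r]$ of variable indices for $F$ as in Definition \ref{dVar}.

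Now split the sum $\Sigma(F)$ as an outer sum over $Q \in \pi_{[r] - S}(X(a))$ and an inner sum over primes in the coordinates of $S$. For each fixed $Q$, invoke Lemma \ref{lASGov} on the variable sub-box to manufacture governing expansions $\phi_{\bar{x}, \cdot}$ compatible with the congruence data of $Q$. Rewrite each non-zero entry $E_{j_1, j_2}(\textup{Art}_{a, m, d})$ of the matrix through the reflection principles:
\begin{itemize}
\item If $j_1 \leq n_{a,m}$, apply Theorem \ref{tRefAgr} (or Theorem \ref{tRefMin} when agreement collapses to minimality), which identifies $\textup{Art}_{a, m, d}(v_{j_1}, w_{j_2})$ with a Frobenius value $\phi_{\bar{x}, x_{j_1}}(\textup{Frob}\, x_{i_2})$, where the expansion field is independent of the active prime occupying the coordinate $i_2(j_1, j_2) \in S$.
\item If $j_1 = n_{a,m} + 1$, the entry equals $\textup{Art}_{a, m, d}(L, w_{j_2})$ and is controlled by the ideal above $l$. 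The conditions in Definition \ref{dVar} (indices in $S$ lie in $S_{\textup{pre},4} \cup \{i_2\} \subseteq \{i : a(i,-1)=1\}$, and $a(i,j) = 1$ for all distinct $i, j \in S$) guarantee that the $4$-tuple $(A_1, A_2, \psi_1, \psi_2)$ built from the governing expansion over $A = \{\chi_l\} \cup \{\chi_p : p \in \pi_i(\bar{x}), i \in S - \{i_2\}\}$ and pointer vector $(\chi_{x_{i_2}}, \text{entry-selector for } w_{j_2})$ is R\'edei admissible. Theorem \ref{Redei reciprocity} then flips the symbol, replacing $\textup{Art}(l, L(\psi))$ by $\textup{Art}(x_{i_2}, L(\psi'))$ for a field $L(\psi')$ that depends on $l$ and $Q$ but not on the active prime $x_{i_2}$.
\end{itemize}

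After these reflections, the inner character sum becomes a product of Legendre-type conditions on the active primes $x_{i_1(j_1, j_2)}$ and $x_{i_2(j_1, j_2)}$, evaluated against multiquadratic or R\'edei-type fields independent of those active primes. Summing the active coordinates over $X_{i_2}$ (and $X_{i_1}$ in the case $j_1 \leq n_{a,m}$) subject to the Artin constraints encoding $\{\textup{Art}_{a, i}\}_{2 \leq i \leq m - 1}$ and invoking Proposition \ref{p6.3} (with $\mathcal{M}, \mathcal{N}$ encoding these constraints, and the largeness hypothesis $|X_j(a, Q)| \geq |X_j| / (\log t_{k+1})^{100}$ from Definition \ref{dRedei} feeding condition (iii) of the proposition) yields equidistribution with power-of-log-log savings. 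The contribution of the outer tuples $Q$ for which the Chebotarev step fails is absorbed via the poor-$Q$ estimate of Proposition \ref{p6.3}. Summing all such bounds and using the restriction $n_{a, 2} \leq n_{\textup{max}}$ to limit the number of characters $F$ that must be controlled yields the claimed error term.

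The principal obstacle is the reciprocity flip in the last-row case $j_1 = n_{a, m} + 1$, which has no analogue in Smith's treatment: verifying R\'edei admissibility forces the disjoint-ramification, complete-splitting, and local-triviality hypotheses built into Definition \ref{dVar} and the fourth bullet of Definition \ref{dRedei}. A secondary technical difficulty is showing that the governing expansions produced by Lemma \ref{lASGov} actually satisfy the hypotheses of Theorems \ref{tRefMin} and \ref{tRefAgr} along the variable coordinates; this is handled by checking that the $(N, m, X)$-acceptability conditions imply the required local-square and splitting hypotheses on the primes in $\pi_{S}(\bar{x})$, as in the corresponding arguments in Section~7 of Smith and Section~13 of \cite{KP}.
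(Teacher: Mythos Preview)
Your Fourier decomposition and selection of variable indices $S$ match the paper's opening moves, and your instinct to treat the case $j_1 = n_{a,m}+1$ via Theorem~\ref{Redei reciprocity} is correct. But the core of the argument has a genuine gap.

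The reflection principles (Theorems~\ref{tRefMin} and~\ref{tRefAgr}) and the reciprocity law do \emph{not} rewrite $F(\textup{Art}_{a,m,x})$ for a single $x$; they only control the alternating sum over a full cube $\bar{x} \in \overline{X}_S$, namely
\[
\sum_{x \in \bar{x}(\emptyset)} \iota\bigl(F(\textup{Art}_{a,m,x})\bigr) = \phi_{\pi_{S' - \{i_1\}}(\bar{x}), c}\bigl(\textup{Frob}(p_1)\cdot \textup{Frob}(p_2)\bigr).
\]
This is equation~(\ref{eRef}) of Lemma~\ref{lAS}. Your sentence ``After these reflections, the inner character sum becomes a product of Legendre-type conditions on the active primes $x_{i_1}$ and $x_{i_2}$'' is therefore wrong: you have not expressed $F(\textup{Art}_{a,m,x})$ pointwise, only its $d$-image on cubes. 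Converting control of $dF$ on cubes into equidistribution of $F$ on points is precisely the content of Proposition~\ref{pdF}, applied with the additive system $\mathfrak{A}$ built in Lemma~\ref{lAS} to encode the constraints $\{\textup{Art}_{a,i}\}_{2 \leq i \leq m-1}$. Your proposal never invokes either of these, and Proposition~\ref{p6.3} cannot substitute: that proposition equidistributes Legendre symbols, not values of $F$ modulo cube relations.

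Concretely, the paper proves Proposition~\ref{pArt} by reducing to Proposition~\ref{pF} (fix $Q$ on $[k_{\textup{gap}}] - S$ only, not on all of $[r] - S$; Proposition~\ref{p6.3} is used here merely to discard poor $Q$), then to Proposition~\ref{pFinal} (pass to a satisfactory product space $\widetilde{Z}$ carrying governing expansions), and finally applies Chebotarev in the governing field $M(Z)$ together with Proposition~\ref{pdF} on the space $Z \times [M_{\textup{box}}]$. The Chebotarev step equidistributes $\textup{Frob}(x_{i_2})$ in $\Gal(M(Z)/M_\circ(Z)) \cong \mathscr{G}_{S'}(Z)$, and Proposition~\ref{pdF} converts this into the claimed oscillation of $F$. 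Your outline is missing this entire combinatorial layer.
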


\begin{remark}
We do not need to assume that $(\textup{Art}_{a, i})_{2 \leq i \leq m - 1}$ is valid, but it suffices for our purposes and avoids some casework later on.
\end{remark}

\begin{proof}[Proof that Proposition \ref{pArt} implies Proposition \ref{pBox}]
We observe that $i(X(a)) \cap S_r(N, \ell) \neq \varnothing$ implies $a \in \widetilde{\text{Map}}(M \sqcup M_\varnothing, \{\pm 1\}, \tilde{j}, \ell)$. Hence we can bound
\begin{align*}
&\left|\left|i(X) \cap \bigcap_{i = 2}^{m + 1} D_{\ell, i}(n_i)\right| - \frac{P(n_m, n_m, n_{m + 1})}{2^{n_m}} \cdot \left|i(X) \cap \bigcap_{i = 2}^m D_{\ell, i}(n_i)\right|\right| \leq \\
&\sum_{a \in \widetilde{\text{Map}}(M \sqcup M_\varnothing, \{\pm 1\}, \tilde{j}, \ell)} \left|\left|i(X(a)) \cap \bigcap_{i = 2}^{m + 1} D_{\ell, i}(n_i)\right| - \frac{P(n_m, n_m, n_{m + 1})}{2^{n_m}} \cdot \left|i(X(a)) \cap \bigcap_{i = 2}^m D_{\ell, i}(n_i)\right|\right|.
\end{align*}
We split this sum over the $(N, m, X)$-acceptable $a \in \text{Map}(M \sqcup M_\varnothing, \{\pm 1\})$ and the remaining $a$. For the $(N, m, X)$-acceptable $a$ we may apply Proposition \ref{pArt} by further splitting the sum over all possible sequences of valid Artin pairings and an Artin pairing $D_{a, m} \times D_{a, m}^\vee \rightarrow \FF_2$ with $R$ in the right kernel. 

Note that in the set of bilinear pairings $D_{a, m} \times D_{a, m}^\vee \rightarrow \FF_2$ with $R$ in the right kernel, there are precisely 
\[
2^{n_m(n_m + 1)} \cdot \frac{P(n_m, n_m, n_{m + 1})}{2^{n_m}}
\]
such that the left kernel has dimension $n_{m + 1} + 1$ and $L$ is in the left kernel. There are at most $2^{mn_{\text{max}}^2}$ sequences of Artin pairings, so we stay within the error term of Proposition \ref{pBox} provided that we take the constant $c'$ in the definition of $n_{\text{max}}$ smaller than the constant $c$ guaranteed by Proposition \ref{pArt}.

Hence it suffices to bound
\[
\sum_{\substack{a \in \widetilde{\text{Map}}(M \sqcup M_\varnothing, \{\pm 1\}, \tilde{j}, \ell) \\ a \text{ not } (N, m, X)\text{-acceptable}}} |i(X(a))|.
\]
We first tackle those $a$ for which $n_{a, 2} > n_{\text{max}}$. These $a$ can easily be dealt with using equation (\ref{e4rankX}) for $k \leq n_{\text{max}}$ inducing an error of size
\[
O\left(\frac{|i(X) \cap S_r(N, \ell)|}{(\log \log \log \log N)^{\frac{c}{m^2 6^m}}}\right)
\]
for some absolute constant $c > 0$.

We will now dispatch those $a$ that fail equation (\ref{eXjlarge}). We declare two maps $a, a' \in \widetilde{\text{Map}}(M \sqcup M_\varnothing, \{\pm 1\}, \tilde{j}, \ell)$ to be equivalent at some integer $i > k$, written as $a \sim_i a'$, if
\[
a(j, i) = a'(j, i) \text{ for all } 1 \leq j \leq k \text{ and } a(i, -1) = a'(i, -1).
\]
Observe that if $a$ fails equation (\ref{eXjlarge}), then so does any $a'$ with $a \sim_i a'$. We call an equivalence class bad if there exists some $a$ in its equivalence classes failing equation (\ref{eXjlarge}). In a given bad equivalence class we clearly have the bound
\[
\left|\bigcup_{a' : a \sim_i a'} X(a')\right| \leq \frac{|X|}{(\log t_{k + 1})^{100}}.
\]
A simple computation shows that we stay within the error term of Proposition \ref{pBox} when we sum over all $i$ and all bad equivalence classes.

We still have to deal with those $a$ failing equation equation (\ref{eGen2}) or equation (\ref{eGen3}). Call $a$ generic if $D_{a, 2} \cap D_{a, 2}^\vee = \{0\}$, where we view $D_{a, 2}$ and $D_{a, 2}^\vee$ as subspaces of $V$. Let us now suppose that $r = r'(a, X)$, the other case can be dealt with similarly. Take a non-zero vector $v \in \FF_2^r$ with $\lambda$ ones with $v \neq L$ and $v \neq R$. We claim that
\[
\frac{|\{a \in \widetilde{\text{Map}}(M \sqcup M_\varnothing, \{\pm 1\}, \tilde{j}, \ell) : v \in D_{a, 2} \cap D_{a, 2}^\vee, r = r'(a, X)\}|}{|\{a \in \widetilde{\text{Map}}(M \sqcup M_\varnothing, \{\pm 1\}, \tilde{j}, \ell) : r = r'(a, X)\}|} = O(2^{-r - \lambda}).
\]
We have that the proportion of $a$ with $v \in D_{a, 2}$ is equal to $O(2^{-r})$. Furthermore, the condition that also $v \in D_{a, 2}^\vee$ implies that for every $i$ with $\pi_i(v) = 1$ we have $a(i, -1) = 1$. These are $O(2^{-\lambda})$ independent extra conditions giving a total of $O(2^{-r-\lambda})$. This establishes the claim. For the case that $v = L$ or $v = R$, we make fundamental use of the fact that $|\ell|$ is equivalent to $3$ modulo $4$ to show that the above proportion is still $O(2^{-r})$.

Summing over all non-zero vectors $v \in V$ then gives that the proportion of $a \in \widetilde{\text{Map}}(M \sqcup M_\varnothing, \{\pm 1\}, \tilde{j}, \ell)$, which are not generic, is bounded by
\[
O\left(\sum_{\lambda = 1}^r 2^{-r - \lambda} \binom{r}{\lambda}\right) = O\left(0.75^r\right).
\]
Take some $v, w \in V$. Recall that the proportion of $a$ with $v \in D_{a, 2}$ is bounded by $O(2^{-r})$ provided that $v \not \in \langle L \rangle$. Similarly, the proportion of $a$ with $w \in D_{a, 2}^\vee$ is bounded by $O(2^{-r})$ if $w \not \in \langle R \rangle$. Finally, if $a$ is generic, the proportion of $a$ with $(v, w) \in D_{a, 2} \times D_{a, 2}^\vee$ is bounded by $O(4^{-r})$ as long as $v \not \in \langle L \rangle$ and $w \not \in \langle R \rangle$. 

But Hoeffding's inequality yields that the proportion of $(v, w) \in V \times V$ satisfying
\[
\left|\left|\left\{i \in S_{\textup{pre}} : \pi_i(v + w) = 0\right\}\right| - \frac{\alpha_{\textup{pre}}}{2}\right| > \frac{\alpha_{\textup{pre}}}{\log \log \log \log N}
\]
is at most
\[
O\left(\exp\left(-(\log \log \log \log N)^{-2} \cdot \alpha_{\textup{pre}}\right)\right).
\]
From the last two observations we quickly deduce that the proportion of generic $a$ for which equation (\ref{eGen2}) fails is also bounded by
\[
O\left(\exp\left(-(\log \log \log \log N)^{-2} \cdot \alpha_{\textup{pre}}\right)\right),
\]
and a similar argument applies for the proportion of $a$ failing equation (\ref{eGen3}).

We have now found an upper bound for the proportion of $a$ failing equation (\ref{eGen2}) or equation (\ref{eGen3}). We therefore obtain an upper bound for the proportion of $a$ that are not $(N, m, X)$-acceptable. To finish the proof, we merely need to bound the union of $X(a)$ over these $a$. This follows from Proposition \ref{p6.4}.
\end{proof}

We remark that we can always find variable indices as in Definition \ref{dVar} if $a$ is $(N, m, X)$-acceptable and $N$ is sufficiently large. This is a simple computation once we use that
\begin{align}
\label{eBoundm}
m < \log \log \log \log \log \log N,
\end{align}
since otherwise Theorem \ref{tHeuristic} is trivial. We now have all the required setup for our next proposition, where we fix one prime for all indices smaller than $k_{\textup{gap}}$ except the variable indices.

\begin{prop}
\label{pF}
Let $\ell$ be an integer such that $|\ell|$ is a prime $3$ modulo $4$. There are $c, A, N_0 > 0$ such that for all integers $N > N_0$, all integers $m \geq 2$, all $N$-good boxes $X$, all $(N, m, X)$-acceptable $a \in \textup{Map}(M \sqcup M_\varnothing, \{\pm 1\})$, all sequences of valid Artin pairings $(\textup{Art}_{a, i})_{2 \leq i \leq m - 1}$, all non-zero multiplicative characters $F : \textup{Mat}(n_{a, m} + 1, n_{a, m}, \FF_2) \rightarrow \{\pm 1\}$, all sets of variable indices $S$ for $F$ and all $Q \in \prod_{i \in [k_{\textup{gap}}] - S} X_i$ such that
\begin{align}
\label{eXjQlarge}
|X_j(a, Q)| \geq 4^{-k_{\textup{gap}}} \cdot |X_j|
\end{align}
for all $j \in S$, we have
\[
\left|\sum_{x \in X(a, Q, (\textup{Art}_{a, i})_{2 \leq i \leq m - 1})} F\left(\textup{Art}_{a, m, i(x)}\right)\right| \leq 
\frac{A \cdot |X(a, Q)|}{(\log \log \log \log N)^{\frac{c}{m 6^m}}}.
\]
Here $X(a, Q, (\textup{Art}_{a, i})_{2 \leq i \leq m - 1})$ is defined as the subset of $x \in X(a, (\textup{Art}_{a, i})_{2 \leq i \leq m - 1})$ with $\pi_i(x)$ equal to $\pi_i(Q)$ for $i \in [k_{\textup{gap}}] - S$.
\end{prop}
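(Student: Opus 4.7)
The plan follows Smith's cancellation strategy, enhanced by the higher R\'edei reciprocity of Theorem \ref{Redei reciprocity} to handle the final row of $F$, which corresponds to the Artin pairing against the ideal $\mathfrak{l}$ above $l$. I would expand $F(\textup{Art}_{a,m,i(x)})$ as a product $\prod_{(j_1,j_2)} E_{j_1,j_2}(\textup{Art}_{a,m,i(x)})^{c_{j_1,j_2}(F)}$ and open the sum over $x$ as an iterated sum over the variable primes $(x_i)_{i \in S}$. The aim is to rewrite each nonzero entry as a Frobenius value of a fixed expansion map evaluated at the variable prime at $i_2(j_1,j_2)$, and then to obtain cancellation via Chebotarev on the innermost sum.

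For entries with $j_1 \leq n_{a,m}$ (the interior rows), I would apply Theorem \ref{tRefMin} or Theorem \ref{tRefAgr} on the cube at the variable indices $i_1(j_1,j_2), i_2(j_1,j_2)$, producing an expansion map $\phi$ of order $m-1$ whose Frobenius at the variable prime indexed by $i_2$ equals the entry. For the row $j_1 = n_{a,m} + 1$, where the entry is $\textup{Art}_{a,m}(L, w_{j_2})$ and $L$ encodes $\mathfrak{l}$, I would invoke higher R\'edei reciprocity: form the $4$-tuple $(A_1, A_2, \psi_1, \psi_2)$ with pointers $\chi_l$ (at the fixed index $\tilde{j} = i_1$) and $\chi_{x_{i_2}}$ (at the variable prime), and with base set $A$ coming from the $m-1$ primes at $S - \{i_2\} \subseteq S_{\textup{pre},4}$. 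Definition \ref{dVar} enforces exactly the conditions needed for R\'edei admissibility: $a(i,j) = 1$ across $S$ ensures the complete-splitting requirements between the ramification sets of $\chi_l$ and the variable primes in the opposite $M(\psi_{\cdot})$; the joint vanishing locus requirement on $S - \{i_1, i_2\}$ guarantees the unramified condition on $L(\psi_1), L(\psi_2)$ over their multiquadratic bases; the prebox structure and unconditional ramification of $l$ give the disjoint-ramification clause. Theorem \ref{Redei reciprocity} then equates the Artin symbol of $\mathfrak{l}$ in $L(\psi_1)$ with the Artin symbol of the variable prime at $i_2$ in $L(\psi_2)$, an expansion map depending on $l$ and the remaining variable primes.

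Once every entry is rewritten in this way, the product $F(\textup{Art}_{a,m,i(x)})$ becomes $\phi_{\bar{x},a'}(\textup{Frob}\, x_{i_2})$ for a master expansion $\phi_{\bar{x},a'}$ on $S - \{i_2\}$, constructed by the governing-expansion machinery of Lemma \ref{lASGov} (with $\Omega$ containing all fixed primes and $\infty$) and combined using the additivity relation (\ref{ePhiAdd}); the hypotheses (\ref{eXjlarge}) and (\ref{eXjQlarge}) together with Theorem \ref{tSpacing} guarantee that the underlying additive system is $(a,S)$-acceptable. The inner sum over $x_{i_2} \in X_{i_2}(a, Q')$ is then evaluated by Proposition \ref{p6.3}, yielding square-root-type cancellation with the stated power of $\log\log\log\log N$ savings after aggregating over the remaining variable primes, where Proposition \ref{pdF} absorbs obstructions into a universal $g(\bar{x})$. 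The main obstacle will be uniformly verifying the R\'edei admissibility of the $4$-tuple across the iterated sum: each variable prime, when it rotates into the role of $\chi_2$, must split in the $M(\psi_1)$ attached to every earlier subexpansion, and this is structurally why Definition \ref{dVar} forces the elaborate combinatorial constraints (joint vanishing locus; $a(i,j)=1$ across $S$) and why these constraints must be threaded carefully through the additive-system construction.
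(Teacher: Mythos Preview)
Your proposal assembles the right ingredients but has a structural misconception in how they combine. You write that the reflection principles produce ``an expansion map $\phi$ \ldots\ whose Frobenius at the variable prime indexed by $i_2$ equals the entry'', and that ``the product $F(\textup{Art}_{a,m,i(x)})$ becomes $\phi_{\bar{x},a'}(\textup{Frob}\, x_{i_2})$''. This is not what Theorems \ref{tRefMin} and \ref{tRefAgr} give: they assert relations over a \emph{cube} $\bar{x} \in \overline{X}_S$, namely that $\sum_{x \in \bar{x}(\emptyset)} \textup{Art}_{|S|,x}(\cdot,\cdot)$ equals an expansion-map value (or zero). There is no pointwise formula for $E_{j_1,j_2}(\textup{Art}_{a,m,i(x)})$ at a single $x$, and hence no way to ``open the sum over $x$ as an iterated sum over the variable primes'' and cancel on an inner sum. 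The role of Proposition \ref{pdF} is precisely to extract equidistribution of a function on a box from control of $dF(\bar{x}) = \sum_{x \in \bar{x}(\emptyset)} \iota(F(\textup{Art}_{a,m,x}))$ on cubes, and that cube-sum is what the reflection principles compute (this is equation (\ref{eRef})). In the $j_1 = n_{a,m}+1$ case, Theorem \ref{Redei reciprocity} is likewise not applied directly to the Artin entry: Theorem \ref{tRefAgr} first converts the cube-sum into $\phi_{\pi_{S'}(\bar{x}), p_1p_2}(\textup{Frob}\, l)$, and only then does reciprocity swap this to $\phi_{\pi_{S'}(\bar{x}), l}(\textup{Frob}\, p_1 \cdot \textup{Frob}\, p_2)$.

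This misconception also hides a missing reduction. The paper does not attack the sum over $X(a,Q)$ directly; it first partitions into \emph{satisfactory product spaces} $\widetilde{Z}$ (Definition \ref{dWGov}), where $Z \subseteq \prod_{i \in S'} X_i$ is a small subbox on which the governing expansions of Lemma \ref{lASGov} are guaranteed to exist. Only inside such a $\widetilde{Z}$ can one build the additive system $\mathfrak{A}$ of Lemma \ref{lAS} to which Proposition \ref{pdF} applies; the cubes must live in $\widetilde{Z}$, not in all of $X(a,Q)$. The actual cancellation on the large variable prime $x_{i_2}$ then comes from Chebotarev applied to $M(Z)/M_\circ(Z)$, equidistributing $\textup{Frob}\, p$ over $\mathscr{G}_{S'}(Z)$ (equation (\ref{eChebotarev})), after which Proposition \ref{pdF} converts the cube information into a bound on $|F^{-1}(0)|$. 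Proposition \ref{p6.3}, which you invoke for the inner cancellation, concerns equidistribution of Legendre symbols and is used only to discard poor $Q'$ in the reduction, not to produce oscillation of the Artin data.
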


\begin{proof}[Proof that Proposition \ref{pF} implies Proposition \ref{pArt}]
The proof is almost identical to the one given in \cite[Proof that Proposition 6.9 implies Proposition 6.6]{CKMP}. We have to show that equation (\ref{eXjQlarge}) is typically satisfied. We apply Proposition \ref{p6.3} to
\[
(X_{k + 1}(a, Q') \times \dots \times X_r(a, Q'), Q'),
\]
where $Q'$ is the unique element of $X_1 \times \dots \times X_k$. Crucially, all the required conditions for Proposition \ref{p6.3} are satisfied due to equation (\ref{eXjlarge}), completing our reduction step.
\end{proof}

It is time for our final reduction step. If $c_{j_1, j_2}(F) \neq 0$ for some $1 \leq j_1, j_2 \leq n_{a, m}$, Smith's method applies without any significant changes. If however $c_{j_1, j_2}(F) = 0$ for all $1 \leq j_1, j_2 \leq n_{a, m}$, Smith's method breaks down. It is here that we make essential use of our generalized R\'edei reciprocity law. 

We shall now add the algebraic structure needed to apply our reflection principles. The required equidistribution will then be a consequence of the Chebotarev Density Theorem and Proposition \ref{pdF}. From now on we shall make heavy use of the notation introduced in Subsections \ref{ssNot} and \ref{ssGov}.

\begin{mydef}
\label{dWGov}
Take a $N$-good box $X$, a $(N, m, X)$-acceptable $a \in \textup{Map}(M \sqcup M_\varnothing, \{\pm 1\})$ and a non-zero multiplicative character $F : \textup{Mat}(n_{a, m} + 1, n_{a, m}, \FF_2) \rightarrow \{\pm 1\}$. Let $S$ be a set of variable indices for $F$. Fix a choice of $j_1$ and $j_2$ with $c_{j_1, j_2}(F) \neq 0$ as in Definition \ref{dVar}. Put $S' := S \cap [k_{\textup{gap}}]$. For each $i \in S'$, let $Z_i$ be subsets of $X_i$ with cardinality
\[
M_{\textup{box}} := \left \lfloor (\log \log \log \log N)^{\frac{1}{5(m + 1)}} \right \rfloor.
\]
Note that $M_{\textup{box}} \geq 2$ for $N$ greater than an absolute constant by equation (\ref{eBoundm}). Put
\[
Z := \prod_{i \in S'} Z_i, \quad Z' := \prod_{i \in S' - \{i_1(j_1, j_2)\}} Z_i.
\]
If $j_1 \leq n_{a, m}$, we say that $Z$ is well-governed for $(a, F)$ if for every distinct $a_1, a_2 \in Z_{i_1(j_1, j_2)}$ there is a governing expansion $\mathfrak{G}_{j, a_1a_2}$ on $(Z', S' - \{i_1(j_1, j_2)\}, a_1a_2)$, in which all primes in $Q$, $(2)$ and $\infty$ split completely. Put
\[
M_\circ(Z) := \prod_{i \in S'} \prod_{z_1, z_2 \in Z_i} \Q(\sqrt{z_1z_2}) \prod_{\substack{a_1, a_2 \in Z_{i_1(j_1, j_2)} \\ a_1 \neq a_2}} \prod_{T \subsetneq S' - \{i_1(j_1, j_2)\}} \prod_{\bar{x} \in \overline{Y}_T} L(\phi_{\bar{x}, a_1a_2})
\]
and
\[
M(Z) :=  \prod_{\substack{a_1, a_2 \in Z_{i_1(j_1, j_2)} \\ a_1 \neq a_2}} \prod_{\bar{x} \in \overline{X}_{S' - \{i_1(j_1, j_2)\}}} L(\phi_{\bar{x}, a_1a_2}).
\]
If $j_1 = n_{a, m} + 1$, we say that $Z$ is well-governed for $(a, F)$ if there is a governing expansion $\mathfrak{G}_\ell$ on $(Z, S', \ell)$, in which all primes in $Q$ coprime to $2 \ell$ split completely and all primes of $\Q(\sqrt{\ell})$ above $(2)$ and $\infty$ split completely, and furthermore for each $j \in S'$ and each distinct $a_1, a_2 \in X_j$ there is a governing expansion $\mathfrak{G}_{j, a_1a_2}$ on $(\prod_{i \in S' - \{j\}} X_i, S' - \{j\}, a_1a_2)$, in which all primes in $Q$, $(2)$ and $\infty$ split completely. Put
\[
M_\circ(Z) := \prod_{T \subsetneq S'} \prod_{\bar{x} \in \overline{X}_T} L(\phi_{\bar{x}, \ell}) \prod_{j \in S'} \prod_{\substack{a_1, a_2 \in X_j \\ a_1 \neq a_2}} \prod_{\bar{x} \in \overline{X}_{S' - \{j\}}} L(\phi_{\bar{x}, a_1a_2})
\]
and
\[
M(Z) := M_\circ(Z) \prod_{\bar{x} \in \overline{X}_{S'}} L(\phi_{\bar{x}, \ell}).
\]
so $M(Z)$ is a central Galois extension of $M_\circ(Z)$ in both cases.

Take some $Q \in \prod_{i \in [k_{\textup{gap}}] - S'} X_i$. Then we define, for $i > k_{\textup{gap}}$, $X_i(a, Q, M_\circ(Z))$ to be the subset of primes $p \in X_i$ such that $p$ splits completely in $M_\circ(Z)$, $p \in X_i(a, Q)$ and
\[
\left(\frac{z}{p}\right) = a(j, i) \textup{ for all } j \in S' \textup{ and all } z \in Z_j.
\]
Note that these conditions are equivalent to $\textup{Frob}_p$ being equal to a given central element in the Galois group of the compositum of $M_\circ(Z)$ and $\Q(\sqrt{x})$ with $x$ running through $-1$, the prime divisors of $Q$ and the primes in $Z_j$ for $j \in S'$.

We let
\[
\widetilde{Z} := Q \times Z \times \prod_{i > k_{\textup{gap}}} X_i(a, Q, M_\circ(Z)).
\]
We call $\widetilde{Z}$ a satisfactory product space for $(X, a, F, Q)$ if 
\begin{itemize}
\item $Z$ is well-governed for $(a, F)$;
\item we have for all $i < j$ with $i, j \in S'$, all $z_i \in Z_i$ and all $z_j \in Z_j$
\[
\left(\frac{z_i}{z_j}\right) = a(i, j);
\]
\item we have $Z_i \subseteq X_i(a, Q)$.
\end{itemize}
\end{mydef}

Once we added the necessary algebraic structure to our box, we can construct a suitable additive system $\mathfrak{A}$ to which we apply Proposition \ref{pdF}. This is the goal of the next lemma, which provides the critical link between our algebraic results and Proposition \ref{pdF}.

\begin{lemma}
\label{lAS}
Let a $(N, m, X)$-acceptable $a \in \textup{Map}(M \sqcup M_\varnothing, \{\pm 1\})$, a sequence of valid Artin pairings $(\textup{Art}_{a, i})_{2 \leq i \leq m - 1}$, a non-zero multiplicative character $F : \textup{Mat}(n_{a, m} + 1, n_{a, m}, \FF_2) \rightarrow \{\pm 1\}$ and a set of variable indices $S$ for $F$ be given. Take $\widetilde{Z}$ to be a satisfactory product space for $(X, a, F, Q)$. Then there is a $(2^{n_{\textup{max}}(n_{\textup{max}} + m + 2)}, S)$-acceptable additive system $\mathfrak{A}$ with $\overline{Y}_\varnothing^\circ(\mathfrak{A}) = \widetilde{Z} \cap X(a, (\textup{Art}_{a, i})_{2 \leq i \leq m - 1})$ such that
\begin{equation}
\label{eRef}
\sum_{x \in \bar{x}(\varnothing)} \iota(F(\textup{Art}_{a, m, x})) = \phi_{\pi_{S' - \{i_1(j_1, j_2)\}}(\bar{x}), c}(\textup{Frob}(p_1) \cdot \textup{Frob}(p_2))
\end{equation}
for all $\bar{x} \in C(\mathfrak{A})$, where $(p_1, p_2) := \pi_{i_2(j_1, j_2)}(\bar{x})$. Here $c$ equals $\textup{pr}_1(\pi_{i_1(j_1, j_2)}(\bar{x})) \textup{pr}_2(\pi_{i_1(j_1, j_2)}(\bar{x}))$ if $j_1 \leq n_{a, m}$ and equals $\ell$ otherwise.
\end{lemma}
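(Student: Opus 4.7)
The plan is to build the additive system $\mathfrak{A}$ inductively on subsets $T \subseteq S$, following Smith's reflection strategy in the case $j_1 \leq n_{a, m}$ and invoking our higher R\'edei reciprocity (Theorem \ref{Redei reciprocity}) to close the argument in the case $j_1 = n_{a, m} + 1$. At the base $T = \emptyset$, I would set $\overline{Y}_\emptyset^\circ(\mathfrak{A}) = \widetilde{Z} \cap X(a, \{\textup{Art}_{a, i}\}_{2 \leq i \leq m-1})$ by letting $F_\emptyset(x)$ record the truncated pairings $(\textup{Art}_{a, i, x})_{2 \leq i \leq m-1}$, landing in a space of dimension $O(m\,n_{\max}^2)$. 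For $\emptyset \neq T \subseteq S$, whenever the proper subcubes of $\bar{x}$ lie in $\overline{Y}^\circ$, the reflection principles (Theorem \ref{tRefMin} in case $j_1 = n_{a, m} + 1$, Theorem \ref{tRefAgr} in case $j_1 \leq n_{a, m}$) produce a raw cocycle at the missing corner of $\bar{x}(\emptyset)$; I would declare $F_T(\bar{x})$ to be the class of this raw cocycle modulo the sublattice compatible with the governing expansion $\phi_{\bar{x}, c}$ provided by Definition \ref{dWGov}. The additivity (\ref{eFAdd}) then follows from (\ref{ePhiAdd}) combined with the $1$-cocycle relation, and the dimension bound $\log_2 |A_T| \leq n_{\max}(n_{\max} + m + 2)$ is obtained by a direct count of the contributing data.

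To verify (\ref{eRef}) on $C(\mathfrak{A})$ in the case $j_1 \leq n_{a, m}$, I would apply Theorem \ref{tRefAgr} to rewrite
\[
\sum_{x \in \bar{x}(\emptyset)} \iota(F(\textup{Art}_{a, m, x})) = \sum_{p \mid b} \phi_{\pi_{S' - \{i_1\}}(\bar{x}), c}(\textup{Frob}\, p),
\]
where $b$ is the integer representative of $v_{j_1}$. Every prime $p \mid b$ outside the variable position $i_2$ is fixed across $\bar{x}$ and, by the construction of $\widetilde{Z}$, splits completely in $M_\circ(Z)$, so its Frobenius contribution is a constant that gets placed into $A_T$ and is therefore forced to vanish on $C(\mathfrak{A})$. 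What remains is exactly the contribution from the two primes $p_1, p_2$ at position $i_2$, namely $\phi(\textup{Frob}\, p_1 \cdot \textup{Frob}\, p_2)$. In the case $j_1 = n_{a, m} + 1$ the analogous step using Theorem \ref{tRefMin} collapses the left-hand sum into a single Artin class $\textup{Art}(l, L(\psi)/\Q)$ attached to the raw cocycle $\psi$ assembled at the top corner. I would then apply Theorem \ref{Redei reciprocity} to the $4$-tuple with pointers $\chi_l$ and $\chi_{p_1 p_2}$, the first expansion being $\psi$ and the second being $\phi_{\pi_{S' - \{\tilde{j}\}}(\bar{x}), l}$; the reciprocity law swaps the Artin class at $l$ for the corresponding class at $\{p_1, p_2\}$ in the governing field, which by the splitting behaviour of $p_1, p_2$ is precisely $\phi_{\pi_{S' - \{\tilde{j}\}}(\bar{x}), l}(\textup{Frob}\, p_1 \cdot \textup{Frob}\, p_2)$.

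The hard part will be verifying R\'edei admissibility of this $4$-tuple. The unramified property comes from the good-expansion hypothesis in Definition \ref{dWGov}. Total splitting of $l$ in $M(\phi_{\pi_{S' - \{\tilde{j}\}}(\bar{x}), l})$ and of $p_1, p_2$ in $M(\psi)$ are engineered into the definition of $\widetilde{Z}$, via splitting of $Q$ in $M_\circ(Z)$ and the requirement that large primes lie in $X_i(a, Q, M_\circ(Z))$. The condition $a(i, j) = 1$ for distinct $i, j \in S$ built into Definition \ref{dVar} is crucial here: it ensures that $\chi_l$ and $\chi_{p_1 p_2}$ sit inside the cooperative structure of the box and that the characters carrying the fixed primes of $A$ are compatible with both pointers. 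Careful bookkeeping of these splitting constraints through the induction, with residual Frobenius data placed into $F_T$, will force $\bar{x} \in C(\mathfrak{A})$ to encode exactly the R\'edei admissibility hypotheses required by Theorem \ref{Redei reciprocity}, thereby validating the final identity.
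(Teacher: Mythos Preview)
Your high-level plan is right, but the execution in the key case $j_1=n_{a,m}+1$ is misconfigured in a way that would not yield (\ref{eRef}). The split between Theorems \ref{tRefMin} and \ref{tRefAgr} is not governed by $j_1$; in \emph{both} cases the additive system is built so that the raw cocycles $\{\psi_{x,w_j,i}\}$ are minimal for $j\neq j_2$ and agree with an expansion for $j=j_2$. What changes is the pointer index $i_a$: it is $i_1$ when $j_1\leq n_{a,m}$, but $i_a=i_2$ when $j_1=n_{a,m}+1$, so the expansion carrying the agreement has pointer $p_1p_2$, not $l$. The governing expansions in Definition \ref{dWGov} do not furnish such a map directly; one must first prove (by induction on $T\subseteq S'$, already invoking Theorem \ref{Redei reciprocity}) that a governing expansion on $(Z,S',q_1q_2)$ exists for any $q_1,q_2\in X_{i_2}(a,Q,M_\circ(Z))$. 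Your proposal omits this step entirely.

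Your route to (\ref{eRef}) when $j_1=n_{a,m}+1$ then breaks down: Theorem \ref{tRefMin} produces $0$, not ``a single Artin class at $l$'', and R\'edei reciprocity is a statement about two \emph{expansion maps}, not about a raw cocycle $\psi$. The correct sequence is to apply Theorem \ref{tRefAgr} with $i_a=i_2$ and $b$ a representative of $L$ (whose only prime is $|l|$, possibly together with $\infty$) to obtain $\sum_x\iota(E_{j_1,j_2}(\textup{Art}_{a,m,x}))=\phi_{\pi_{S'}(\bar{x}),\,p_1p_2}(\textup{Frob}\,l)$, and then apply Theorem \ref{Redei reciprocity} to the pair $\phi_{\pi_{S'}(\bar{x}),p_1p_2}$ and $\phi_{\pi_{S'}(\bar{x}),l}$ (same base set, pointers $\chi_{p_1p_2}$ and $\chi_l$) to swap $l\leftrightarrow\{p_1,p_2\}$.

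In the case $j_1\leq n_{a,m}$ your mechanism for killing the fixed primes is also off. Splitting in $M_\circ(Z)$ only places $\textup{Frob}\,p$ in the central quotient $\Gal(M(Z)/M_\circ(Z))$; it does not force $\phi(\textup{Frob}\,p)=0$, and a constant cannot be ``placed into $A_T$'' since it fails the additivity (\ref{eFAdd}). The actual mechanism is that $|S|=m+1$ while the pairing is at level $m$, so one applies Theorem \ref{tRefAgr} to each of the two $m$-subcubes in the $i_2$-direction; the $\phi$-values at the fixed primes of $b$ are identical on the two subcubes and cancel upon summation, leaving exactly $\phi(\textup{Frob}\,p_1)+\phi(\textup{Frob}\,p_2)$. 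The vanishing of the remaining entries $(j_3,j_4)\neq(j_1,j_2)$ with $c_{j_3,j_4}(F)\neq 0$ likewise requires separate subcube applications of Theorem \ref{tRefMin} (for $j_3\leq n_{a,m}$) or Theorem \ref{tRefAgr} (for $j_3=n_{a,m}+1$), which your proposal does not address.
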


\begin{proof}
We shall proceed to explicitly construct $\mathfrak{A}$ by induction. We start by introducing some notation. Let $w \in D_{a, 2}^\vee$ be one of the chosen basis vectors and let $x \in X(a)$ be given. A raw cocycle for $(x, w)$ is a sequence $(\psi_{x, w, i})_{0 \leq i \leq k}$ of maximal length with $\psi_{x, w, i} \in \text{Cocy}(G_\Q, N(x)[2^i])$, $2\psi_{x, w, i + 1} = \psi_{x, w, i}$, $L(\psi_{x, w, i}) \Q(\sqrt{x})/\Q(\sqrt{x})$ unramified and
\[
\psi_{x, w, 1} = \sum_{i = 1}^{r'(a, X)} \pi_i(w) \chi_i
\]
with $\chi_i$ as in Definition \ref{dMatrix}. We now make a choice of raw cocycle for every $(x, w)$ with $x \in X(a)$. Recall that $i_1(j_1, j_2)$, $i_2(j_1, j_2)$, $j_1$ and $j_2$ are the integers associated to our set of variable indices $S$ as in Definition \ref{dVar}. Set 
$$
\overline{Y}_\varnothing^\circ(\mathfrak{A}) := \widetilde{Z} \cap X(a, (\textup{Art}_{a, i})_{2 \leq i \leq m - 1}).
$$
First suppose that $j_1 \leq n_{a, m}$. To shorten our formulas, we define for $\bar{x} \in \overline{X}_S$ and $i \in S$
\[
\text{prp}(\bar{x}, i) = \textup{pr}_1(\pi_i(\bar{x})) \cdot \textup{pr}_2(\pi_i(\bar{x})).
\]
Let $T \subsetneq S$. We shall construct our maps $F_{T'}$ with $T' \subseteq T$ in such a way that $\overline{Y}_T(\mathfrak{A})$ is precisely the set of cubes $\bar{x}$ satisfying $\bar{x}(\varnothing) \subseteq \overline{Y}_\varnothing^\circ(\mathfrak{A})$ and the following properties

\begin{itemize}
\item we have for all $T' \subsetneq T$ with $i_2(j_1, j_2) \not \in T'$, all $\bar{y} \in \bar{x}(T')$ and all $j \neq j_2$
\[
\sum_{y \in \bar{y}(\varnothing)} \psi_{y, w_j, |T'|} = 0;
\]
\item we have for all $T' \subsetneq T$ with $i_2(j_1, j_2) \not \in T'$ and all $\bar{y} \in \bar{x}(T')$
\[
\sum_{y \in \bar{y}(\varnothing)} \psi_{y, w_{j_2}, |T'|} = \left\{
\begin{array}{ll}
\phi_{\pi_{T' - \{i_1(j_1, j_2)\}}(\bar{y}), \text{prp}(\bar{x}, i_1(j_1, j_2))} & \mbox{if } i_1(j_1, j_2) \in T'\\
0 & \mbox{if } i_1(j_1, j_2) \not \in T';
\end{array}
\right.
\]
\item we have for all $T' \subsetneq T$ with $i_2(j_1, j_2) \not \in T'$, $\bar{y} \in \bar{x}(T')$, all $j$ and $i \in S - T'$
\begin{equation}
\label{eRam}
\sum_{y \in \bar{y}(\varnothing)} \psi_{y, w_j, |T'| + 1}(\sigma_{\pi_i(\bar{x})}) = 0.
\end{equation}
\end{itemize}

Now suppose that $j_1 = n_{a, m} + 1$. Let $T \subseteq S$. In this case we construct our maps $F_{T'}(\mathfrak{A})$ such that $\overline{Y}_T(\mathfrak{A})$ equals the cubes $\bar{x}$ with $\bar{x}(\varnothing) \subseteq \overline{Y}_\varnothing^\circ(\mathfrak{A})$ and

\begin{itemize}
\item we have for all $T' \subsetneq T$ with $i_2(j_1, j_2) \not \in T'$, all $\bar{y} \in \bar{x}(T')$ and all $j$
\[
\sum_{y \in \bar{y}(\varnothing)} \psi_{y, w_j, |T'|} = 0;
\]
\item we have for all $\varnothing \subsetneq T' \subsetneq T$ with $i_2(j_1, j_2) \not \in T'$, all $\bar{y} \in \bar{x}(T')$, all $j$ and all $i \in S - T'$
\[
\sum_{y \in \bar{y}(\varnothing)} \psi_{y, w_j, |T'| + 1}(\sigma_{\pi_i(\bar{x})}) = 0;
\]
\item we have for all $T' \subsetneq T$ with $i_2(j_1, j_2) \not \in T'$, all $\bar{y} \in \bar{x}(T')$ and all $j$
\[
\sum_{y \in \bar{y}(\varnothing)} \psi_{y, w_j, |T'| + 1}(\sigma_\ell) = 0.
\]
\end{itemize}

Let us prove by induction that $\overline{Y}_T(\mathfrak{A})$ is as claimed. We shall construct the map $F_T(\mathfrak{A})$ during the induction. Until otherwise stated, we shall treat the case $j_1 \leq n_{a, m}$. At the end we indicate the modifications necessary to deal with the case $j_1 = n_{a, m} + 1$. Take $\bar{x} \in \overline{Y}_T(\mathfrak{A})$. If $i_2(j_1, j_2) \in T$ or $T = S - \{i_2(j_1, j_2)\}$, we simply let $F_T$ be the zero map. Henceforth we will assume that $i_2(j_1, j_2) \not \in T$ and $|T| < |S| - 1$. Then we define
\[
\psi_j := 
\left\{
	\begin{array}{ll}
		\sum\limits_{x \in \overline{x}(\varnothing)} \psi_{x, w_j, |T|} & \mbox{if } j \neq j_2 \text{ or } i_1(j_1, j_2) \not \in T \\
		\phi_{\pi_{T - \{i_1(j_1, j_2)\}}(\bar{x}), \text{prp}(\bar{x}, i_1(j_1, j_2))} + \sum\limits_{x \in \overline{x}(\varnothing)} \psi_{x, w_{j_2}, |T|}  & \mbox{otherwise.}
	\end{array}
\right.
\]
Then \cite[Proposition 2.9]{KPPell} demonstrates that $\psi_j$ is a quadratic character. We claim that $\psi_j$ is an unramified character of $\Q(\sqrt{x})$ for all $x \in \bar{x}(\varnothing)$.

If $p = \pi_i(\bar{x})$ with $i \not \in T$, this is clear. So suppose that $i \in T$ and write $\pi_i(\bar{x}) = \{p_1, p_2\}$ with $p_1 = \pi_i(x)$. It is clear that $\psi_j$ does not ramify at $p_1$, so it suffices to show that $\psi_j$ does not ramify at $p_2$. Let $\bar{y}_k \in \bar{x}(T - \{i\})$ be the cube with $\pi_i(\bar{y}_k) = p_k$. Then we have
\[
\psi_j(\sigma_{p_2}) = \sum_{x \in \overline{x}(\varnothing)} \psi_{x, w_j, |T|}(\sigma_{p_2}) = \sum_{y \in \overline{y}_1(\varnothing)} \psi_{y, w_j, |T|}(\sigma_{p_2})  + \sum_{y \in \overline{y}_2(\varnothing)} \psi_{y, w_j, |T|}(\sigma_{p_2}) = 0 + 0 = 0.
\]
The first sum is clearly zero, since all the $\psi_{y, w_j, |T|}$ with $y \in \bar{y}_1(\varnothing)$ are unramified at $p_2$. Furthermore, the second sum is zero by equation (\ref{eRam}) with $T' := T - \{i\}$. This proves our claim.

Next we claim that $\pi_i(\bar{x})$ splits completely in $L(\psi_j)$ for all $i \not \in T$. Indeed, $\pi_i(\bar{x})$ has residue field degree $1$ in every $\psi_{x, w_j, |T|}$ for $x \in \bar{x}(\varnothing)$, because $2\psi_{x, w_j, |T| + 1} = \psi_{x, w_j, |T|}$. Furthermore, $\pi_i(\bar{x})$ splits completely in $L(\phi_{\pi_{T - \{i_1(j_1, j_2)\}}(\bar{x}), \text{prp}(\bar{x}, i_1(j_1, j_2))})$, establishing the claim.

Pick some $x \in \bar{x}(\varnothing)$ and let $p \in \pi_i(x)$ for some $i \in T$. It is straightforward to deduce from $\bar{x}(\varnothing) \subseteq X(a)$ that
\[
\psi_j|_{G_{\Q(\sqrt{x})}}(\text{Frob}(\mathfrak{p}))
\]
does not depend on $x$, where $\mathfrak{p}$ is the unique ideal above $p$ in $\Q(\sqrt{x})$. From this, it becomes clear, from the additivity of $\psi_j$, that this defines an additive map $F_{T, j, 1}$ to $\mathbb{F}_2^{|T|}$.

It follows from Lemma \ref{lVar} that there exists a set $A \subseteq [r]$ and a bijection $f: [n_{a, 2} + 1] \rightarrow A$ such that $A \cap S = \varnothing$ and
\[
\pi_{f(i)}(w_k) = \delta_{i, k}
\]
for $1 \leq i, k \leq n_{a, 2}$ and furthermore
\[
\pi_{f(n_{a, 2} + 1)}(w_k) = 0
\]
for all $1 \leq k \leq n_{a, 2}$. Then we define an additive map $F_{T, j, 2}$ to $\mathbb{F}_2^{n_{a, 2} + 1}$ by
\[
(\psi_j(\sigma_{\pi_i(x)}))_{i \in A}.
\]
Finally, we define an additive map $F_{T, j, 3}$ to $\mathbb{F}_2^{|S| - |T|}$ by sending $\bar{x}$ to
\[
\left(\sum_{x \in \bar{x}(\varnothing)} \psi_{x, w_j, |T| + 1}(\sigma_{\pi_i(\bar{x})})\right)_{i \in S - T}.
\]
We define our map $F_T(\mathfrak{A})$ to be $(F_{T, j, 1}, F_{T, j, 2}, F_{T, j, 3})_{1 \leq j \leq n_{a, |T| + 1}}$. Note that the maps $F_{T, j, 1}$ and $F_{T, j, 2}$ encode precisely when $\psi_j = 0$. From this it becomes clear that $\overline{Y}_T(\mathfrak{A})$ has the claimed shape.

Our next task is to verify that our additive system is $(2^{n_{\text{max}}(n_{\text{max}} + m + 2)}, S)$-acceptable. For the first requirement, this follows from the construction of $F_T$ above and the inequality $n_{a, 2} \leq n_{\text{max}}$. We still need to deal with the second requirement. Take $\bar{x} \in C(\mathfrak{A})$. If there is some $i \in S$ such that
\[
|\bar{x}(S - \{i\}) \cap \overline{Y}_{S - \{i\}}^\circ(\mathfrak{A})| = 2,
\]
then we are done. Henceforth we assume that
\[
|\bar{x}(S - \{i\}) \cap \overline{Y}_{S - \{i\}}^\circ(\mathfrak{A})| = 1
\]
for all $i \in S$ and let $x_0$ be the unique element in $\bar{x}(\varnothing)$ outside $\bar{x}(S - \{i\}) \cap \overline{Y}_{S - \{i\}}^\circ(\mathfrak{A})$ for all $i \in S$. Then we need to prove that $x_0 \in \overline{Y}_\varnothing^\circ(\mathfrak{A})$. Clearly, $x_0 \in \widetilde{Z} \cap X(a)$. Take an integer $2 \leq m' \leq m - 1$, integers $1 \leq j_1' \leq n_{a, m'} + 1$ and $1 \leq j_2' \leq n_{a, m'}$. It suffices to prove that
\[
\iota(E_{j_1', j_2'}(\text{Art}_{a, m', x_0})) = \iota(E_{j_1', j_2'}(\text{Art}_{a, m'})).
\]
Choose a subset $T$ of $S$ of size $m'$ not containing $i_1(j_1, j_2)$ and $i_2(j_1, j_2)$. Then the above identity follows from Theorem \ref{tRefMin} applied to any cube in $\bar{x}(T)$ containing $x_0$.

We still need to prove equation (\ref{eRef}). Recall that $j_1 \leq n_{a, m}$. Take some indices $(j_3, j_4)$ with $(j_3, j_4) \neq (j_1, j_2)$. We claim that
\[
\sum_{x \in \bar{x}(\varnothing)} \iota(E_{j_3, j_4}(\textup{Art}_{a, m, x})) = 0.
\]
First suppose that $j_3 \leq n_{a, m}$. Then this follows from two applications of Theorem \ref{tRefMin}. In case $j_3 = n_{a, m} + 1$ we apply Theorem \ref{tRefAgr} twice to obtain
\[
\sum_{x \in \bar{x}(\varnothing)} \iota(E_{j_3, j_4}(\textup{Art}_{a, m, x})) = 0.
\]
Here we use equation (\ref{eAgrN}), if $\ell > 0$, and equation (\ref{eAgrI}), if $\ell < 0$. We deduce from another double application of Theorem \ref{tRefAgr} that
\[
\sum_{x \in \bar{x}(\varnothing)} \iota(E_{j_1, j_2}(\textup{Art}_{a, m, x})) = \phi_{\pi_{S' - \{i_1(j_1, j_2)\}}(\bar{x}), \text{prp}(\bar{x}, i_1(j_1, j_2))}(\text{Frob}(p_1) \cdot \text{Frob}(p_2)).
\]
Adding these identities together yields equation (\ref{eRef}). This proves the lemma for $j_1 \leq n_{a, m}$.

It remains to indicate the necessary changes in case $j_1 = n_{a, m} + 1$. In this case we let $F_T$ be the zero map if $i_2(j_1, j_2) \in T$. Otherwise we define
\[
\psi_j := \sum_{x \in \overline{x}(\varnothing)} \psi_{x, w_j, |T|}.
\]
Now we proceed by defining the maps $F_{T, j, i}$ just as in the case $j_1 \leq n_{a, m}$. Then we see that $\mathfrak{A}$ is certainly $(2^{n_{\text{max}}(n_{\text{max}} + m + 2)}, S)$-acceptable. Now we have for all $(j_3, j_4)$ with $j_3 \leq n_{a, m}$
\[
\sum_{x \in \bar{x}(\varnothing)} \iota(E_{j_3, j_4}^{c_{j_3, j_4}(F)}(\textup{Art}_{a, m, x})) = 0
\]
simply because $c_{j_3, j_4}(F) = 0$ by our choice of variable indices. Furthermore, Theorem \ref{tRefMin} shows that for all $(j_3, j_4)$ with $j_3 = n_{a, m + 1}$ and $j_2 \neq j_4$
\[
\sum_{x \in \bar{x}(\varnothing)} \iota(E_{j_3, j_4}(\textup{Art}_{a, m, x})) = 0.
\]
Finally, Theorem \ref{tReflectionEll} implies that
\[
\sum_{x \in \bar{x}(\varnothing)} \iota(E_{j_1, j_2}(\textup{Art}_{a, m, x})) = \phi_{\pi_{S'}(\bar{x}), \ell}(\textup{Frob}(p_1) \cdot \textup{Frob}(p_2))
\]
with $(p_1, p_2) := \pi_{i_2(j_1, j_2)}(\bar{x})$. Hence we conclude that
\[
\sum_{x \in \bar{x}(\varnothing)} \iota(F(\textup{Art}_{a, m, x})) = \phi_{\pi_{S'}(\bar{x}), \ell}(\textup{Frob}(p_1) \cdot \textup{Frob}(p_2)),
\]
which completes the proof of our lemma.
\end{proof}

\begin{prop}
\label{pFinal}
Let $\ell$ be an integer such that $|\ell|$ is a prime $3$ modulo $4$. There are $c, A, N_0 > 0$ such that for all integers $N > N_0$, all integers $m \geq 2$, all $N$-good boxes $X$, all $(N, m, X)$-acceptable $a \in \textup{Map}(M \sqcup M_\varnothing, \{\pm 1\})$, all sequences of valid Artin pairings $(\textup{Art}_{a, i})_{2 \leq i \leq m - 1}$, all non-zero multiplicative characters $F : \textup{Mat}(n_{a, m} + 1, n_{a, m}, \FF_2) \rightarrow \{\pm 1\}$, all sets of variable indices $S$ for $F$, all $Q \in \prod_{i \in [k_{\textup{gap}}] - S} X_i$ and all satisfactory product spaces $\widetilde{Z}$ for $(X, a, F, Q)$
\[
\left|\sum_{x \in \widetilde{Z} \cap X(a, Q, (\textup{Art}_{a, i})_{2 \leq i \leq m - 1})} F\left(\textup{Art}_{a, m, i(x)}\right)\right| \leq 
\frac{A \cdot |\widetilde{Z} \cap X(a, Q)|}{(\log \log \log \log N)^{\frac{c}{m 6^m}}}.
\]
\end{prop}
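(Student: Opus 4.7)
The plan is to combine the additive system of Lemma \ref{lAS} with the balance result of Proposition \ref{pdF}, bridging them by an effective Chebotarev density argument on the governing extensions $M(Z)/M_\circ(Z)$ of Definition \ref{dWGov}.

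First I would apply Lemma \ref{lAS} to the data $(X, a, F, Q, \widetilde{Z})$: since $\widetilde{Z}$ is a satisfactory product space, this yields a $(2^{n_{\textup{max}}(n_{\textup{max}} + m + 2)}, S)$-acceptable additive system $\mathfrak{A}$ whose $\overline{Y}_\emptyset^\circ(\mathfrak{A})$ is exactly the index set $\widetilde{Z} \cap X(a, Q, \{\textup{Art}_{a, i}\}_{2 \leq i \leq m - 1})$ appearing in the proposition, together with the reflection identity (\ref{eRef}). Setting $\tilde{F}(x) := \iota(F(\textup{Art}_{a, m, i(x)}))$, the character sum to bound is $|\tilde{F}^{-1}(0)| - |\tilde{F}^{-1}(1)|$, and (\ref{eRef}) computes $d\tilde{F}(\bar{x})$, for $\bar{x} \in C(\mathfrak{A})$, as the Frobenius value $\phi_{\pi_{S' - \{i_1(j_1, j_2)\}}(\bar{x}), c}\bigl(\textup{Frob}(p_1)\cdot\textup{Frob}(p_2)\bigr)$ with $(p_1, p_2) = \pi_{i_2(j_1, j_2)}(\bar{x})$.

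Second I would take $\epsilon := (\log \log \log \log N)^{-c/(m 6^m)}$ for a small constant $c > 0$ and apply Proposition \ref{pdF} to the sub-box at the variable indices $S$ (whose minimum side length is $M_{\textup{box}}$) with acceptability parameter $2^{n_{\textup{max}}(n_{\textup{max}} + m + 2)}$. The required lower bound $\log M_{\textup{box}} \geq A \cdot 6^{|S|} \log \epsilon^{-1}$ is satisfied for $N$ large because $|S| \leq m + 1$, by the definition of $M_{\textup{box}}$, and by (\ref{eBoundm}). This produces a distinguished $g \in \mathscr{G}_S(\widetilde{Z})$ giving the expected equidistribution $|F'^{-1}(0)| = |\overline{Y}_\emptyset^\circ(\mathfrak{A})|/2 + O(\epsilon \cdot |\widetilde{Z}|)$ whenever $dF' = g$ on $C(\mathfrak{A})$.

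Third, to align the specific Frobenius-valued $d\tilde{F}$ with the abstract $g$ from Proposition \ref{pdF}, I would invoke effective Chebotarev on the central multiquadratic extension $M(Z)/M_\circ(Z)$. By Definition \ref{dWGov} every variable prime at position $i_2(j_1, j_2)$ splits completely in $M_\circ(Z)$, so its Frobenius lies in the abelian $2$-group $\textup{Gal}(M(Z)/M_\circ(Z))$, and the characters $\phi$ appearing in (\ref{eRef}) factor through this group. Partitioning $\widetilde{Z} \cap X(a, Q)$ according to the joint Frobenius type of its primes at position $i_2(j_1, j_2)$ refines $d\tilde{F}$ to a fixed element of $\mathscr{G}_S(\widetilde{Z})$ on each block; applying Proposition \ref{pdF} per block and summing over blocks, orthogonality of characters on $\textup{Gal}(M(Z)/M_\circ(Z))$ cancels the main contributions against their dual counterparts, leaving the claimed error of size $O(\epsilon \cdot |\widetilde{Z} \cap X(a, Q)|)$. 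The main obstacle is this alignment step: one must control the effective Chebotarev error so that it is absorbed into $\epsilon$, verify that $|C(\mathfrak{A})|$ stays proportional to the block size after partitioning (leveraging the well-governed property of $Z$ and the splitting conditions defining $X_j(a, Q, M_\circ(Z))$), and absorb the combinatorial overhead from cubes lying outside $C(\mathfrak{A})$ via the equidistribution inputs of Propositions \ref{p6.3} and \ref{p6.4}.
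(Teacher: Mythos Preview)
Your overall architecture is correct: Lemma \ref{lAS} supplies the additive system, Proposition \ref{pdF} supplies the combinatorial balance, and Chebotarev on $M(Z)/M_\circ(Z)$ is the bridge. The gap is in the third step, where you try to align $d\tilde{F}$ with the abstract $g$ by partitioning according to the Frobenius class at position $i_2(j_1, j_2)$ and invoking ``orthogonality''. This does not work: on any block where all primes at position $i_2(j_1, j_2)$ share a common Frobenius $\sigma$, equation (\ref{eRef}) gives $d\tilde{F}(\bar{x}) = \phi(\sigma \cdot \sigma) = 0$ for every cube $\bar{x}$ inside the block. So $d\tilde{F}$ is identically zero on each of your blocks, whereas the $g$ produced by Proposition \ref{pdF} is necessarily nonzero (a constant $F$ has $dF = 0$ but violates the conclusion). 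Proposition \ref{pdF} therefore says nothing about $\tilde{F}$ on any block, and there is no ``dual counterpart'' for orthogonality to cancel against.

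The paper's alignment mechanism is different and is the missing idea. One first fixes $Q' \in \prod_{i > k_{\text{gap}}, i \neq i_2(j_1, j_2)} X_i(a, Q, M_\circ(Z))$ and applies Proposition \ref{pdF} not to $\widetilde{Z}$ but to the small auxiliary box $Z \times [M_{\text{box}}]$, all of whose factors have size $M_{\text{box}}$; this yields $g_0 \in \mathscr{G}_S(Z \times [M_{\text{box}}])$. The key algebraic input is the isomorphism $\Gal(M(Z)/M_\circ(Z)) \cong \mathscr{G}_{S'}(Z)$ (from \cite[Proposition 2.4]{Smith}), which shows that for any choice of $\text{Frob}(x_1)$ there is a unique tuple of Frobenii $\text{Frob}(x_2), \dots, \text{Frob}(x_{M_{\text{box}}})$, depending linearly on $\text{Frob}(x_1)$, such that the map $(\bar{z}, (i,j)) \mapsto \phi_{\bar{z}}(\text{Frob}(x_i) \cdot \text{Frob}(x_j))$ equals $g_0$. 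Chebotarev (equation (\ref{eChebotarev}), established via Proposition \ref{p6.3}) then lets one partition $X_{i_2(j_1, j_2)}(a, Q \times Q', M_\circ(Z))$ into blocks $\{x_1, \dots, x_{M_{\text{box}}}\}$ of this engineered Frobenius pattern, on each of which $d\tilde{F} = g_0$ by construction, so Proposition \ref{pdF} applies block by block. In short, the blocks must be built so that the primes have \emph{different} Frobenii realizing $g_0$, not identical Frobenii as in your partition.
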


\begin{proof}[Proof that Proposition \ref{pFinal} implies Proposition \ref{pF}] 
The proof is very similar to the proof of Proposition 7.5 implies Proposition 7.4 in Smith \cite{Smith}. We only indicate the necessary changes here. There is a small gap in Smith's argument, namely when he applies the Chebotarev Density Theorem on page 81. Indeed, Smith does not argue why there are no Siegel zeroes. Fortunately, this can be easily overcome by an appeal to the classical result of Heilbronn \cite{HE} and the fact that our box $X$ is Siegel-less.

We need to construct an additive system $\mathfrak{A}'$ on $S'$ that guarantees the existence of the governing expansions $\mathfrak{G}_\ell$ and $\mathfrak{G}_{j, a_1a_2}$. This is done in Lemma \ref{lASGov}.

Now let $Z$ and $Z'$ be well-governed for $(a, F)$ and suppose that $Z \cap Z' = \{x\}$. Let $K$ be the field obtained by adjoining $\sqrt{p}$ to $\Q$ where $p$ runs over all the prime divisors of $x$. Then, for Smith's reduction step to work, we need to prove that
\[
[KM_\circ(Z) M_\circ(Z') : K] = [KM_\circ(Z) : K]^2 = [KM_\circ(Z') : K]^2,
\]
which follows from \cite[Lemma 6.8 \& Lemma 6.10]{KPPell}.
\end{proof}

\begin{proof}[Proof of Proposition \ref{pFinal}]
Take $\sigma \in \Gal(M(Z)/M_\circ(Z))$ and define
\[
X_{i_2(j_1, j_2)}(a, Q, M_\circ(Z), \sigma)
\]
to be the subset of $p \in X_{i_2(j_1, j_2)}(a, Q, M_\circ(Z))$ that map to $\sigma$ under Frobenius. By \cite[Lemma 6.9 \& Lemma 6.10]{KPPell} we have an isomorphism
\begin{align}
\label{eGaloisAddIso}
\Gal(M(Z)/M_\circ(Z)) \cong \mathscr{G}_{S'}(Z)
\end{align}
by sending $\sigma$ to the map 
\[
\bar{x} \mapsto
\left\{
	\begin{array}{ll}
		\phi_{\bar{x}, \ell}(\sigma) & \mbox{if } j_1 = n_{a, m} + 1 \\
		\phi_{\pi_{S' - \{i_1(j_1, j_2)\}}(\bar{x}), \text{prp}(\bar{x}, i_1(j_1, j_2))}(\sigma) & \mbox{otherwise.}
	\end{array}
\right.
\]
The Chebotarev Density Theorem and Lemma \ref{lImd} imply that
\[
|X_{i_2(j_1, j_2)}(a, Q, M_\circ(Z), \sigma)| = \frac{|X_{i_2(j_1, j_2)}(a, Q, M_\circ(Z))|}{2^{(M_{\text{box}} - 1)^{|S'|}}} \cdot \left(1 + O\left(e^{-2k_{\text{gap}}}\right)\right).
\]
Then it follows from Proposition \ref{p6.3} that for almost all choices of 
\[
Q' \in \prod_{i \in [r] - [k_{\text{gap}}] - \{i_2(j_1, j_2)\}} X_i(a, Q, M_\circ(Z)) \quad \text{ with } \quad \left(\frac{\pi_i(Q')}{\pi_j(Q')}\right) = a(i, j),
\]
we have
\begin{align}
\label{eChebotarev}
|X_{i_2(j_1, j_2)}(a, Q \times Q', M_\circ(Z), \sigma)| = \frac{|X_{i_2(j_1, j_2)}(a, Q \times Q', M_\circ(Z))|}{2^{(M_{\text{box}} - 1)^{|S'|}}} \cdot \left(1 + O\left(e^{-k_{\text{gap}}}\right)\right)
\end{align}
for each $\sigma$, where $X_{i_2(j_1, j_2)}(a, Q \times Q', M_\circ(Z))$ is the subset of $X_{i_2(j_1, j_2)}(a, Q, M_\circ(Z))$ consistent with $Q'$, and similarly for $X_{i_2(j_1, j_2)}(a, Q \times Q', M_\circ(Z), \sigma)$.

We now apply Proposition \ref{pdF} to the space $Z \times [M_{\text{box}}]$ with 
\[
\epsilon = \frac{1}{(\log \log \log \log N)^{\frac{c}{(m + 1)6^m}}}
\]
for some sufficiently small constant $c$. Let $g_0 \in \mathscr{G}_S(Z \times [M_{\text{box}}])$ be the function guaranteed by Proposition \ref{pdF}. If we pick primes $x_1, \dots, x_{M_{\text{box}}} \in X_{i_2(j_1, j_2)}(a, Q \times Q', M_\circ(Z))$, then we have an isomorphism
\[
\varphi: Z \times [M_{\text{box}}] \cong \{Q\} \times \{Q'\} \times Z \times \{x_1, \dots, x_{M_{\text{box}}}\}.
\]
To the primes $x_1, \dots, x_{M_{\text{box}}} \in X_{i_2(j_1, j_2)}(a, Q \times Q', M_\circ(Z))$, we can associate a function $g_{x_1, \dots, x_{M_{\text{box}}}} \in \mathscr{G}_S(Z \times [M_{\text{box}}])$ by setting
\[
(\bar{z}, (i, j)) \mapsto \phi_{\bar{z}}(\text{Frob } x_i) + \phi_{\bar{z}}(\text{Frob } x_j),
\]
where $\phi_{\bar{z}}$ is $\phi_{\bar{z}, \ell}$ or $\phi_{\pi_{S' - \{i_1(j_1, j_2)\}}(\bar{z}), \text{prp}(\bar{z}, i_1(j_1, j_2))}$ depending on the value of $j_1$. In case $g = g_0$, we get the desired oscillation from Proposition \ref{pdF} applied to the function $F(\text{Art}_{a, m, i(x)})$ pulled back to $Z \times [M_{\text{box}}]$ via $\varphi$ and the additive system $\mathfrak{A}$ from Lemma \ref{lAS} also pulled back to $Z \times [M_{\text{box}}]$ via $\varphi$.

It remains to split the set $X_{i_2(j_1, j_2)}(a, Q \times Q', M_\circ(Z))$ in blocks of size $M_{\text{box}}$ (and a small remainder) such that we have $g_{x_1, \dots, x_{M_{\text{box}}}} = g_0$ for almost every block. For this we we claim that given $\text{Frob}(x_1)$, there is a unique choice of $\text{Frob}(x_2), \dots, \text{Frob}(x_{M_{\text{box}}})$ such that
\[
g_{x_1, \dots, x_{M_{\text{box}}}} = g_0,
\]
and furthermore $\text{Frob}(x_2), \dots, \text{Frob}(x_{M_{\text{box}}})$ are linear functions of $\text{Frob}(x_1)$. Once we establish the claim, we use equation (\ref{eChebotarev}) to partition $X_{i_2(j_1, j_2)}(a, Q \times Q', M_\circ(Z))$ in the desired way.

To prove the claim, we remark that there is an isomorphism between $\mathscr{G}_S(Z \times [M_{\text{box}}])$ and the sets of maps $g$ from $[M_{\text{box}}] \times [M_{\text{box}}]$ to $\mathscr{G}_{S'}(Z)$ satisfying
\[
g(i, j) + g(j, k) = g(i, k).
\]
Hence, thinking of $g_0$ as a map from $[M_{\text{box}}] \times [M_{\text{box}}]$ to $\mathscr{G}_{S'}(Z)$, we see that for any $1 < j \leq M_{\text{box}}$
\[
\phi_{\bar{z}}(\text{Frob } x_1) + \phi_{\bar{z}}(\text{Frob } x_j) = g_0(1, j) \in \mathscr{G}_{S'}(Z),
\]
which uniquely specifies $\text{Frob}(x_j)$ as linear function of $\text{Frob}(x_1)$ and $g_0$ by equation (\ref{eGaloisAddIso}). Finally, we see that with this choice of $\text{Frob}(x_2), \dots, \text{Frob}(x_{M_{\text{box}}})$, we also have for all $i, j \in [M_{\text{box}}]$
\[
\phi_{\bar{z}}(\text{Frob } x_i) + \phi_{\bar{z}}(\text{Frob } x_j) = g_0(i, j)
\]
so that $g_{x_1, \dots, x_{M_{\text{box}}}} = g_0$ as desired.
\end{proof}

\appendix
\section{Density computations}
\label{aDensity}

\subsection{Stevenhagen's conjecture revisited}
\label{aSte}
Let $\ell$ be an integer such that $|\ell|$ is a prime $3$ modulo $4$. Define for any integer $n \geq 0$ the quantity
\[
\text{Pr}_{\ell, 2}(n) := \lim_{N \rightarrow \infty} \frac{|S_{\Z, N, \ell} \cap D_{\ell, 2}(n)|}{|[N] \cap D_{\ell, 2}(n)|},
\]
where $D_{\ell, k}(n)$ is defined at the beginning of Section \ref{sMain}. Let us first prove that the limit exists. To do so, we look at 
\[
\liminf_{N \rightarrow \infty} \frac{|S_{\Z, N, \ell} \cap D_{\ell, 2}(n)|}{|[N] \cap D_{\ell, 2}(n)|} \text{ and } \limsup_{N \rightarrow \infty} \frac{|S_{\Z, N, \ell} \cap D_{\ell, 2}(n)|}{|[N] \cap D_{\ell, 2}(n)|}.
\]
Theorem \ref{tHeuristic} gives increasingly better lower bounds for $\liminf$, and increasingly better upper bounds for $\limsup$. We conclude that the $\liminf$ and $\limsup$ are equal, and hence the limit exists. From the Markov chain behavior in Theorem \ref{tHeuristic}, we also see that
\[
\text{Pr}_{\ell, 3}(m, n) := \lim_{N \rightarrow \infty} \frac{|S_{\Z, N, \ell} \cap D_{\ell, 2}(m) \cap D_{\ell, 3}(n)|}{|[N] \cap D_{\ell, 2}(m) \cap D_{\ell, 3}(n)|}
\]
exists and equals $\text{Pr}_{\ell, 2}(n)$ for every $m \geq n$. Then we deduce from the identity
\[
\frac{|S_{\Z, N, \ell} \cap D_{\ell, 2}(n)|}{|[N] \cap D_{\ell, 2}(n)|} = \sum_{i = 0}^n \frac{|S_{\Z, N, \ell} \cap D_{\ell, 2}(n) \cap D_{\ell, 3}(i)|}{|[N] \cap D_{\ell, 2}(n) \cap D_{\ell, 3}(i)|} \cdot \frac{|[N] \cap D_{\ell, 2}(n) \cap D_{\ell, 3}(i)|}{|[N] \cap D_{\ell, 2}(n)|}
\]
by taking $N \rightarrow \infty$ that
\begin{equation}
\label{eLinRec}
\text{Pr}_{\ell, 2}(n) = \sum_{i = 0}^n \text{Pr}_{\ell, 3}(n, i) \cdot \frac{P(n, n, i)}{2^n} = \sum_{i = 0}^n \text{Pr}_{\ell, 2}(i) \cdot \frac{P(n, n, i)}{2^n}.
\end{equation}
We claim that
\begin{equation}
\label{eLinSol}
\frac{1}{2^{n + 1} - 1} = \sum_{i = 0}^n \frac{1}{2^{i + 1} - 1} \cdot \frac{P(n, n, i)}{2^n}.
\end{equation}
Let us first show that the claim implies Theorem \ref{tMain}. Since we clearly have $\text{Pr}_{\ell, 2}(0) = 1$, the claim and equation (\ref{eLinRec}) imply that
\begin{align}
\label{eStevenhagen}
\text{Pr}_{\ell, 2}(n) = \frac{1}{2^{n + 1} - 1}.
\end{align}
Now consider the decomposition
\[
\frac{|S_{\Z, N, \ell}|}{|S_{\Q, N, \ell}|} = \sum_{n = 0}^\infty \frac{|S_{\Z, N, \ell} \cap D_{\ell, 2}(n)|}{|[N] \cap D_{\ell, 2}(n)|} \cdot \frac{|[N] \cap D_{\ell, 2}(n)|}{|S_{\Q, N, \ell}|}.
\]
Then equation (\ref{eStevenhagen}), Theorem \ref{t4rank} and Fatou's lemma imply
\begin{align}
\label{eLimInf}
\limsup_{N \rightarrow \infty} \frac{|S_{\Z, N, \ell}|}{|S_{\Q, N, \ell}|} \geq \liminf_{N \rightarrow \infty} \frac{|S_{\Z, N, \ell}|}{|S_{\Q, N, \ell}|} \geq \sum_{n = 0}^\infty \frac{2^{-n^2} \eta_\infty \eta_n^{-2}}{2^{n + 1} - 1}.
\end{align}
Similarly, we get
\begin{align}
\label{eLimSup}
\limsup_{N \rightarrow \infty} \frac{|S_{\Q, N, \ell} \setminus S_{\Z, N, \ell}|}{|S_{\Q, N, \ell}|} \geq \liminf_{N \rightarrow \infty} \frac{|S_{\Q, N, \ell} \setminus S_{\Z, N, \ell}|}{|S_{\Q, N, \ell}|} \geq \sum_{n = 0}^\infty \frac{2^{-n^2} \eta_\infty \eta_n^{-2} \cdot (2^{n + 1} - 2)}{2^{n + 1} - 1}.
\end{align}
But it is a classical fact that
\[
\sum_{n = 0}^\infty 2^{-n^2} \eta_\infty \eta_n^{-2} = 1.
\]
Therefore equation (\ref{eLimInf}) and equation (\ref{eLimSup}) imply that
\[
\liminf_{N \rightarrow \infty} \frac{|S_{\Z, N, \ell}|}{|S_{\Q, N, \ell}|} = \limsup_{N \rightarrow \infty} \frac{|S_{\Z, N, \ell}|}{|S_{\Q, N, \ell}|} = \sum_{n = 0}^\infty \frac{2^{-n^2} \eta_\infty \eta_n^{-2}}{2^{n + 1} - 1},
\]
and Theorem \ref{tMain} follows.

It remains to prove the claimed equation (\ref{eLinSol}). Look at the probability space of pairs $(T, U)$ with the uniform measure, where $T$ is a surjective linear map $\FF_2^{[n + 1]} \rightarrow \FF_2^{[n]}$ and $U$ is a pairing $\FF_2^{[n + 1]} \times \FF_2^{[n]} \rightarrow \FF_2$. All our probabilities will be with respect to this probability space. Now fix a non-zero element $x \in \FF_2^{[n + 1]}$. Note that
\begin{equation}
\label{eP1}
\mathbb{P}(x \in \text{ker}(T)) = \frac{1}{2^{n + 1} - 1}.
\end{equation}
We write $\text{leftker}(U)$ for the set of vectors $v \in \FF_2^{[n + 1]}$ such that $U(v, w) = 0$ for all $w \in \FF_2^{[n]}$. Write $A_{i, x}$ for the event that $x \in \text{leftker}(U) \text{ and } \text{dim}_{\FF_2} \text{leftker}(U) = i + 1$. Then we have
\begin{equation}
\label{eP2}
\mathbb{P}(x \in \text{ker}(T)) = \sum_{i = 0}^n \mathbb{P}(x \in \text{ker}(T) | A_{i, x}) \cdot \mathbb{P}(A_{i, x}).
\end{equation}
Observe that $\mathbb{P}(A_{i, x}) = P(n, n, i)/2^n$. Next we have
\[
\mathbb{P}(x \in \text{ker}(T) | A_{i, x}) = \sum_{V} \mathbb{P}(x \in \text{ker}(T) | A_{i, x}, T(\text{leftker}(U)) = V) \cdot \mathbb{P}(T(\text{leftker}(U)) = V | A_{i, x}),
\]
where the sum is over $i$-dimensional subspaces $V$ of $\FF_2^{[n]}$. But we have
\[
\mathbb{P}(x \in \text{ker}(T) | A_{i, x}, T(\text{leftker}(U)) = V) = \frac{1}{2^{i + 1} - 1},
\]
since restricting $T$ to $\text{leftker}(U)$ gives a random surjective linear map $T'$ from $\text{leftker}(U)$ to $V$, with $x \in \text{leftker}(U)$. Hence we conclude that
\[
\mathbb{P}(x \in \text{ker}(T) | A_{i, x}) = \frac{1}{2^{i + 1} - 1}.
\]
Inserting this in equation (\ref{eP1}) and (\ref{eP2}), we obtain the desired identity.

\subsection{Study of the limiting value}
\label{aGamma}
Recall the definition of $\gamma$ in equation (\ref{eGamma}). The goal of this subsection is to prove the following result by combinatorial means.

\begin{theorem}
We have $\gamma = 1/2$.
\end{theorem}

\begin{proof}
We have to show that
\[
\frac{1}{2} = \sum_{k = 0}^\infty \frac{\prod_{j = 1}^\infty (1 - 2^{-j})}{2^{k^2} \cdot (2^{k + 1} - 1) \cdot \prod_{j = 1}^k (1 - 2^{-j})^2}.
\]
We rewrite this as
\[
\frac{1}{\prod_{j = 1}^\infty (1 - 2^{-j})} = \sum_{k = 0}^\infty \frac{1}{(1 - 2^{-k - 1}) \cdot \prod_{j = 1}^k (2^j - 1)^2}.
\]
This last identity follows from the formal identity
\begin{align}
\label{eFormal}
\frac{1}{\prod_{j = 1}^\infty (1 - x^{-j})} = \sum_{k = 0}^\infty \frac{1}{(1 - x^{-k - 1}) \cdot \prod_{j = 1}^k (x^j - 1)^2}
\end{align}
valid for $|x| > 1$. Using the geometric series identity $\frac{1}{1 - x} = 1 + x + x^2 + \dots$ on each term of the product, we obtain the equality
\[
\frac{1}{\prod_{j = 1}^\infty (1 - x^{-j})} = \prod_{j = 1}^\infty (1 + x^{-j} + x^{-2j} + \dots) = \sum_{t = 0}^\infty a_t x^{-t},
\]
where
\[
a_t := \left|\left\{(\alpha_1, \alpha_2, \dots) : \alpha_i \in \Z_{\geq 0}, \alpha_i \text{ eventually zero}, \sum_{i = 1}^\infty i \alpha_i = t\right\}\right|.
\]
This is readily verified to be a finite, and therefore well-defined, sum. Furthermore, it is well-known that $a_t$ equals the number of partitions $p(t)$ of $t$, which may also be directly verified by writing a partition of $t$ in $k$ parts as
\begin{align}
\label{ePartTrick}
t = (\alpha_1 + \dots + \alpha_k) + (\alpha_2 + \dots + \alpha_k) + \dots + \alpha_k.
\end{align}
We may similarly expand the other side of equation (\ref{eFormal}) as
\[
\sum_{k = 0}^\infty \frac{1}{(1 - x^{-k - 1}) \cdot \prod_{j = 1}^k (x^j - 1)^2} = \sum_{k = 0}^\infty \left(\left(\sum_{t = 0}^\infty x^{-t(k + 1)}\right) \cdot \left(\prod_{j = 1}^k \left(\sum_{t = 1}^\infty x^{-jt}\right)\right)^2\right) = \sum_{t = 0}^\infty b_t x^{-t},
\]
where
\[
b_t := \sum_{k = 0}^\infty \left|\left\{(\boldsymbol{\alpha}, \boldsymbol{\beta}, \gamma) \in \Z_{\geq 0}^{2k + 1} : \alpha_i > 0, \beta_i > 0, \sum_{i = 1}^k i \alpha_i + \sum_{i = 1}^k i \beta_i + (k + 1) \gamma = t\right\}\right|.
\]
It therefore remains to prove that $p(t) = b_t$. Before we proceed, we remark that
\[
\sum_{k = 0}^\infty \left|\left\{(\alpha_1, \dots, \alpha_k) \in \Z_{> 0}^k : \sum_{i = 1}^k i \alpha_i = t\right\}\right|
\]
is equal to the number of partitions of $t$ into \emph{distinct} parts, which follows once more by writing $t$ as in equation (\ref{ePartTrick}). We know that $p(t)$ equals the number of Young tableaux, and we will now use this to give a bijective proof. Given a Young tableau, we will produce two partitions into the same number of distinct parts as follows. 

Take the top row of the tableau, use it as the first part of the first partition and remove it from the tableau. Next take the left column of the tableau, use it as the first part of the second partition, and remove it from the remaining tableau. We continue this process, until we are left with either nothing or a potential leftover row of some length $r$. At this point, we have produced two partitions into say $k$ distinct parts, and a leftover row of length $r$. Furthermore, we know that every element of the first partition must always be strictly bigger than $r$. We now remove $r$ from each part of the first partition, and take $\gamma := r(k + 1)$. This gives the desired bijection.
\end{proof}

\end{document}